\theoremstyle{definition}
\newtheorem{defn}{Definition}[section]
\theoremstyle{plain}
\newtheorem{thm}[defn]{Theorem}
\newtheorem{lem}[defn]{Lemma}
\newtheorem{prop}[defn]{Proposition}
\newtheorem{cor}[defn]{Corollary}
\newtheorem{conj}[defn]{Conjecture}
\def\C{\ensuremath{\mathbb{C}}}
\def\P{\ensuremath{\mathbb{P}}}
\def\R{\ensuremath{\mathbb{R}}}
\def\Z{\ensuremath{\mathbb{Z}}}
\def\AA{\ensuremath{\mathcal A}}
\def\FF{\ensuremath{\mathcal F}}
\def\HH{\ensuremath{\mathcal H}}
\def\II{\ensuremath{\mathcal I}}
\def\OO{\ensuremath{\mathcal O}}
\def\QQ{\ensuremath{\mathcal Q}}
\def\TT{\ensuremath{\mathcal T}}
\def\UU{\ensuremath{\mathcal U}}
\def\VV{\ensuremath{\mathcal V}}
\def\WW{\ensuremath{\mathcal W}}
\def\ch{\mathop{\mathrm{ch}}\nolimits}
\def\Coh{\mathop{\mathrm{Coh}}\nolimits}
\def\Db{\mathop{\mathrm{D}^{\mathrm{b}}}\nolimits}
\def\deg{\mathop{\mathrm{deg}}\nolimits}
\def\dim{\mathop{\mathrm{dim}}\nolimits}
\def\ext{\mathop{\mathrm{ext}}\nolimits}
\def\Ext{\mathop{\mathrm{Ext}}\nolimits}
\def\lExt{\mathop{\mathcal Ext}\nolimits}
\def\GL{\mathop{\mathrm{GL}}\nolimits}
\def\Gr{\mathop{\mathrm{Gr}}\nolimits}
\def\Hom{\mathop{\mathrm{Hom}}\nolimits}
\def\lHom{\mathop{\mathcal Hom}\nolimits}
\def\RlHom{\mathop{\mathbf{R}\mathcal Hom}\nolimits}
\def\RHom{\mathop{\mathbf{R}\mathrm{Hom}}\nolimits}
\def\id{\mathop{\mathrm{id}}\nolimits}
\def\tilt{\mathop{\mathrm{tilt}}}
\def\td{\mathop{\mathrm{td}}\nolimits}
\def\into{\ensuremath{\hookrightarrow}}
\def\onto{\ensuremath{\twoheadrightarrow}}
\begin{document}

\title{Rank two sheaves with maximal third Chern character in three-dimensional projective space}

\author{Benjamin Schmidt}
\address{The University of Texas at Austin, Department of Mathematics, 2515 Speedway, RLM 8.100, Austin, TX 78712, USA}
\email{schmidt@math.utexas.edu}
\urladdr{https://sites.google.com/site/benjaminschmidtmath/}

\keywords{Stable sheaves, Stability conditions, Derived categories}

\subjclass[2010]{14J60 (Primary); 14D20, 14F05 (Secondary)}

\begin{abstract}
We give a complete classification of semistable rank two sheaves on three-dimensional projective space with maximal third Chern character. This implies an explicit description of their moduli spaces. As an open subset they contain rank two reflexive sheaves with maximal number of singularities. These spaces are irreducible, and apart from a single special case, they are also smooth. This extends a result by Okonek and Spindler to all missing cases and gives a new proof of their result. The key technical ingredient is variation of stability in the derived category.
\end{abstract}

\maketitle

\tableofcontents

\section{Introduction}

Moduli spaces of sheaves are well known to be badly behaved. In \cite{Mum62:pathologies} Mumford described a generically non-reduced irreducible component of the Hilbert scheme of curves in $\P^3$ whose general point parametrizes a smooth curve. The fact that well-behaved geometric objects could have such a disastrous moduli space was a shocking result. In \cite{Vak06:murphys_law} this was vastly generalized. Vakil showed that many classes of moduli spaces satisfy Murphy's law in algebraic geometry. This means every possible singularity can occur on them. The moral of these results is that moduli space are problematic, unless there is a good reason to believe otherwise.

In this article, we deal with moduli spaces of rank two sheaves in $\P^3$ whose third Chern character is maximal. They defy the general principle. Except for one case they turn out to be smooth and irreducible. We denote the moduli space of Gieseker-semistable sheaves $E \in \Coh(\P^3)$ with Chern character $\ch(E) = v$ as $M(v)$.

\begin{thm}
\label{thm:main}
Let $E \in \Coh(\P^3)$ be a Gieseker-semistable rank two object with $\ch(E) = (2,c,d,e)$.
\begin{enumerate}
    \item If $c = -1$, then $d \leq -\tfrac{1}{2}$.
    \begin{enumerate}
        \item If $d = -\tfrac{1}{2}$, then $e \leq \tfrac{5}{6}$. Moreover, $M(2,-1,-\tfrac{1}{2},\tfrac{5}{6}) \cong \P^3$.
        \item If $d \leq -\tfrac{3}{2}$, then $e \leq \tfrac{d^2}{2} - d + \tfrac{5}{24}$. Moreover, there is a locally trivial fibration $M(2,-1,d,\tfrac{d^2}{2} - d + \tfrac{5}{24}) \to \P^3$, where the fiber is the Grassmannian $\Gr(2, n)$ for 
        \[
        n = \binom{\frac{5}{2} - d}{2}. 
        \]
    \end{enumerate}
    \item If $c = 0$, then $d \leq 0$.
    \begin{enumerate}
        \item If $d = 0$, then $e \leq 0$. In case of equality, $E \cong \OO^{\oplus 2}$.
        \item If $d = -1$, then $e \leq 0$. Moreover, $M(2,0,-1,0) \cong \P^5$.
        \item If $d = -2$, then $e \leq 2$.
        \item If $d = -3$, then $e \leq 4$. The moduli space $M(2,0,-3,4)$ is the blow up of $\Gr(3,10)$ in a smooth subvariety isomorphic to $\P^3 \times \P^3$.
        \item If $d \leq -4$, then $e \leq \tfrac{d^2}{2} + \tfrac{d}{2} + 1$. Moreover, the moduli space $M(2,0,d,\tfrac{d^2}{2} + \tfrac{d}{2} + 1)$ is a $\P^n$-bundle over $\P^3 \times \P^3$, where $n = d(d-2) - 1$.
    \end{enumerate}
\end{enumerate}
\end{thm}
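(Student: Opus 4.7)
The plan is to deduce the numerical bounds on $e$ from the generalized Bogomolov--Gieseker type inequality for tilt-stability on $\Db(\P^3)$, and then to classify the extremal Gieseker-semistable objects by wall-crossing in a one-parameter family of stability conditions.

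First, for each admissible $(c,d)$ I would observe that a Gieseker-semistable sheaf $E$ with $\ch(E) = (2,c,d,e)$ is tilt-semistable in the large-volume chamber. The stated inequalities on $e$ should fall out of the sharp $\ch_3$-inequality, applied either to $E$ itself or to a destabilizing factor at the last numerical wall. The parabolic shape of the bounds in $d$ (in cases (1b), (2e)) is the hallmark of such inequalities becoming tight; equality selects a unique candidate wall $W$ and forces a prescribed Jordan--H\"older structure for $E$ in the relevant tilted heart $\Coh^{\beta}$.

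Second, when $e$ attains the bound, I would argue that $E$ must be an extension of explicit ``building blocks'' on $W$. Natural candidates are twists of $\OO_{\P^3}$, ideal sheaves of points or lines, and their shifts. In case (1a) the point of $\P^3$ records the unique singularity of the reflexive sheaf; in (1b) the same $\P^3$ appears as the moduli of one extremal factor, while the additional freedom—the Grassmannian fiber $\Gr(2,n)$—comes from choosing a $2$-dimensional subspace inside an extension group whose dimension is precisely $\binom{\frac{5}{2}-d}{2}$. In (2e), the base $\P^3 \times \P^3$ parametrizes two auxiliary points and the projective fiber records the extension class; (2b) then reduces to the classical description of null-correlation--type sheaves.

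Third, once the extremal objects have been realized as extensions of rigid building blocks over a universal base, standard relative Grassmannian and projective bundle constructions produce the claimed moduli descriptions. Verifying that these extensions are actually Gieseker-semistable, rather than merely tilt-semistable, requires ruling out subsheaves that destabilize with respect to the original polarization; this is where the fine control from tilt-stability is essential. The main obstacle will be case (2d): here the moduli space is a \emph{blow-up} rather than a locally trivial fibration, which I expect to reflect the presence of two adjacent tilt-stability chambers yielding two competing families of semistable objects that coincide only along a proper subvariety. Isolating these two chambers, computing the birational transformation induced by wall-crossing between them, and identifying the exceptional locus with $\P^3 \times \P^3$ inside the blow-up of $\Gr(3,10)$ is the delicate step; I expect it to require a careful count of $\Hom$ and $\Ext^1$ between the relevant JH factors along both sides of the wall.
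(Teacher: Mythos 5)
Your plan follows essentially the same route as the paper: tilt stability plus Macr\`i's $\ch_3$-inequality to force a unique destabilizing wall at the extremal value of $e$, classification of the Jordan--H\"older factors there (line bundles, their shifts, torsion sheaves $\OO_V(k)$ supported on planes, and the extremal rank-two sheaves from case (1a)), relative Grassmannian and projective-bundle constructions over the base parametrizing those factors, and, for $(2,0,-3,4)$, a two-chamber wall-crossing identified as the blow-up of $\Gr(3,10)$ along $\P^3 \times \P^3$. The only notable divergences are minor: the paper proves $M(2,0,-1,0) \cong \P^5$ via the quiver heart coming from Macr\`i's exceptional collection rather than the classical null-correlation picture, and the technical content your sketch elides --- the induction on $\Delta$, the rank bounds on destabilizing subobjects, and the failure of uniqueness of Jordan--H\"older factors in tilt stability --- is where most of the paper's actual work lies.
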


It turns out that in the case $\ch(E) = (2,0,-2,2)$ there are strictly semistable sheaves preventing the moduli space from being smooth. This case had already been deeply analyzed in \cite{MT94:moduli_sheaves_p3}. A special case of a theorem by Hartshorne in \cite{Har88:stable_reflexive_3} proves these bounds with the extra assumption that $E$ is reflexive. In \cite{OS85:spectrum_torsion_free_sheavesII} Okonek and Spindler proved the same bounds for all semistable sheaves of rank two. Moreover, they described the moduli space if either $c = 0$ and $d \leq -6$ or  $c = -1$ and $d \leq -\tfrac{11}{2}$. The above theorem fills in all the remaining special cases. Our proof is completely independent of these previous results.

Assume that $E$ is reflexive. Then by \cite[Proposition 2.6]{Har80:reflexive_sheaves} having maximal third Chern character can be interpreted as $E$ having the maximal possible number of singularities, i.e., points where it fails to be a vector bundle. Curiously, making $E$ less close to a vector bundle leads to a nice moduli space. Other descriptions of components of moduli spaces of semistable rank two sheaves in $\P^3$ have been found in examples such as \cite{AJT18:components_rank_two_p3, AJTT17:new_moduli, Cha83:stable_rank_two, JMT17:instanton_sheaves, JMT17:infinite_series_rank_two, Man81:rank_two_p3}. So called instanton bundles satisfy additional cohomology vanishings. A long standing conjecture that these bundles represent smooth points in their moduli spaces was settled in \cite{JV14:instanton}. However, all these cases are different, since their closures in the moduli space of all semistable sheaves are well known to contain many singularities.

The situation is similar to the Hilbert scheme of plane curves of degree $d$ in $\P^3$. Interpreted as the moduli space of its ideal sheaves, they also maximize the third Chern character. The rank one and two examples lead us to make the following slightly adventurous conjecture.

\begin{conj}
\label{conj:smooth_moduli}
Let $r \in \Z_{\geq 0}$, $c \in \Z$, $d \in \tfrac{1}{2} \Z$, $e \in \tfrac{1}{6} \Z$ such that $M(r,c,d,e)$ is non-empty, but $M(r,c,d,e') = \emptyset$ for all $e' > e$. If all Gieseker-semistable sheaves with Chern character $(r,c,d,e)$ are Gieseker-stable, then $M(r,c,d,e)$ is smooth and irreducible.
\end{conj}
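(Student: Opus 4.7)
The plan is to extend the variation-of-stability argument that underlies Theorem \ref{thm:main} to arbitrary rank, working with tilt stability on $\Db(\P^3)$ in the sense of Bayer--Macr\`i--Toda. The guiding principle is that maximality of $e$ for fixed $(r,c,d)$ should force every Gieseker-semistable $E$ onto the numerically largest tilt wall compatible with $(r,c,d)$, and that at this outermost wall $E$ fits into an exact triangle whose pieces $A,B$ are rigid tilt-stable building blocks determined up to finite data by $(r,c,d)$.

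First I would enumerate the finite list of numerical walls for $\ch(E)=(r,c,d,e)$, using the classical Bogomolov inequality together with the generalized Bogomolov inequality of Bayer--Macr\`i--Stellari on the third Chern character. The outermost wall produces, via Hirzebruch--Riemann--Roch on the pieces, the actual upper bound on $e$, so saturating the bound pins $E$ on that wall. Identifying the tilt-stable building blocks $A$ and $B$ in a destabilizing triangle $A\to E\to B$ is then a finite combinatorial task; Theorem \ref{thm:main} shows that in rank two these turn out to be shifts of line bundles and of ideal sheaves of points or linear subschemes.

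Given the hypothesis that every Gieseker-semistable sheaf with these invariants is already Gieseker-stable, no such extension is strictly semistable, so $M(r,c,d,e)$ is identified with an open subset of the moduli of non-split extensions of $B$ by $A$. This presents it as a projective-bundle or Grassmann-bundle over the product of moduli of the building blocks, which gives irreducibility at once. For smoothness one must verify $\Ext^2(E,E)_0=0$; Serre duality on $\P^3$ rewrites this as $\Ext^1(E,E(-4))_0^{\vee}$, and feeding $0\to A\to E\to B\to 0$ into the long exact sequences for $\Hom(-,E(-4))$ and $\Hom(E,-\otimes \OO(-4))$ reduces the computation to the Ext-groups among the four rigid objects $A$, $B$, $A(-4)$, $B(-4)$, all of which are explicit for the expected building blocks.

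The main obstacle is the very first step once $r\geq 3$: showing that only the extremal wall contributes. In rank two the list of numerical walls is short and can be processed by hand, but in higher rank the combinatorics of potentially destabilizing sub-objects grows rapidly, and a systematic method is needed to exclude interior walls using only $(r,c,d,e)$ and the maximality hypothesis. A secondary subtlety is that the stability hypothesis must be used actively in the cohomological step rather than merely to avoid stacky issues: the case $\ch(E)=(2,0,-2,2)$ in Theorem \ref{thm:main} shows that properly semistable extensions can genuinely destroy both smoothness and the clean bundle description, so their exclusion has to be woven into the obstruction calculation.
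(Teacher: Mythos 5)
The statement you are trying to prove is Conjecture \ref{conj:smooth_moduli}: the paper offers no proof of it, explicitly calls it ``slightly adventurous'', and supports it only by the rank-one and rank-two evidence. So there is nothing in the paper to compare your argument against, and your text should be judged as a proof on its own terms --- and as such it is a strategy outline with several essential gaps, the largest of which you yourself concede (excluding interior walls for $r\geq 3$). Beyond that admitted obstacle, the central premise is already contradicted by the rank-two cases the paper does treat. The destabilizing factors are \emph{not} rigid objects determined by $(r,c,d)$ up to finite data: in the extremal rank-two cases they are $\OO_V(d-\tfrac12)$ for a varying plane $V$ and objects $F$ moving in $M(2,-1,-\tfrac12,\tfrac56)\cong\P^3$, and it is precisely these positive-dimensional families that form the base of the bundle structure. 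The case $(2,0,-3,4)$ shows that more than one wall can contribute and that the answer can be a blow-up rather than a bundle, so ``the outermost wall pins down $E$'' is false even in rank two. Moreover, identifying $M$ with a space of extensions requires uniqueness of the destabilizing sequence; since Jordan--H\"older factors are \emph{not} unique in tilt stability, the paper has to establish the JH-property wall by wall (Theorem \ref{thm:sequences_maximal_ch3}), and you give no mechanism for doing this in general.

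Two further points. First, the conjecture allows $r=0$ and $r=1$ and, more importantly, classes with $\Delta=0$ such as $m\ch(\OO(n))$ with $m=1$, for which the object is never destabilized at any semicircular wall (Theorem \ref{thm:Bertram}(vii)); your wall-crossing strategy does not even begin there, so those cases need a separate argument. Second, the smoothness step is not a formality: $\Ext^2(E,E)=0$ holds in the paper only because of very specific vanishings among the particular factors appearing (Lemmas \ref{lem:ext_special_case_d3} and \ref{lem:c0_general_ext}), and there is no a priori reason such vanishing persists for the factors arising in higher rank. In short, what you have written is a plausible research program consistent with how Theorem \ref{thm:main} is proved, but each of its steps is an open problem rather than an established reduction, and the statement remains a conjecture.
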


\subsection{Ingredients}

Our proof is fundamentally different than what was done before in \cite{Har88:stable_reflexive_3} and \cite{OS85:spectrum_torsion_free_sheavesII}. We use the notion of tilt stability (see Section \ref{sec:prelim}) in the derived category due to \cite{Bri08:stability_k3, AB13:k_trivial, BMT14:stability_threefolds}. It generalizes the notion of slope stability by varying the abelian category from coherent sheaves to certain categories of two-term complexes $\Coh^{\beta}(\P^3) \subset \Db(\P^3)$ dependent on a real parameter $\beta$. A new slope function $\nu_{\alpha, \beta}$ depends on another positive real number $\alpha$. Varying $\alpha, \beta$ varies the set of semistable objects. The key is that for $\alpha \gg 0$ and $\beta \ll 0$ all Gieseker-semistable sheaves are $\nu_{\alpha, \beta}$-semistable. In the upper half-plane parametrized by $\alpha > 0$ and $\beta \in \R$, there is a locally-finite wall and chamber structure such that the set of semistable objects is constant within each chamber.

In \cite{Mac14:conjecture_p3} Macr\`i proves an inequality for the Chern characters of tilt-semistable objects (see Theorem \ref{thm:p3_conjecture}). This inequality can be used to show that if $\ch_3(E)$ is larger than or equal to the claimed bound, then $E$ has to be destabilized somewhere in tilt stability. If $\ch_3(E)$ is strictly larger, then we get a contradiction by showing that there is no wall by mostly numerical arguments. In case of equality, we show that there is a unique wall unless $\ch_1(E) = 0$ and $\ch_2(E) = 3$. This unique wall leads to a classification of all semistable sheaves, and the description of the moduli spaces follows. The special case uses techniques close to what was done for some Hilbert schemes of curves in \cite{Sch15:stability_threefolds} and \cite{GHS16:elliptic_quartics}.

\subsection*{Acknowledgments}
I would like to thank Tom Bridgeland, Izzet Coskun, Sean Keel, Marcos Jardim, Emanuele Macr\`i, Ciaran Meachan, and Benjamin Sung  for very useful discussions. This work has been supported by an AMS-Simons Travel Grant.

\subsection*{Notation}

\begin{center}
   \begin{tabular}{ r l }
     $\Db(\P^3)$ & bounded derived category of coherent sheaves on $\P^3$ over $\C$ \\
     $\HH^{i}(E)$ & the $i$-th cohomology group of a complex $E \in \Db(\P^3)$ \\
     $H^i(E)$ & the $i$-th sheaf cohomology group of a complex $E \in \Db(\P^3)$ \\
     $\ch(E)$ & Chern character of an object $E \in \Db(\P^3)$  \\
     $\ch_{\leq l}(E)$ & $(\ch_0(E), \ldots, \ch_l(E))$
   \end{tabular}
\end{center}

\section{Preliminaries}
\label{sec:prelim}

In this section, we will recall several notions of stability in the bounded derived category of coherent sheaves and its basic properties. By abuse of notation, we write $\ch_i(E)$ for $E \in \Db(\P^3)$ but mean its intersection with $3-i$ powers of the hyperplane class.

\subsection{Stability for sheaves}

\begin{defn}
\begin{enumerate}
\item The classical \emph{slope} for a coherent sheaf $E \in \Coh(\P^3)$ is defined as
\[
\mu(E) := \frac{\ch_1(E)}{\ch_0(E)},
\]
where division by zero is interpreted as $+\infty$.
\item A coherent sheaf $E$ is called \emph{slope-(semi)stable} if for any non-trivial proper subsheaf $F \into E$ the inequality $\mu(F) < (\leq) \mu(E/F)$ holds.
\end{enumerate}
\end{defn}

In many cases, this notion is not quite refined enough.

\begin{defn}
Let $f, g \in \R[m]$ be two polynomials.
\begin{enumerate}
\item If $\deg(f) < \deg(g)$, then $f > g$. Vice versa, $\deg(g) < \deg(f)$ implies $g > f$.
\item Assume $d = \deg(f) = \deg(g)$, and let $a$, $b$ be the coefficients of $m^d$ in $f$, $g$. Then we define $f < (\leq) g$ if $\tfrac{f(m)}{a} < (\leq) \tfrac{g(m)}{b}$ for all $m \gg 0$. 
\end{enumerate}
\end{defn}

\begin{defn}
\begin{enumerate}
\item For any $E \in \Coh(\P^3)$ we can define numbers $\alpha_i(E)$ for $i \in \{0, 1, 2, 3\}$ via the Hilbert polynomial
\[
P(E, m) := \chi(E(m)) = \alpha_3(E) m^3 + \alpha_2(E) m^2 + \alpha_1(E) m + \alpha_0(E).
\]
Moreover, we set
\[
P_2(E, m) := \alpha_3(E) m^2 + \alpha_2(E) m + \alpha_1(E).
\]
\item A sheaf $E \in \Coh(\P^3)$ is called \emph{Gieseker-(semi)stable} if for any non-trivial proper subsheaf $F \into E$ the inequality $P(F, m) < (\leq) P(E/Fdocument, m)$ holds.
\item A sheaf $E \in \Coh(\P^3)$ is called \emph{$2$-Gieseker-(semi)stable} if for any non-trivial proper subsheaf $F \into E$ the inequality $P_2(F, m) < (\leq) P_2(E/F, m)$ holds.
\end{enumerate}
\end{defn}

Gieseker stability was introduced by Gieseker for torsion-free sheaves and later generalized to torsion sheaves by Simpson. The notion of $2$-Gieseker stability is less known, but it is in fact the precise notion that we need to connect it to tilt stability as described in the next subsection. Finally, there are the following relations between these notions.

\centerline{
\xymatrix{
\text{slope-stable} \ar@{=>}[r] & \text{$2$-Gieseker-stable} \ar@{=>}[r] & \text{Gieseker-stable} \ar@{=>}[d] \\
\text{slope-semistable} & \text{$2$-Gieseker-semistable} \ar@{=>}[l] &\text{Gieseker-semistable} \ar@{=>}[l]
}}

\subsection{Tilt Stability}

By using the derived category it is possible to obtain a more flexible form of stability. The key idea due to Bridgeland is to change the category of coherent sheaves for another heart of a bounded t-structure inside the bounded derived category.

The following notion of \emph{tilt stability} was introduced by Bridgeland for K3 surfaces \cite{Bri08:stability_k3}, later generalized to all surfaces by Arcara-Bertram \cite{AB13:k_trivial}, and finally extended to higher dimensions by Bayer-Macr\`i-Toda in \cite{BMT14:stability_threefolds}. 

Let $\beta$ be an arbitrary real number. Then the twisted Chern character $\ch^{\beta}$ is defined to be $e^{-\beta H} \cdot \ch$, where $H$ is the hyperplane class. Note that for $\beta \in \Z$ one has $\ch^{\beta}(E) = \ch(E(-\beta))$ for any $E \in \Db(\P^3)$. Explicitly:
\begin{align*}
\ch^{\beta}_0 &=  \ch_0, \ \ch^{\beta}_1 = \ch_1 - \beta \ch_0, \ \ch^{\beta}_2 = \ch_2 - \beta \ch_1 + \frac{\beta^2}{2} \ch_0,\\
\ch^{\beta}_3 &= \ch_3 - \beta \ch_2 + \frac{\beta^2}{2} \ch_1 - \frac{\beta^3}{6} \ch_0.
\end{align*}

The process of tilting is used to construct a new heart of a bounded t-structure. For more information on the general theory of tilting we refer to \cite{BvdB03:functors, HRS96:tilting}.

\begin{defn}
\begin{enumerate}
\item A torsion pair is defined by
\begin{align*}
\TT_{\beta} &:= \{E \in \Coh(\P^3) : \text{any quotient $E \onto G$ satisfies $\mu(G) > \beta$} \}, \\
\FF_{\beta} &:=  \{E \in \Coh(\P^3) : \text{any non-trivial subsheaf $F \subset E$ satisfies $\mu(F) \leq \beta$} \}.
\end{align*}
We define $\Coh^{\beta}(\P^3)$ as the extension closure $\langle \FF_{\beta}[1],\TT_{\beta} \rangle$.
\item Let $\alpha > 0$ be a positive real number. The \emph{tilt-slope} is defined as
\[
\nu_{\alpha, \beta} := \frac{\ch^{\beta}_2 - \frac{\alpha^2}{2} \ch^{\beta}_0}{\ch^{\beta}_1}.
\]
\item Similarly as before, an object $E \in \Coh^{\beta}(\P^3)$ is called \emph{tilt-(semi)stable} (or \emph{$\nu_{\alpha,\beta}$-(semi)stable}) if for any non-trivial proper subobject $F \into E$ the inequality $\nu_{\alpha, \beta}(F) < (\leq) \nu_{\alpha, \beta}(E/F)$ holds.
\end{enumerate}
\end{defn}

Note that $\Coh^{\beta}(\P^3)$ consists of some two term complex. More precisely, it contains exactly those complexes $E \in \Db(\P^3)$ such that $\HH^0(E) \in \TT_{\beta}$, $\HH^{-1}(E) \in \FF_{\beta}$, and $\HH^i(E) = 0$ whenever $i \neq -1, 0$. The following proposition was proved for K3 surfaces in \cite[Proposition 14.2]{Bri08:stability_k3}, but the proof holds without trouble in our case.

\begin{prop}
\label{prop:large_volume_limit}
If $E \in \Coh^{\beta}(\P^3)$ for $\beta < \mu(E)$ is $\nu_{\alpha, \beta}$-(semi)stable for $\alpha \gg 0$, then it is a $2$-Gieseker-(semi)stable sheaf. Vice versa, if $E$ is a $2$-Gieseker-(semi)stable sheaf, then it is $\nu_{\alpha, \beta}$-semistable for any $\alpha \gg 0$ and $\beta < \mu(E)$.
\end{prop}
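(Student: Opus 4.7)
Both directions rest on the asymptotic expansion
\[
\nu_{\alpha,\beta}(G) = \frac{\ch_2^{\beta}(G)}{\ch_1^{\beta}(G)} - \frac{\alpha^2}{2} \cdot \frac{\ch_0(G)}{\ch_1^{\beta}(G)},
\]
so that as $\alpha \to +\infty$ the ordering by $\nu_{\alpha,\beta}$ of sheaves in $\TT_\beta$ becomes lexicographic: first by $\ch_0/\ch_1^{\beta} = 1/(\mu-\beta)$ (equivalently, by $\mu$), then, among sheaves of common $\mu$, by $\ch_2/\ch_0$. A direct Hirzebruch--Riemann--Roch computation shows that $2$-Gieseker comparison of sheaves is lexicographic in the same two invariants, so on $\TT_\beta$ the two notions coincide in the large-$\alpha$ limit; what remains is to control non-sheaf subobjects of $\Coh^{\beta}(\P^3)$.

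For the forward direction, I first show that $E$ is a sheaf. Consider the sequence $0 \to \HH^{-1}(E)[1] \to E \to \HH^0(E) \to 0$ in $\Coh^{\beta}(\P^3)$. If $\HH^{-1}(E) \neq 0$, then $\HH^{-1}(E) \in \FF_\beta$ is torsion-free of positive rank, so the subobject $\HH^{-1}(E)[1]$ has $\ch_0^{\beta} < 0$ and hence $\nu_{\alpha,\beta}(\HH^{-1}(E)[1]) \to +\infty$, whereas the quotient $\HH^0(E) \in \TT_\beta$ has $\ch_0 \geq 0$ and $\nu_{\alpha,\beta}(\HH^0(E))$ remains bounded above. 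Semistability demands $\nu_{\alpha,\beta}(\HH^{-1}(E)[1]) \leq \nu_{\alpha,\beta}(E) \leq \nu_{\alpha,\beta}(\HH^0(E))$, impossible for $\alpha \gg 0$, unless $\HH^0(E) = 0$; but then $E = \HH^{-1}(E)[1]$ gives $\mu(E) = \mu(\HH^{-1}(E)) \leq \beta$, contradicting $\beta < \mu(E)$. Next, suppose some subsheaf $F \subsetneq E$ $2$-Gieseker-destabilizes $E$, and decompose via the torsion pair $0 \to t(F) \to F \to f(F) \to 0$ with $t(F) \in \TT_\beta$ and $f(F) \in \FF_\beta$. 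Since $\mu^{+}(f(F)) \leq \beta < \mu(E)$, the quotient $f(F)$ alone has strictly smaller $P_2/\ch_0$ than $E$, so $t(F)$ must still $2$-Gieseker-destabilize. Closure of $\TT_\beta$ under quotients yields $E/t(F) \in \TT_\beta$, so $t(F) \hookrightarrow E$ is a genuine subobject in $\Coh^{\beta}(\P^3)$, and the lexicographic compatibility then promotes the $2$-Gieseker destabilization to a $\nu_{\alpha,\beta}$-destabilization for $\alpha \gg 0$.

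For the backward direction, $2$-Gieseker-semistability implies $\mu$-semistability, so $\mu^{-}(E) = \mu(E) > \beta$ and $E \in \TT_\beta \subset \Coh^{\beta}(\P^3)$. For any subobject $F \hookrightarrow E$ in $\Coh^{\beta}(\P^3)$ with cone $Q$, the sheaf-cohomology long exact sequence together with $Q \in \Coh^{\beta}(\P^3)$ and $E$ being a sheaf forces $\HH^{-1}(F) = 0$, i.e., $F$ is a sheaf in $\TT_\beta$; moreover the sheaf-kernel $K$ of $F \to E$ lies in $\FF_\beta$ and $F/K \hookrightarrow E$ is a subsheaf with quotient in $\TT_\beta$. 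If $K = 0$, the $2$-Gieseker inequality for $F \subset E$ translates directly into $\nu_{\alpha,\beta}(F) \leq \nu_{\alpha,\beta}(E)$ for $\alpha$ large. If $K \neq 0$, torsion-freeness of $E$ (a consequence of $2$-Gieseker semistability when $\ch_0(E) > 0$) makes $\mu(F)$ a convex combination with positive weights of $\mu(K) \leq \beta$ and $\mu(F/K) > \beta$, so $\mu(F) < \mu(F/K) \leq \mu(E)$ strictly, and the leading $\alpha^2$-term makes $\nu_{\alpha,\beta}(F) < \nu_{\alpha,\beta}(E)$ for $\alpha$ large enough.

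The main obstacle is uniformity in $\alpha$ for the backward direction: each subobject $F$ gives its own threshold, and one needs a single $\alpha_0$ that works simultaneously for every $F$. I would control this via the Bogomolov inequality applied to $\mu$-semistable constituents of $F$, which bounds $\ch_2(F)/\ch_0(F)$ in terms of its slope and thereby tames the subleading term in the $\nu_{\alpha,\beta}$-comparison, or equivalently by invoking the locally finite wall-and-chamber structure in the $(\alpha,\beta)$-plane alluded to in the introduction, which furnishes a last wall above which $\nu_{\alpha,\beta}$-semistability is independent of $\alpha$.
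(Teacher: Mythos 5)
Your argument is a correct reconstruction of the standard large-volume-limit proof, and it matches the paper's approach exactly: the paper gives no proof of its own but cites Bridgeland's Proposition 14.2 for K3 surfaces, whose argument (asymptotic lexicographic comparison of $\nu_{\alpha,\beta}$ with the reduced $P_2$, killing $\HH^{-1}(E)$ via the subobject $\HH^{-1}(E)[1]$, and passing between subsheaves and subobjects through the torsion pair) is precisely what you reproduce. The two loose ends you should tie off are minor: the uniformity in $\alpha$ for the backward direction is legitimately closed by the largest-wall statement in Theorem \ref{thm:Bertram}(v) rather than by a fresh Bogomolov estimate, and the degenerate comparisons where a factor has $\ch_1^{\beta}=0$ (so $\nu_{\alpha,\beta}=\infty$ and is not ``bounded above'') must be excluded by explicitly invoking the hypothesis $\beta<\mu(E)$.
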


Note that in case $\mu(E) = \infty$, this proposition holds for arbitrary $\beta$. The Chern characters of semistable objects satisfy certain inequalities. The first one is well known as the Bogomolov inequality. It was first proved for slope-semistable sheaves on surfaces (\cite{Rei78:bogomolov, Bog78:inequality, Gie79:bogomolov}).

\begin{thm}[{Bogomolov inequality for tilt stability, \cite[Corollary 7.3.2]{BMT14:stability_threefolds}}]
Any $\nu_{\alpha, \beta}$-semistable object $E \in \Coh^{\beta}(\P^3)$ satisfies
\[
\Delta(E) := \ch_1(E)^2 - 2 \ch_0(E) \ch_2(E) \geq 0.
\]
\end{thm}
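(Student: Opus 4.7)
The strategy is to reduce to the classical Bogomolov inequality for slope-semistable sheaves via a wall-crossing argument, exploiting that $\Delta(E)$ is purely numerical, depending only on $\ch(E)$.

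First, I would dispose of the degenerate case $\ch_0(E) = 0$: here $\ch_1(E) = \ch_1^{\beta}(E) \geq 0$ by the very definition of $\Coh^{\beta}(\P^3)$, so $\Delta(E) = \ch_1(E)^2 \geq 0$ is automatic.

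For $\ch_0(E) \neq 0$, the plan is induction on $\ch_1^{\beta}(E)$, which takes values in a discrete subset of $\R_{\geq 0}$. If $E$ remains $\nu_{\alpha,\beta}$-semistable for all sufficiently large $\alpha$, then by Proposition~\ref{prop:large_volume_limit} (after possibly shifting so that $E \in \TT_\beta$ and adjusting $\beta$ so that $\beta < \mu$), $E$ is a $2$-Gieseker-semistable sheaf, hence slope-semistable. The classical Bogomolov inequality on $\P^3$---proved by restricting $E$ to a general hyperplane, where slope-semistability is preserved, and invoking the classical Bogomolov theorem on surfaces---then gives $\Delta(E) \geq 0$. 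Otherwise, $E$ is destabilized at some wall, yielding a short exact sequence
\[
0 \to A \to E \to B \to 0
\]
in $\Coh^{\beta}(\P^3)$ with $\nu_{\alpha_0,\beta_0}(A) = \nu_{\alpha_0,\beta_0}(E) = \nu_{\alpha_0,\beta_0}(B)$ at the wall parameters, and both $A, B$ of strictly smaller $\ch_1^{\beta}$. By induction, $\Delta(A), \Delta(B) \geq 0$. The equality of tilt slopes cuts out a linear relation among the Chern characters which, combined with those two inequalities, yields $\Delta(E) \geq 0$ by a Hodge-index / Cauchy--Schwarz computation on a rank-two quotient of the numerical Grothendieck group.

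The main obstacle is the algebraic combination step: verifying the linear-algebra identity that assembles $\Delta(E) \geq 0$ out of $\Delta(A), \Delta(B) \geq 0$ and the tilt-slope equality, while correctly handling corner cases where $\ch_1^{\beta}(A) = 0$ or $\ch_0^{\beta}(A) = 0$ (i.e., $\nu_{\alpha,\beta} = \pm\infty$). Secondarily, one must know that walls for a fixed Chern character are locally finite so that the induction terminates in finitely many steps. Both points are standard in tilt stability but form the technical heart of the argument.
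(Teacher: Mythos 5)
The paper does not prove this statement; it is quoted directly from \cite[Corollary 7.3.2]{BMT14:stability_threefolds}, so there is no internal proof to compare against. Your outline reconstructs, in essentially correct form, the standard argument from that reference: reduce to the classical Bogomolov inequality for slope-semistable sheaves via the large volume limit, and propagate the inequality across walls using convexity of $\Delta$ on classes of equal tilt slope (the Hodge-index computation you describe; this is the content of Theorem \ref{thm:Bertram}(vi), and you are right to prove that convexity directly rather than cite it, since its proof in the literature already assumes $\Delta \geq 0$ for the two factors).

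Three points in your write-up need repair. First, $\ch_1^{\beta}(E) = \ch_1(E) - \beta \ch_0(E)$ does \emph{not} take values in a discrete subset of $\R_{\geq 0}$ when $\beta$ is irrational, so your induction does not obviously terminate; the standard fixes are to run the induction for rational $\beta$ (where $\ch_1^{\beta}$ lies in $\tfrac{1}{q}\Z$) and deduce the irrational case from local finiteness of walls and closedness of the semistable locus, or to induct on the finite number of walls above the given point along the vertical ray. Second, in the base case you cannot simply ``shift $E$ so that $E \in \TT_{\beta}$'': a tilt-semistable object with $\ch_0(E) < 0$ is genuinely a two-term complex and $E[\pm 1]$ does not lie in $\Coh^{\beta}(\P^3)$; you must either analyze $\HH^{-1}(E)$ and $\HH^{0}(E)$ separately in the large volume limit or pass to the derived dual via Proposition \ref{prop:tilt_derived_dual}, under which $\Delta$ is invariant. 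Third, restriction of a slope-semistable sheaf on $\P^3$ to a \emph{general hyperplane} is not known to preserve slope-semistability; the classical reduction to surfaces restricts to general hypersurfaces of sufficiently large degree (Mehta--Ramanathan or Flenner) and uses that $\Delta$ rescales by a positive factor under such restriction. None of these issues invalidates the strategy, which is the one used in the cited source, but each is a genuine gap in the argument as written.
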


The following inequality involving the third Chern character is part of a more general conjecture in \cite{BMT14:stability_threefolds} and was brought into the following form in \cite{BMS16:abelian_threefolds}. The case of $\P^3$ was proved in \cite{Mac14:conjecture_p3}

\begin{thm}
\label{thm:p3_conjecture}
For any $\nu_{\alpha,\beta}$-semistable object $E \in \Coh^{\beta}(\P^3)$ the inequality
\[
Q_{\alpha, \beta}(E) := \alpha^2 \Delta(E) + 4\ch_2^{\beta}(E)^2 - 6\ch_1^{\beta}(E) \ch_3^{\beta}(E) \geq 0
\]
holds.
\end{thm}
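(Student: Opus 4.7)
My plan is to follow the strategy of Macr\`i, which reinterprets the inequality $Q_{\alpha,\beta}(E) \geq 0$ as a support-type property for a second stability condition obtained by tilting $\Coh^\beta(\P^3)$ a second time.

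First, I would reduce to a convenient point. Fixing $\ch(E)$, treat $Q_{\alpha,\beta}(E)$ as a function of $(\alpha,\beta)$. Viewed as a quadratic in $\alpha$ with leading coefficient $\Delta(E) \geq 0$ (by the Bogomolov inequality for tilt stability), together with the fact that the set of $(\alpha,\beta)$ where $E$ is $\nu_{\alpha,\beta}$-semistable is an open union of chambers bounded by numerical walls, one can reduce to verifying the inequality at a single critical point $(\alpha_0,\beta_0)$ satisfying $\nu_{\alpha_0,\beta_0}(E)=0$, i.e.\ on the locus $\ch_2^\beta = \tfrac{\alpha^2}{2}\ch_0^\beta$.

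At such a point I would build a second tilt. Using the sign of $\nu_{\alpha_0,\beta_0}$ on the tilt-semistable Harder--Narasimhan factors, form a torsion pair on $\Coh^{\beta_0}(\P^3)$ and tilt again to obtain a new heart $\AA^{\alpha_0,\beta_0}$. On this heart, introduce a one-parameter family of central charges $Z_s$ with $s>0$, whose imaginary part is a positive multiple of $\ch_2^{\beta_0} - \tfrac{\alpha_0^2}{2}\ch_0^{\beta_0}$ and whose real part is a linear combination of $\ch_3^{\beta_0}$ and $\ch_1^{\beta_0}$ calibrated so that a direct computation identifies $Q_{\alpha_0,\beta_0}$ with the discriminant of $Z_s$ up to a positive factor. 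The desired inequality then becomes the statement that $Z_s$ on $\AA^{\alpha_0,\beta_0}$ satisfies a Bogomolov-type support property. I would verify this on extremal objects: for line bundles $\OO(n)$ one has $\Delta=0$, and the direct computation $4(\ch_2^\beta)^2 = (n-\beta)^4 = 6\ch_1^\beta \ch_3^\beta$ gives $Q=0$; skyscraper sheaves and structure sheaves of linear subspaces can be handled by analogous explicit calculation.

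The main obstacle is propagating the verification from these extremal cases to arbitrary tilt-semistable objects. I would set up an induction on $\Delta$ via wall-crossing: if $E$ is destabilized in $\AA^{\alpha_0,\beta_0}$ by a short exact sequence $0 \to A \to E \to B \to 0$, then Bogomolov together with the numerical geometry of walls in the $(\alpha,\beta)$-plane forces $\Delta(A),\Delta(B) < \Delta(E)$, enabling induction, and the inequality for the middle class follows from convexity of the region $\{Q \geq 0\}$ along a suitable slice. The base case $\Delta(E)=0$, where $E$ must be a shift of a twisted line bundle or a Jordan--H\"older build-up of such, requires the classification of slope-semistable sheaves with vanishing discriminant on $\P^3$. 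Ensuring that the second tilt and its stability interact well enough with Harder--Narasimhan filtrations that the support property passes to subquotients is the delicate technical heart of the argument.
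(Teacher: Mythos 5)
First, a point of reference: the paper does not prove this theorem. It is imported from Macr\`i \cite{Mac14:conjecture_p3}, in the quadratic-form reformulation of \cite{BMS16:abelian_threefolds}, so there is no internal proof to compare yours against; what can be assessed is whether your sketch would close on its own. Your overall architecture (reduce to the locus $\nu_{\alpha,\beta}(E)=0$, pass to the double tilt, interpret the inequality as positivity of $Z_{\alpha,\beta,s}$, propagate by wall-crossing) is the right one, but two of its load-bearing steps are missing or wrong. The technical core of Macr\`i's argument is that for suitable $(\alpha,\beta)$ the double-tilted heart $\AA^{\alpha,\beta}(\P^3)$ is of \emph{finite length} with simple objects the exceptional collection $\OO(-2)[3]$, $T(-3)[2]$, $\OO(-1)[1]$, $\OO$ --- exactly the content of Theorem \ref{thm:macri_p3}. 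On the locus $\nu_{\alpha,\beta}(E)=0$ one computes $Q_{\alpha,\beta}(E) = \ch_1^{\beta}(E)\bigl(\alpha^2\ch_1^{\beta}(E) - 6\ch_3^{\beta}(E)\bigr)$, so the inequality becomes the \emph{linear} condition $\Re Z_{\alpha,\beta,s}(E)\geq 0$ (as $s\to 0$), not a statement about ``the discriminant of $Z_s$''; it is this linearity, together with finite length of the heart, that lets one check positivity on the four simples and conclude for every object by additivity. Your list of ``extremal objects'' (line bundles, skyscrapers, $\OO_V$) is not the list of simples --- $T(-3)[2]$ is absent --- and without the finite-length statement, verifying the inequality on any finite list of objects propagates to nothing. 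Establishing the quiver description of the heart is the hard part, and your sketch relegates it to a parenthesis.

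The second gap is the base case of your induction on $\Delta$. The induction terminates not at $\Delta=0$ but at any tilt-semistable object that is never destabilized along the relevant locus, and such objects can have large positive discriminant: the paper's own Lemma \ref{lem:20-1_p3} exhibits objects with $\ch=(2,0,-1,0)$, hence $\Delta=4$, which admit no wall along $\beta=-1$ and are tilt-semistable there for all $\alpha>0$. For these terminal objects an independent argument is required, and it is precisely the missing ingredient above (the finite-length heart, or equivalently a Hom-vanishing plus Euler-characteristic computation against the exceptional collection --- note that Lemma \ref{lem:20-1_p3} itself proves $e\leq 0$ by exactly such a computation, $e=\chi(E)=-h^1(E)-h^3(E)\leq 0$, which is a miniature of what your sketch omits). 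The wall-crossing convexity step you invoke for the inductive step is a genuine lemma of \cite{BMS16:abelian_threefolds} and is fine in spirit, but without a correct treatment of the terminal objects the induction has nothing to stand on.
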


We will need to understand interactions between the derived dual and tilt stability.

\begin{prop}[{\cite[Proposition 5.1.3]{BMT14:stability_threefolds}}]
\label{prop:tilt_derived_dual}
Assume $E \in \Coh^{\beta}(\P^3)$ is $\nu_{\alpha, \beta}$-semistable with $\nu_{\alpha, \beta}(E) \neq \infty$. Then there is a $\nu_{\alpha, -\beta}$-semistable object $\tilde{E} \in \Coh^{-\beta}(\P^3)$ and a sheaf $T$ supported in dimension zero together with a distinguished triangle
\[
\tilde{E} \to \RlHom(E, \OO)[1] \to T[-1] \to \tilde{E}[1].
\]
\end{prop}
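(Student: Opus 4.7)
The plan is to construct $\tilde E$ by excising a zero-dimensional piece from $\RlHom(E, \OO)[1]$, to transfer heart membership via a Chern character identity, and to deduce semistability from that of $E$ by a reverse-dualization argument.

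\emph{Cohomology and heart.} I would begin with the canonical triangle $\HH^{-1}(E)[1] \to E \to \HH^0(E)$ in $\Coh^\beta(\P^3)$, apply $\RlHom(-, \OO)[1]$, and analyze the resulting cohomology sheaves via the induced triangle $\RlHom(\HH^0(E), \OO)[1] \to \RlHom(E, \OO)[1] \to \RlHom(\HH^{-1}(E), \OO)$. Using that local Ext sheaves $\lExt^i(-, \OO)$ on $\P^3$ are supported in codimension at least $i$, that $\HH^{-1}(E) \in \FF_\beta$ is torsion-free, and that $E$ is $\nu_{\alpha,\beta}$-semistable, one identifies the only obstruction to $\RlHom(E, \OO)[1]$ lying in $\Coh^{-\beta}(\P^3)$ as a single zero-dimensional sheaf $T$ in cohomological degree $+1$. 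Setting $\tilde E$ equal to the fiber of the induced map $\RlHom(E, \OO)[1] \to T[-1]$ yields the triangle of the statement; a direct check then confirms $\HH^{-1}(\tilde E) = \lHom(\HH^0(E), \OO) \in \FF_{-\beta}$ (slope constraints flip sign under dualization) and that $\HH^0(\tilde E)$, built from $\lExt^1(\HH^0(E), \OO)$, $\lHom(\HH^{-1}(E), \OO)$, and further torsion Ext contributions, lies in $\TT_{-\beta}$.

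\emph{Chern character identity.} A direct expansion of $e^{\beta H}\cdot\ch(E^\vee)$ yields $\ch^{-\beta}_i(\RlHom(E, \OO)[1]) = \pm\ch^\beta_i(E)$ with alternating signs $(-, +, -, +)$; in particular $\ch^{-\beta}_1(\tilde E) = \ch^\beta_1(E) \geq 0$, consistent with $\tilde E \in \Coh^{-\beta}(\P^3)$, and $\nu_{\alpha,-\beta}(\tilde E) = -\nu_{\alpha,\beta}(E)$, since the zero-dimensional $T$ affects only $\ch_3$.

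\emph{Semistability and main obstacle.} For semistability, suppose for contradiction that a short exact sequence $0 \to A \to \tilde E \to B \to 0$ destabilizes $\tilde E$ in $\Coh^{-\beta}(\P^3)$. Applying $\RlHom(-, \OO)[1]$ to $A$ and $B$ and using that this construction is an involution up to zero-dimensional sheaves (because $\RlHom(T', \OO)$ concentrates in degree $3$ for $T'$ of pure dimension zero, and is again a zero-dimensional sheaf there) produces a destabilizing subobject of $E$ in $\Coh^\beta(\P^3)$, with the slope inequality flipped by the sign change of the Chern character identity; this contradicts the $\nu_{\alpha,\beta}$-semistability of $E$. The principal technical obstacle is the vanishing in the first step: ruling out non-zero-dimensional components of $\HH^i(\RlHom(E, \OO)[1])$ for $i \geq 1$ requires full use of the $\nu_{\alpha,\beta}$-semistability of $E$ via the Bogomolov inequality applied to Harder-Narasimhan factors of the obstructing local Ext sheaves.
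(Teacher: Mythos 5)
The paper does not prove this proposition at all: it is imported verbatim from \cite[Proposition 5.1.3]{BMT14:stability_threefolds}, so the only thing to compare your argument against is the original proof there. Your outline does follow the same overall strategy as that proof (dualize the two-term complex, control the cohomology sheaves of $\RlHom(E,\OO)[1]$ by codimension-of-support estimates for $\lExt^i$, split off a zero-dimensional piece $T$, and transport semistability through the duality), but two steps are genuinely incomplete as written.

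First, the support estimates alone do not give what you need. They show only that $\HH^1(\RlHom(E,\OO)[1])$ is supported in dimension $\leq 1$ (it receives contributions from $\lExt^2(\HH^0(E),\OO)$ and $\lExt^1(\HH^{-1}(E),\OO)$, both of which can be one-dimensional) and that $\HH^2(\RlHom(E,\OO)[1]) = \HH^3(\RlHom(E,\OO))$ is supported in dimension $0$ --- whereas the triangle in the statement forces the former to be zero-dimensional and the latter to \emph{vanish}. The mechanism that closes this gap is not the Bogomolov inequality applied to HN factors of the Ext sheaves, as you suggest; it is the $\Hom$-vanishing $\Hom(Q,E)=0$ for every sheaf $Q$ supported in dimension $\leq 1$, which holds because such $Q$ lie in $\TT_{\beta}\subset\Coh^{\beta}(\P^3)$ with $\nu_{\alpha,\beta}(Q)=\infty$, so a nonzero map would produce a subobject of $E$ of infinite slope, contradicting $\nu_{\alpha,\beta}(E)\neq\infty$; one then converts nonvanishing of the offending cohomology of $\RlHom(E,\OO)$ into such a map via $\RlHom(\OO_p,\OO)\cong\OO_p[-3]$ and its one-dimensional analogue. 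Second, your semistability argument is circular: to dualize a destabilizing sequence $0\to A\to\tilde E\to B\to 0$ back into $\Coh^{\beta}(\P^3)$ you need the duality statement for $A$ and $B$, which are arbitrary objects of the heart and not yet known to be semistable (the claim that $\RlHom(-,\OO)[1]$ preserves the tilted heart up to zero-dimensional corrections is \emph{false} without a semistability hypothesis). The original proof avoids this by working with stable objects, respectively with a minimal (hence semistable) destabilizer, and by establishing heart membership of $\tilde E$ --- which you assert via ``slope constraints flip sign'' but which itself requires the semistability of $E$ --- together with the semistability in one intertwined induction.
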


\subsection{Walls}

Let $\Lambda = \Z \oplus \Z \oplus \tfrac{1}{2} \Z$. Then $H \cdot \ch_{\leq 2}$ maps to $\Lambda$. Varying $(\alpha, \beta)$ changes the set of semistable objects. A \textit{numerical wall} in tilt stability with respect to a class $v \in \Lambda$ is a non-trivial proper subset $W$ of the upper half plane given by an equation of the form $\nu_{\alpha, \beta}(v) = \nu_{\alpha, \beta}(w)$ for another class $w \in \Lambda$. We will usually write $W = W(v, w)$.

A subset $S$ of a numerical wall $W$ is called an \textit{actual wall} if the set of semistable objects with class $v$ changes at $S$. The structure of walls in tilt stability is rather simple. Part (i) - (v) is usually called Bertram's Nested Wall Theorem and appeared in \cite{Mac14:nested_wall_theorem}, while part (vi) and (vii) can be found in \cite[Appendix A]{BMS16:abelian_threefolds}.

\begin{thm}[Structure Theorem for Walls in Tilt Stability]
\label{thm:Bertram}
Let $v \in \Lambda$ be a fixed class. All numerical walls in the following statements are with respect to $v$.
\begin{enumerate}
  \item Numerical walls in tilt stability are either semicircles with center on the $\beta$-axis or rays parallel to the $\alpha$-axis. If $v_0 \neq 0$, there is exactly one numerical vertical wall given by $\beta = v_1/v_0$. If $v_0 = 0$, there is no actual vertical wall.
   \item The curve $\nu_{\alpha, \beta}(v) = 0$ is given by a hyperbola, which may be degenerate if $v_0 = 0$. Moreover, this hyperbola intersects all semicircular walls at their top point.
  \item If two numerical walls given by classes $w,u \in \Lambda$ intersect, then $v$, $w$ and $u$ are linearly dependent. In particular, the two walls are completely identical.
  \item If a numerical wall has a single point at which it is an actual wall, then all of it is an actual wall.
  \item If $v_0 \neq 0$, then there is a largest semicircular wall on both sides of the unique numerical vertical wall. If $v_0 = 0$, then there is a unique largest semicircular wall.
  \item If there is an actual wall numerically defined by an exact sequence of tilt-semistable objects $0 \to F \to E \to G \to 0$ such that $\ch_{\leq 2}(E) = v$, then 
  \[\Delta(F) + \Delta(G) \leq \Delta(E).\]
  Moreover, equality holds if and only if $\ch_{\leq 2}(G) = 0$.
  \item If $\Delta(E) = 0$, then $E$ can only be destabilized at the unique numerical vertical wall. In particular, line bundles, respectively their shifts by one, are tilt-semistable everywhere.
\end{enumerate}
\end{thm}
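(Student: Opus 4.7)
The plan is to separate the seven assertions into three groups: parts (i), (ii), (iii), and (v) are numerical and follow from manipulating the wall equation; part (iv) is a continuity statement; and parts (vi), (vii) combine the wall equation with the Bogomolov inequality. First I would write $\nu_{\alpha,\beta}(v) = \nu_{\alpha,\beta}(w)$ out explicitly and cross-multiply. After expanding $\ch^{\beta}_i$, the equation collapses to
\[
(\alpha^2 + \beta^2)(v_0 w_1 - v_1 w_0) - 2\beta(v_0 w_2 - v_2 w_0) + 2(v_1 w_2 - v_2 w_1) = 0.
\]
When the leading coefficient is nonzero this describes a circle with centre on the $\beta$-axis; otherwise it is a vertical line, necessarily at $\beta = v_1/v_0$ when $v_0 \neq 0$. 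This gives (i). For (iii), two such equations are linearly dependent in the variables $(\alpha^2 + \beta^2, \beta, 1)$ exactly when the three $2 \times 2$ minors among $v, w, u$ all vanish, i.e.\ when $v, w, u$ are linearly dependent in $\Lambda$. For (ii), rewrite $\nu_{\alpha,\beta}(v) = 0$ as $\ch^{\beta}_2(v) = \tfrac{\alpha^2}{2}\ch_0(v)$, which is a hyperbola with centre $(v_1/v_0, 0)$; a direct algebraic check against the wall equation shows that this hyperbola passes through the topmost point of each semicircular wall. Part (v) then comes from the Bogomolov inequality $\Delta(w) \geq 0$, which forces the radii of semicircular walls on each side of the vertical wall to be bounded.

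Part (iv) uses openness of (semi)stability in the tilt parameter space together with local finiteness of walls: if a destabilizing short exact sequence $0 \to F \to E \to G \to 0$ exists at one point of the numerical wall $W$, then $\ch_{\leq 2}(F)$ and $\ch_{\leq 2}(G)$ retain the same tilt slope as $v$ along all of $W$ by construction, and a continuity and deformation argument propagates the short exact sequence to the whole of $W$ after passing to the appropriate hearts.

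The core of the proof is (vi), from which (vii) follows. A direct expansion of $\Delta$ using $\ch_i(E) = \ch_i(F) + \ch_i(G)$ yields
\[
\Delta(E) - \Delta(F) - \Delta(G) = 2\bigl(\ch_1(F)\ch_1(G) - \ch_0(F)\ch_2(G) - \ch_0(G)\ch_2(F)\bigr),
\]
and one checks that the right-hand side is $\beta$-invariant. Substituting the common tilt slope condition $\ch^{\beta}_2(F) = \nu\, \ch^{\beta}_1(F) + \tfrac{\alpha^2}{2}\ch_0(F)$ and similarly for $G$, where $\nu$ is the shared tilt slope on $W$, expresses the cross-term as a quadratic form in $\ch_0$ and $\ch^{\beta}_1$. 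Using positivity of $\ch^{\beta}_1$ on $\Coh^{\beta}(\P^3)$ together with the individual Bogomolov inequalities for $F$ and $G$, one verifies non-negativity and pins down the vanishing locus as exactly $\ch_{\leq 2}(G) = 0$. Part (vii) is then immediate: if $\Delta(E) = 0$, Bogomolov forces $\Delta(F) = \Delta(G) = 0$ and equality in (vi), so $\ch_{\leq 2}(G) = 0$, which cannot produce an actual semicircular wall, leaving only the vertical one. The main obstacle is precisely this algebraic verification: rewriting the cross-term after imposing the wall condition as a manifestly non-negative form and extracting the sharp equality case $\ch_{\leq 2}(G) = 0$.
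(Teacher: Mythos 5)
First, a point of calibration: the paper does not prove Theorem~\ref{thm:Bertram} at all --- parts (i)--(v) are quoted from \cite{Mac14:nested_wall_theorem} and parts (vi)--(vii) from \cite[Appendix A]{BMS16:abelian_threefolds} --- so your proposal can only be compared with those standard arguments, not with anything in this paper. Measured against them, your architecture is the right one, your expanded wall equation is correct, and so is the identity $\Delta(E) - \Delta(F) - \Delta(G) = 2\bigl(\ch_1(F)\ch_1(G) - \ch_0(F)\ch_2(G) - \ch_0(G)\ch_2(F)\bigr)$ together with its $\beta$-invariance.

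There are, however, genuine gaps. The most concrete is in (iii): you assert that the two wall equations are proportional as linear forms in $(\alpha^2+\beta^2,\beta,1)$ exactly when $v,w,u$ are dependent, but that only characterizes when the two walls \emph{coincide}; two distinct circles centered on the $\beta$-axis can still meet in a single point of the upper half-plane, so ``intersect $\Rightarrow$ proportional'' is precisely the content that needs proof. The standard argument is different: at an intersection point $(\alpha_0,\beta_0)$ the linear map $\Lambda\otimes\R \to \R^2$, $x \mapsto (\ch_1^{\beta_0}(x),\, \ch_2^{\beta_0}(x) - \tfrac{\alpha_0^2}{2}\ch_0(x))$, sends $v$, $w$, $u$ into a single line through the origin, whose preimage is a two-dimensional subspace, forcing linear dependence. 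In (vi), the inequality does follow as you say (cleanest by evaluating the $\beta$-independent cross-term at the top of the wall, where $\nu=0$ turns it into $2\bigl(\ch_1^{\beta}(F)\ch_1^{\beta}(G) - \alpha^2\ch_0(F)\ch_0(G)\bigr)$ and Bogomolov gives $\ch_1^{\beta} \geq \alpha\,\lvert\ch_0\rvert$), but your equality analysis is incomplete: equality only forces $\Delta(F)=\Delta(G)=0$ and $\ch_0(F)\ch_0(G)\geq 0$, which is also satisfied when $\ch_{\leq 2}(F)$ and $\ch_{\leq 2}(G)$ are both proportional to $\ch_{\leq 2}(E)$ (e.g.\ $F=\OO^{\oplus 2}$, $G$ of class $(1,0,0)$); ruling that out requires invoking the hypothesis that the sequence defines an \emph{actual}, hence proper, wall. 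Finally, (iv) and (v) are only gestured at: for (v) note there is in general no largest \emph{numerical} wall (the walls $W(\OO,\OO(-n))$ grow without bound), so the claim concerns actual walls and needs the constraint $0 \leq \ch_1^{\beta}(F) \leq \ch_1^{\beta}(E)$ along the wall combined with $\Delta(F),\Delta(G)\geq 0$ to bound the radius; and ``a continuity and deformation argument'' for (iv) is not a proof --- one must show the destabilizing object remains in the heart and remains semistable along all of $W$, which is exactly where the nesting from (iii) is used.
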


If $W = W(v,w)$ is a semicircular wall in tilt stability for two numerical classes $v, w \in \Lambda$, then we denote its \emph{radius} by $\rho_W = \rho(v,w)$ and its \emph{center} on the $\beta$-axis by $s_W = s(v,w)$. The structure of the locus $Q_{\alpha, \beta}(E) = 0$ fits right into into the semicircle wall picture. Indeed, a straightforward computation shows the following.

\begin{lem}
Let $E \in \Db(\P^3)$. The equation $Q_{\alpha, \beta}(E) = 0$ is equivalent to
\[
\nu_{\alpha, \beta}(E) = \nu_{\alpha, \beta}(\ch_1(E), 2 \ch_2(E), 3\ch_3(E)).
\]
In particular, $Q_{\alpha, \beta}(E) = 0$ describes a numerical wall in tilt stability.
\end{lem}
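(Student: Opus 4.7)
The plan is to establish the algebraic identity
\[
Q_{\alpha,\beta}(E) \;=\; 2 \cdot \ch_1^{\beta}(E) \cdot w_1^{\beta} \cdot \bigl(\nu_{\alpha,\beta}(E) - \nu_{\alpha,\beta}(w)\bigr),
\]
where $w := (\ch_1(E), 2\ch_2(E), 3\ch_3(E))$; both assertions of the lemma follow immediately from it. First I would compute the twisted components of $w$ by applying $e^{-\beta H}$ formally:
\[
w_0^{\beta} = \ch_1(E), \quad w_1^{\beta} = 2\ch_2(E) - \beta \ch_1(E), \quad w_2^{\beta} = 3\ch_3(E) - 2\beta \ch_2(E) + \tfrac{\beta^2}{2}\ch_1(E).
\]
Clearing denominators in the equation $\nu_{\alpha,\beta}(E) = \nu_{\alpha,\beta}(w)$ produces the quantity
\[
\bigl(\ch_2^{\beta}(E) - \tfrac{\alpha^2}{2}\ch_0(E)\bigr)\, w_1^{\beta} \;-\; \bigl(w_2^{\beta} - \tfrac{\alpha^2}{2}\, w_0^{\beta}\bigr)\, \ch_1^{\beta}(E),
\]
and the heart of the proof is to expand this and match it against $\tfrac12 Q_{\alpha,\beta}(E)$.

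The expansion naturally splits into an $\alpha^2$-part and an $\alpha^0$-part. The $\alpha^2$-coefficients simplify to $\tfrac{1}{2}\bigl(\ch_1(E)^2 - 2\ch_0(E)\ch_2(E)\bigr) = \tfrac{1}{2}\Delta(E)$ after the mixed $\beta$-contributions $\tfrac{\alpha^2}{2}\beta\, \ch_0(E)\ch_1(E)$ cancel against each other. The $\alpha^0$-part is a polynomial in $\beta$ of degree at most three in each factor, and I expect it to reorganize into $\tfrac{1}{2}\bigl(4 \ch_2^{\beta}(E)^2 - 6 \ch_1^{\beta}(E)\ch_3^{\beta}(E)\bigr)$ once the $\ch_i^{\beta}(E)$ are written out explicitly. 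The calculation is entirely mechanical, and the only real obstacle is bookkeeping: one must track the $\beta^3$ and $\beta^4$ contributions from $\ch_1^{\beta}(E)\ch_3^{\beta}(E)$ carefully and observe that they cancel against those coming from $\ch_2^{\beta}(E)^2$ after the factor of $4$ versus $6$ is accounted for.

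Given the identity, the equivalence $Q_{\alpha,\beta}(E) = 0 \iff \nu_{\alpha,\beta}(E) = \nu_{\alpha,\beta}(w)$ holds on the open locus where neither $\ch_1^{\beta}(E)$ nor $w_1^{\beta}$ vanishes; on the (at most two) vertical lines where one of these vanishes, the corresponding slope is interpreted as $+\infty$, and the factored form of the identity extends the equivalence there by continuity. The final sentence of the lemma is then tautological: the defining equation of a numerical wall with respect to $\ch_{\leq 2}(E)$ has exactly the form $\nu_{\alpha,\beta}(\ch_{\leq 2}(E)) = \nu_{\alpha,\beta}(u)$ for some $u \in \Lambda$, and since only the first two coordinates of $w$ enter the slope $\nu_{\alpha,\beta}(w)$, the class $u = (\ch_1(E), 2\ch_2(E))$ witnesses the claim.
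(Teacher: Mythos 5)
Your proposal is correct and matches the paper's approach exactly: the paper gives no written proof, asserting only that ``a straightforward computation shows'' the lemma, and your computation is that computation. The identity $Q_{\alpha,\beta}(E) = 2\,\ch_1^{\beta}(E)\, w_1^{\beta}\,\bigl(\nu_{\alpha,\beta}(E) - \nu_{\alpha,\beta}(w)\bigr)$ does hold: the $\alpha^2$-terms collapse to $\tfrac{1}{2}\Delta(E)$ and the $\alpha^0$-terms to $2\ch_2^{\beta}(E)^2 - 3\ch_1^{\beta}(E)\ch_3^{\beta}(E)$, with the $\beta^3$ and $\beta^4$ contributions cancelling as you predict. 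One small correction to your last sentence: the slope $\nu_{\alpha,\beta}(w)$ of a class $(w_0,w_1,w_2) \in \Lambda$ depends on all three coordinates (the third enters through $w_2^{\beta}$), not just the first two; the ``in particular'' clause instead follows simply because $w = (\ch_1(E), 2\ch_2(E), 3\ch_3(E))$ already lies in $\Lambda = \Z \oplus \Z \oplus \tfrac{1}{2}\Z$, so the equation $\nu_{\alpha,\beta}(E) = \nu_{\alpha,\beta}(w)$ is by definition a numerical wall.
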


We denote the numerical wall $Q_{\alpha, \beta}(E) = 0$ by $W_Q = W_Q(E)$, its radius by $\rho_Q = \rho_Q(E)$, and its center by $s_Q = s_Q(E)$. The following lemma is a highly convenient tool to control the rank of destabilizing subobjects. A very close version appeared first in \cite[Proposition 8.3]{CH16:ample_cone_plane}.

\begin{lem}[{\cite[Lemma 2.4]{MS18:space_curves}}]
\label{lem:higherRankBound}
Assume that a tilt-semistable object $E$ is destabilized by either a subobject $F \into E$ or a quotient $E \onto F$ in $\Coh^{\beta}(\P^3)$ inducing a non-empty semicircular wall $W$. Assume further that $\ch_0(F) > \ch_0(E) \geq 0$. Then the inequality
\[
\rho_W^2 \leq \frac{\Delta(E)}{4 \ch_0(F) (\ch_0(F) - \ch_0(E))}
\]
holds.
\end{lem}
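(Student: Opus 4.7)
The plan is to use Theorem~\ref{thm:Bertram}(ii), which says that the top point $(\alpha,\beta) = (\rho_W, s_W)$ of the semicircle $W$ lies on the hyperbola $\nu_{\alpha,\beta}(E) = 0$. Letting $G$ denote the other factor of the destabilizing sequence (and replacing $F$, $G$ by Jordan--H\"older factors at the wall if either fails to be semistable), one obtains a short exact sequence
\[
0 \to F \to E \to G \to 0 \qquad \text{or} \qquad 0 \to G \to E \to F \to 0
\]
in $\Coh^{s_W}(\P^3)$ with both $F$ and $G$ tilt-semistable of the same tilt slope as $E$. Writing $a_X := \ch_0(X)$, the hypothesis $a_F > a_E \geq 0$ forces $a_G = a_E - a_F < 0$. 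The condition $\nu_{\rho_W, s_W}(X) = 0$ at the top of the wall specializes to $\ch_2^{s_W}(X) = \tfrac{\rho_W^2}{2}\,a_X$ for each $X \in \{E, F, G\}$.

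The next step is to apply the Bogomolov inequality for tilt stability at $\beta = s_W$ to $F$ and $G$. Using the top-of-wall formulas for $\ch_2^{s_W}$, these become
\[
\bigl(\ch_1^{s_W}(F)\bigr)^2 \geq a_F^2\,\rho_W^2, \qquad \bigl(\ch_1^{s_W}(G)\bigr)^2 \geq a_G^2\,\rho_W^2.
\]
A direct check from the definitions of $\TT_{s_W}$ and $\FF_{s_W}$ shows that every object of $\Coh^{s_W}(\P^3)$ has nonnegative $\ch_1^{s_W}$, so I can extract positive square roots to conclude $\ch_1^{s_W}(F) \geq a_F\,\rho_W$ and $\ch_1^{s_W}(G) \geq (a_F - a_E)\,\rho_W$.

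Finally, additivity of $\ch_1^{s_W}$ on the short exact sequence yields
\[
\ch_1^{s_W}(E) = \ch_1^{s_W}(F) + \ch_1^{s_W}(G) \geq (2a_F - a_E)\,\rho_W.
\]
Squaring both sides and combining with the twist-invariant identity $(\ch_1^{s_W}(E))^2 = \Delta(E) + 2 a_E\,\ch_2^{s_W}(E) = \Delta(E) + a_E^2\,\rho_W^2$ produces
\[
\Delta(E) \geq \bigl((2a_F - a_E)^2 - a_E^2\bigr)\,\rho_W^2 = 4 a_F(a_F - a_E)\,\rho_W^2,
\]
which rearranges to the claimed inequality. I do not anticipate a serious obstacle: once the top-of-wall condition is invoked, the substantive content is that the two Bogomolov inequalities collapse into clean lower bounds on $\ch_1^{s_W}(F)$ and $\ch_1^{s_W}(G)$, after which additivity gives the rest. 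The only sign-sensitive point is the nonnegativity of $\ch_1^{s_W}$ on $\Coh^{s_W}(\P^3)$, which is what justifies extracting positive square roots.
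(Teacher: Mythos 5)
Your proof is correct and follows essentially the same route as the cited source \cite[Lemma 2.4]{MS18:space_curves} (the paper itself only quotes the lemma without proof): evaluate at the top point of the wall using Theorem \ref{thm:Bertram}(ii), apply the Bogomolov inequality to both factors, and combine nonnegativity and additivity of $\ch_1^{s_W}$ with the twist-invariance of $\Delta$. The only cosmetic remark is that the destabilizing factors along an actual wall are already taken to be tilt-semistable there, so your parenthetical about passing to Jordan--H\"older factors is unnecessary --- and is in fact best avoided, since such a replacement could change $\ch_0(F)$ and void the hypothesis $\ch_0(F) > \ch_0(E)$.
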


If we understand the radius $\rho_Q(E)$, this lemma will provide a key tool to control the rank of destabilizing subobjects.

\subsection{Bridgeland stability}

Tilt stability has well-behaved computational properties. However, for dimension greater than or equal to three it does not have well-behaved moduli spaces. This is similar to the issues of slope stability on surfaces, where Gieseker stability turns out to be the better notion. We need to recall the constructions of Bridgeland stability on $\P^3$ due to \cite{BMT14:stability_threefolds,Mac14:conjecture_p3}. The idea is to perform another tilt as previously. Let
\begin{align*}
\TT'_{\alpha, \beta} &:= \{E \in \Coh^{\beta}(\P^3) : \text{any quotient $E \onto G$ satisfies $\nu_{\alpha, \beta}(G) > 0$} \}, \\
\FF'_{\alpha, \beta} &:=  \{E \in \Coh^{\beta}(\P^3) : \text{any non-trivial subobject $F \into E$ satisfies $\nu_{\alpha, \beta}(F) \leq 0$} \}
\end{align*}
and set $\AA^{\alpha, \beta}(\P^3) := \langle \FF'_{\alpha, \beta}[1], \TT'_{\alpha, \beta} \rangle $. For any $s > 0$ they define
\begin{align*}
Z_{\alpha,\beta,s} &:= -\ch^{\beta}_3 + (s+\tfrac{1}{6})\alpha^2 \ch^{\beta}_1 + i (\ch^{\beta}_2 - \frac{\alpha^2}{2} \ch^{\beta}_0), \\
\lambda_{\alpha,\beta,s} &:= -\frac{\Re(Z_{\alpha,\beta,s})}{\Im(Z_{\alpha,\beta,s})}.
\end{align*}

\begin{defn}
An object $E \in \AA^{\alpha, \beta}(\P^3)$ is called $\lambda_{\alpha, \beta, s}$-(semi)stable if for any non-trivial subobject $F \into E$, we have $\lambda_{\alpha, \beta, s}(F) < (\leq) \lambda_{\alpha, \beta, s}(E)$.
\end{defn}

\subsection{Moduli Spaces}

For any $v \in K_0(\P^3)$, $\alpha > 0$, $\beta \in \R$, $s > 0$ we make the following definitions.

\begin{defn}
\begin{enumerate}
    \item The moduli space of slope-semistable sheaves with Chern character $v$ is denoted by $M(v)$. 
    \item The moduli space of $\nu_{\alpha, \beta}$-semistable objects with Chern character $v$ is denoted $M^{\tilt}_{\alpha, \beta}(v)$.
    \item The moduli space of $\lambda_{\alpha, \beta, s}$-semistable objects with Chern character $v$ is denoted $M^{B}_{\alpha, \beta, s}(v)$.
\end{enumerate}
\end{defn}

For some Bridgeland stability conditions it is possible to exchange the heart $\AA^{\alpha, \beta}(\P^3)$ by a finite length category.

\begin{thm}[\cite{Mac14:conjecture_p3}]
\label{thm:macri_p3}
Let $\alpha < 1/3$, $\beta \in (-5/3, -1]$, and $0 < s \ll 1$. For any $\gamma \in \R$, we can define a torsion pair
\begin{align*}
\TT''_{\gamma} &:= \{E \in \AA^{\alpha, \beta}(\P^3) : \text{any quotient $E \onto G$ satisfies $\lambda_{\alpha, \beta, s}(G) > \gamma$} \}, \\
\FF''_{\gamma} &:=  \{E \in \AA^{\alpha, \beta}(\P^3) : \text{any non-trivial subobject $F \into E$ satisfies $\lambda_{\alpha, \beta, s}(F) \leq \gamma$} \}.
\end{align*}
There is a choice of $\gamma \in \R$ such that
\[
\langle \TT''_{\gamma}, \FF''_{\gamma}[1] \rangle = \langle \mathcal{O}(-2)[3], T(-3)[2], \mathcal{O}(-1)[1],
\mathcal{O} \rangle.
\]
In particular, moduli spaces of Bridgeland-stable objects for these special choices of stability conditions are the same as moduli spaces of representation of finite-dimensional algebras as defined in \cite{Kin94:moduli_quiver_reps}. This means they have projective moduli spaces.
\end{thm}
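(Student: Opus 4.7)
My plan is to identify the four objects
\[
E_0 = \OO(-2)[3], \quad E_1 = T(-3)[2], \quad E_2 = \OO(-1)[1], \quad E_3 = \OO
\]
as Bridgeland-stable objects of pairwise distinct $\lambda_{\alpha,\beta,s}$-slopes inside $\AA^{\alpha,\beta}(\P^3)$ for the prescribed range of parameters, and then to choose $\gamma$ so that the tilt at $\gamma$ produces a finite-length heart having exactly these four simples. Since $(\OO(-3), \OO(-2), \OO(-1), \OO)$ is a full strong exceptional collection on $\P^3$, the mutated dual collection $(E_0, E_1, E_2, E_3)$ (recalling $T(-3) \cong \Omega^2(1)$) is again full, so any abelian heart whose simples are these four objects is automatically equivalent to the category of finite-dimensional representations of the associated Beilinson quiver with relations, at which point King's GIT construction yields projective moduli.

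First I would establish $\nu_{\alpha,\beta}$-semistability of each $E_i$. Since $\Delta(\OO(k)) = 0$, Theorem \ref{thm:Bertram}(vii) gives tilt-semistability of the three line bundles everywhere off the vertical wall, and the shifts appearing in the definition of $E_0, E_2, E_3$ are precisely what is needed to place them first in $\Coh^\beta(\P^3)$ and then in $\AA^{\alpha,\beta}(\P^3)$ for $\beta \in (-5/3, -1]$. For $E_1$, slope-stability of $T$ is classical via the Euler sequence $0 \to \OO \to \OO(1)^{\oplus 4} \to T \to 0$, and a direct computation gives $\Delta(T(-3)) = 4$; the remaining task is to exclude semicircular walls in tilt stability throughout the region $\alpha < 1/3$, $\beta \in (-5/3, -1]$. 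Lemma \ref{lem:higherRankBound} bounds the radius of any wall coming from a destabilizer of rank $\geq 4$, and for rank $\leq 3$ destabilizers one combines Theorem \ref{thm:p3_conjecture} with the Bogomolov inequality to force the Chern characters of any hypothetical destabilizer into an empty numerical region.

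Next I would compute $\lambda_{\alpha,\beta,s}(E_i)$ as explicit functions of $(\alpha, \beta, s)$. The assumptions $\alpha < 1/3$, $\beta \in (-5/3, -1]$, and $0 < s \ll 1$ are tuned exactly so that the four slopes admit a single consistent ordering throughout the region, and so that there exists $\gamma \in \R$ with the property that, after tilting at $\gamma$, every $E_i$ acquires the same phase in $\langle \TT''_\gamma, \FF''_\gamma[1] \rangle$. Fullness of the exceptional collection then implies that every object of $\Db(\P^3)$ is extension-generated by shifts of the $E_i$, so the tilted heart coincides with $\langle E_0, E_1, E_2, E_3 \rangle$ as an abelian category, with the four $E_i$ as its simples; the identification with the quiver representation category and the invocation of \cite{Kin94:moduli_quiver_reps} finishes the argument. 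The main obstacle is the wall analysis for $T(-3)$: a clean exclusion of actual walls in the given parameter window requires carefully balancing the radius bound of Lemma \ref{lem:higherRankBound} against the cubic inequality $Q_{\alpha,\beta} \geq 0$, and showing that the precise cutoff $\beta \leq -1$ is exactly what prevents the hyperbola $\nu_{\alpha,\beta}(T(-3)) = 0$ from being crossed by a destabilizing extension inside the region of interest.
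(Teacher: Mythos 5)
The paper itself offers no proof of this statement: it is quoted verbatim from Macr\`i's paper \cite{Mac14:conjecture_p3}, so the only meaningful comparison is with the argument there. Your overall architecture does match Macr\`i's: place each of $\OO$, $\OO(-1)[1]$, $T(-3)[2]$, $\OO(-2)[3]$ (up to a single shift) inside $\AA^{\alpha,\beta}(\P^3)$, control their $\lambda_{\alpha,\beta,s}$-slopes, choose $\gamma$ separating the ones that land in $\TT''_{\gamma}$ from the ones that land in $\FF''_{\gamma}$, and identify the resulting tilt with the extension closure of the exceptional collection. But there are two genuine gaps. First, fullness of the collection is not the property that makes $\langle \OO(-2)[3], T(-3)[2], \OO(-1)[1], \OO\rangle$ an abelian heart of finite length with these four objects as its simples; what is needed is that the collection is \emph{Ext-exceptional}, i.e.\ $\Hom^{\leq 0}(E_i, E_j) = 0$ for $i \neq j$. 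This is exactly what dictates the shifts $[3],[2],[1],[0]$, and it is verified by a cohomology computation with the Euler sequence that you never perform; a full exceptional collection with the wrong shifts (say, all shifts zero) generates the derived category just as well but its extension closure is not a heart. The correct mechanism for your last step is then: the extension closure is itself the heart of a bounded t-structure (Bondal), it is contained in the tilted heart once each generator is shown to lie there, and two nested hearts of bounded t-structures coincide. Your phrasing that ``every $E_i$ acquires the same phase'' is also not what happens: the four objects have distinct $\lambda$-slopes and are distributed between $\TT''_{\gamma}$ and $\FF''_{\gamma}[1]$, and what must be checked is that a single $\gamma$ is compatible with that distribution -- this is the actual computational content hiding behind your phrase ``tuned exactly.''

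Second, and more seriously, your proposed use of Theorem \ref{thm:p3_conjecture} to exclude walls for $T(-3)$ is circular relative to the source: in \cite{Mac14:conjecture_p3} the inequality $Q_{\alpha,\beta} \geq 0$ is \emph{deduced from} the quiver-heart description you are trying to prove (the heart is used to verify the inequality in this very parameter region before it is propagated elsewhere). The tilt-semistability of the relevant shift of $T(-3)$ for $\alpha < 1/3$, $\beta \in (-5/3,-1]$ must therefore be established using only the classical Bogomolov inequality $\Delta \geq 0$ and the wall structure of Theorem \ref{thm:Bertram}. This is feasible because $\Delta(T(-3)) = 4$ is small and $\ch_1^{\beta}(T(-3)[1]) = 5 + 3\beta \in (0,2]$ leaves very few numerical possibilities for a destabilizing subobject, but it is a step you must carry out without appeal to $Q_{\alpha,\beta}$.
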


Through most of this article, we will only study tilt stability. The following statement makes a connection to Bridgeland stability.

\begin{thm}[{\cite[Theorem 6.1(3)]{Sch15:stability_threefolds}}]
\label{thm:Bridgeland_stability_hyperbola}
Let $v \in K_0(\P^3)$, $\alpha_0 > 0$, $\beta_0 \in \R$, and $s > 0$ such that $\nu_{\alpha_0, \beta_0}(v) = 0$, $H^2 \cdot v^{\beta_0}_1 > 0$, and $\Delta(v) \geq 0$. Assume that all $\nu_{\alpha_0, \beta_0}$-semistable objects of class $v$ are $\nu_{\alpha_0, \beta_0}$-stable. Then there is a neighborhood $U$ of $(\alpha_0, \beta_0)$ such that 
\[
M^{B}_{\alpha, \beta, s}(v) = M^{\tilt}_{\alpha, \beta}(v)
\]
for all $(\alpha, \beta) \in U$ with $\nu_{\alpha, \beta}(v) > 0$. Moreover, in this case all objects parametrized in $M^{B}_{\alpha, \beta, s}(v)$ are $\lambda_{\alpha, \beta, s}$-stable.
\end{thm}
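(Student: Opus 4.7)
The plan is to transport tilt-stability across the hyperbola $\nu_{\alpha_0,\beta_0}(v)=0$, on which the Bridgeland central charge satisfies $\Im Z_{\alpha_0,\beta_0,s}(v)=\ch_2^{\beta_0}(v)-\tfrac{\alpha_0^2}{2}\ch_0^{\beta_0}(v)=0$ by definition. Thus class $v$ has $\lambda_{\alpha_0,\beta_0,s}(v)=+\infty$ at the limit; moving infinitesimally into $\nu_{\alpha,\beta}(v)>0$ inserts $v$ into the heart $\AA^{\alpha,\beta}(\P^3)$ with a very large but finite $\lambda$-slope. By openness of tilt-stability and local finiteness of tilt walls (Theorem \ref{thm:Bertram}), one may shrink $U$ so that every tilt-semistable object of class $v$ in $U$ is tilt-stable and has a fixed set of semistable objects.

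I would first show every tilt-stable $E$ of class $v$ with $\nu_{\alpha,\beta}(v)>0$ lies in $\AA^{\alpha,\beta}(\P^3)$: any $\Coh^{\beta}$-quotient $E\twoheadrightarrow G$ satisfies $\nu_{\alpha,\beta}(G)\geq\nu_{\alpha,\beta}(v)>0$, hence $E\in\TT'_{\alpha,\beta}$. Suppose for contradiction $F\hookrightarrow E$ is a Bridgeland destabilizer in $\AA^{\alpha,\beta}$ with $\lambda_{\alpha,\beta,s}(F)\geq\lambda_{\alpha,\beta,s}(E)$. The $\Coh^{\beta}$-cohomology long exact sequence of $0\to F\to E\to G\to 0$ together with $E\in\Coh^{\beta}$ forces $\HH^{-1}(F)=0$, so $F\in\Coh^{\beta}$. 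Letting $F'$ be the image of $F\to E$ inside $\Coh^{\beta}$, the see-saw property applied to $0\to\HH^{-1}(G)[1]\to F\to F'\to 0$ combined with $\HH^{-1}(G)\in\FF'_{\alpha,\beta}$ shows $\lambda(F')\geq\lambda(F)\geq\lambda(E)$, so we may assume $F\hookrightarrow E$ is a subobject in $\Coh^{\beta}$ as well.

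Next, consider the tilt Harder--Narasimhan filtration of $F$ at $(\alpha,\beta)$ with factors $A_1,\dots,A_m$, each tilt-semistable with $\nu_{\alpha,\beta}(A_i)\geq 0$. Specializing $(\alpha,\beta)\to(\alpha_0,\beta_0)$ and using $\nu_{\alpha_0,\beta_0}(E)=0$, the persistence of $\lambda(F)\geq\lambda(E)$ forces at least one factor $A_i$ to limit onto the hyperbola $\nu_{\alpha_0,\beta_0}(\cdot)=0$ with its $\ch_3^{\beta_0}$-ratio dominating that of $E$. By Theorem \ref{thm:p3_conjecture}, tilt-semistability at $(\alpha_0,\beta_0)$ gives $Q_{\alpha_0,\beta_0}(A_i)\geq 0$, which combined with $\nu_{\alpha_0,\beta_0}(A_i)=0$ forces a ratio inequality between $\ch_3^{\beta_0}(A_i)/\ch_1^{\beta_0}(A_i)$ and $\ch_3^{\beta_0}(E)/\ch_1^{\beta_0}(E)$ opposite to what destabilization requires, unless the classes are numerically proportional. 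The uniqueness hypothesis for tilt-stable objects of class $v$ at $(\alpha_0,\beta_0)$ then collapses the inequality to equality, contradicting strict $\lambda$-destabilization. Conversely, a $\lambda$-stable $E$ of class $v$ that were tilt-unstable in $U$ would produce a tilt-destabilizer extending by Theorem \ref{thm:Bertram}(iv) to a wall passing through $(\alpha_0,\beta_0)$, contradicting the hypothesis; hence $E$ is tilt-stable, and by the preceding paragraph $\lambda$-stable. Scheme-theoretic equality of the moduli spaces follows by comparing the two GIT constructions once the set-theoretic bijection is established.

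The main obstacle is the limiting analysis in the third paragraph: one must rule out Bridgeland destabilization arising purely from the $\ch_3^{\beta}$-component of $\lambda$ without a simultaneous tilt-destabilization on the hyperbola. The quadratic inequality of Theorem \ref{thm:p3_conjecture} furnishes the decisive control on $\ch_3^{\beta}$ in terms of lower Chern data for each HN factor, and combined with the Bogomolov inequality and local finiteness of tilt walls (Theorem \ref{thm:Bertram}), it allows choosing $U$ small enough that the specialization argument succeeds uniformly.
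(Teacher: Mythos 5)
First, a point of order: the paper does not prove this statement. It is imported verbatim from \cite[Theorem 6.1(3)]{Sch15:stability_threefolds}, so there is no in-paper argument to measure your proposal against; I can only judge it on its own terms. Your outline does follow the expected strategy for results of this type (tilt-stable objects with $\nu_{\alpha,\beta}>0$ lie in $\TT'_{\alpha,\beta}$, and Bridgeland destabilizers are forced onto the hyperbola as $\Im Z_{\alpha,\beta,s}(v)\to 0$), but the decisive steps are asserted rather than carried out, and two of them are incorrect or incomplete as written.

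Concretely: (1) the reduction to a $\Coh^{\beta}(\P^3)$-subobject is garbled. From $0\to F\to E\to G\to 0$ in $\AA^{\alpha,\beta}(\P^3)$ one gets $0\to\HH^{-1}(G)\to F\to F'\to 0$ in $\Coh^{\beta}(\P^3)$, and since $\HH^{-1}(G)\in\FF'_{\alpha,\beta}$ the induced short exact sequence in the new heart is $0\to F\to F'\to\HH^{-1}(G)[1]\to 0$; that is, $F$ is a \emph{sub}object of $F'$ in $\AA^{\alpha,\beta}(\P^3)$, not a quotient as you claim, so the see-saw gives no a priori inequality $\lambda(F')\geq\lambda(F)$. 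One must instead work with the quotient $G$ and both of its $\Coh^{\beta}$-cohomologies. (2) The limiting argument in your third paragraph only functions in one sign regime: as $(\alpha,\beta)\to(\alpha_0,\beta_0)$ with $\nu_{\alpha,\beta}(v)>0$ one has $\Im Z(v)\to 0^+$, hence $\lambda(E)\to+\infty$ or $-\infty$ according to the sign of $\Re Z_{\alpha_0,\beta_0,s}(v)$. In the $-\infty$ case the condition $\lambda(F)\geq\lambda(E)$ becomes vacuous in the limit and does not force any HN factor of $F$ onto the hyperbola; you must pass to quotients there, and in either case treat destabilizers with $\ch_1^{\beta_0}=0$ separately. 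Moreover, the claim that $Q_{\alpha_0,\beta_0}(A_i)\geq 0$ together with $\nu_{\alpha_0,\beta_0}(A_i)=0$ yields a ratio inequality ``opposite to what destabilization requires'' is precisely the computation the whole proof hinges on, and it is not done. (3) The converse inclusion --- that a $\lambda_{\alpha,\beta,s}$-stable object of class $v$ near the hyperbola is an object of $\Coh^{\beta}(\P^3)$ and tilt-semistable, rather than a genuine two-term complex in $\AA^{\alpha,\beta}(\P^3)$ --- is skipped, and the closing appeal to ``comparing the two GIT constructions'' does not apply: these moduli spaces are not built by GIT (see Theorem \ref{thm:moduli_bridgeland_stable_objects}), and the identification is one of moduli functors once the semistable objects and the notion of family coincide.
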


The following result by Piyaratne and Toda is a major step towards the construction of well-behaved moduli spaces. It applies in particular to the case of $\P^3$, since the conjectural BMT-inequality is known.

\begin{thm}[\cite{PT15:bridgeland_moduli_properties}]
\label{thm:moduli_bridgeland_stable_objects}
Let $X$ be a smooth projective threefold such that the conjectural construction of Bridgeland stability from \cite{BMT14:stability_threefolds} works. Then any moduli space of semistable objects for such a Bridgeland stability condition is a universally closed algebraic stack of finite type over $\C$. If there are no strictly semistable objects, the moduli space becomes a proper algebraic space of finite type over $\C$.
\end{thm}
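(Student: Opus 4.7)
The plan is to verify that the moduli functor is an Artin stack of finite type, then check the valuative criterion for universal closedness, and finally establish separatedness in the strictly-stable case. The starting point is the general moduli stack of universally gluable complexes on $\P^3$ due to Lieblich, which is known to be algebraic and locally of finite type. Inside it the locus of objects lying in the heart $\AA^{\alpha, \beta}(\P^3)$ with fixed Chern character is open, and so is the locus of $\lambda_{\alpha,\beta,s}$-semistable objects by the general theory of continuity of stability in families; intersecting these open substacks singles out the moduli stack of interest. To upgrade this to finite type, one needs boundedness. I would deduce this in two stages: first, $\nu_{\alpha,\beta}$-semistable objects of fixed $\ch_{\leq 2}$ form a bounded family via the Bogomolov inequality together with Grothendieck's boundedness criterion; second, Theorem \ref{thm:p3_conjecture} bounds $\ch_3$ of tilt-semistable objects, and the tilting used to define $\AA^{\alpha,\beta}(\P^3)$ preserves boundedness because the torsion pair is cut out by the already bounded tilt-slopes $\nu_{\alpha,\beta}$.

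The real substance is the valuative criterion for universal closedness. Given a discrete valuation ring $R$ with fraction field $K$ and residue field $k$, together with a $K$-point $E_K$ of the moduli stack, one must produce an $R$-flat extension $\tilde E$ lying in the relative heart whose special fiber is $\lambda_{\alpha,\beta,s}$-semistable. The key technical tool is the Abramovich--Polishchuk construction, which extends a bounded t-structure on $\Db(\P^3)$ to one on $\Db(\P^3_R)$ compatibly with base change. My strategy is: extend $E_K$ to some complex $\bar E$ on $\P^3_R$, apply the cohomology functor with respect to the relative heart to land inside $\AA^{\alpha,\beta}$, and then iteratively modify the central fiber by its Harder--Narasimhan factors, subtracting off destabilizing pieces one at a time. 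Termination of the iteration is controlled by the Bogomolov inequality applied to the HN factors, since each modification strictly decreases a discrete invariant built from $\Delta$.

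The hardest step is precisely this valuative criterion, because at each stage one must verify that the modified object stays in the heart, is flat over $R$, and has a central fiber with strictly simpler HN filtration. This requires a careful argument combining openness of the heart, the boundedness established above, and the semicontinuity of the HN polygon. Once universal closedness is in hand, separatedness in the absence of strictly semistable objects is a short Jordan--H\"older argument: any two $R$-flat extensions $\tilde E, \tilde E'$ of the same generic object $E_K$ admit a nonzero map in either direction, and stability of the special fibers forces this map to be an isomorphism. Together with the algebraic stack structure and finite type, this yields the proper algebraic space asserted in the second half of the theorem.
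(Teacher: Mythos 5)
This theorem is not proved in the paper at all: it is imported verbatim from Piyaratne--Toda \cite{PT15:bridgeland_moduli_properties}, so there is no internal proof to compare against. Judged on its own terms, your outline does track the broad architecture of the actual argument in that reference (Lieblich's stack of universally gluable complexes, openness of the heart and of semistability, boundedness, a Langton-type elimination of destabilizing factors over a DVR using the Abramovich--Polishchuk relative t-structure, and the standard Jordan--H\"older argument for separatedness when every semistable object is stable). But as written it is a plan rather than a proof, and the steps you defer to are precisely the hard content of \cite{PT15:bridgeland_moduli_properties}. Concretely: (1) openness of Bridgeland semistability in families is not available ``by the general theory'' independently of boundedness --- the two are proved together and each feeds the other; (2) the claim that the second tilt ``preserves boundedness because the torsion pair is cut out by the already bounded tilt-slopes'' is where the real work lies, since objects of $\AA^{\alpha,\beta}$ are complexes whose tilt-HN factors must be controlled uniformly, and Grothendieck's criterion applies to sheaves, not to two-term complexes in $\Coh^{\beta}$; (3) in the valuative criterion, the assertion that each modification ``strictly decreases a discrete invariant built from $\Delta$'' is not justified --- Langton's termination argument uses a comparison of HN polygons in a discrete lattice, and adapting it through two tilts is delicate and is not a routine consequence of the Bogomolov inequality on the factors.

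A smaller point: the statement is for an arbitrary smooth projective threefold on which the conjectural BMT construction works, whereas your argument is phrased throughout for $\P^3$ and invokes Theorem \ref{thm:p3_conjecture}. For the purposes of this paper only $\P^3$ is needed, but the proof you sketch does not establish the theorem as stated. In short, the proposal is a correct roadmap to the cited result, not a proof of it; the appropriate move in the context of this paper is exactly what the author does, namely to cite \cite{PT15:bridgeland_moduli_properties}.
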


\subsection{Some known bounds}

We recall further known results about Chern character bounds for tilt stability in $\P^3$.

\begin{lem}[{\cite[Lemma 5.4]{Sch15:stability_threefolds}}]
\label{lem:simple_cases}
\begin{enumerate}
    \item Let $E \in \Coh^{\beta}(\P^3)$ be tilt-semistable with $\ch(E) = (1,0,-1,1)$. Then $E \cong \II_L$ for a line $L \subset \P^3$.
    \item Let $E \in \Coh^{\beta}(\P^3)$ be tilt-semistable with $\ch(E) = (0,1,d,e)$, then $E \cong \II_{Z/V}(d + 1/2)$ where $Z$ is a dimension zero subscheme of length $\tfrac{1}{24} + \tfrac{d^2}{2} - e$.
\end{enumerate}
\end{lem}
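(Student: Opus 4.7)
The plan for both parts is to first reduce $E$ to a specific kind of coherent sheaf, and then pin down its structure from the Chern character.

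For (i), the bound $\Delta(E) = 2$ is small enough to control destabilizing short exact sequences tightly: Theorem~\ref{thm:Bertram}(vi) forces $\Delta(A) + \Delta(B) \leq 2$ for any $0 \to A \to E \to B \to 0$ in $\Coh^\beta(\P^3)$, and Lemma~\ref{lem:higherRankBound} rules out subobjects of rank greater than one via the radius bound. A short numerical enumeration of possible Chern characters $(r_A, c_A, d_A)$ for $A$ compatible with both inequalities leaves essentially the wall associated to the Koszul resolution $0 \to \OO(-2) \to \OO(-1)^{\oplus 2} \to \II_L \to 0$. Above this wall, Proposition~\ref{prop:large_volume_limit} identifies any tilt-semistable $E$ with a $2$-Gieseker-semistable rank-one torsion-free sheaf, hence $E \cong \II_Z$ for some subscheme $Z \subset \P^3$. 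Matching $\ch_2 = -1$ forces $Z$ to be a curve of degree one, and matching $\ch_3 = 1$ excludes embedded points or non-reduced structure, so $Z$ is a line. On the wall itself every semistable extension arises from a map $\phi \colon \OO(-2) \to \OO(-1)^{\oplus 2}$ and has cokernel $\II_L$ for the line cut out by the two linear forms comprising $\phi$, so the same conclusion holds.

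For (ii), the argument is simpler because $\ch_0(E) = 0$ immediately makes $E$ a coherent torsion sheaf: all torsion sheaves lie in $\TT_\beta$, so every rank-zero object of $\Coh^\beta(\P^3)$ is a genuine sheaf in degree zero. Any torsion subsheaf $T \subset E$ supported in dimension at most one satisfies $\ch_0(T) = \ch_1(T) = 0$ and hence $\nu_{\alpha,\beta}(T) = +\infty$, which strictly exceeds the finite $\nu_{\alpha,\beta}(E)$ and would contradict semistability; so $E$ is pure of dimension two. Since $\ch_1(E) = 1$, the scheme-theoretic support is a single hyperplane $V \cong \P^2$, and $E$ restricted to $V$ is a rank-one torsion-free sheaf, hence of the form $\II_{Z/V}(n)$ for an integer $n$ and a zero-dimensional subscheme $Z \subset V$. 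A direct Chern character computation using $\ch(\OO_V) = (0, 1, -\tfrac{1}{2}, \tfrac{1}{6})$ pins down $n = d + \tfrac{1}{2}$ (so in particular $d \in \Z + \tfrac{1}{2}$) and $\mathrm{length}(Z) = \tfrac{d^2}{2} + \tfrac{1}{24} - e$.

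The main obstacle is the wall enumeration in part (i): one must carefully check that no rank-zero destabilizer (eliminated by the Bogomolov budget once $c_A^2 \geq 1$, and trivially when $c_A = 0$ since the slope becomes infinite) and no higher-rank destabilizer (eliminated by Lemma~\ref{lem:higherRankBound}) produces an actual wall, leaving only the Koszul wall, and then verify that every strictly semistable extension on and above that wall is genuinely an ideal sheaf of a line. Part (ii) is then a structural consequence once purity of $E$ is secured.
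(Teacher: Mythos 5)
First, a caveat on the comparison itself: the paper does not prove this lemma --- it is imported verbatim as \cite[Lemma 5.4]{Sch15:stability_threefolds} --- so your proposal has to stand on its own. Its overall strategy (kill all walls except the Koszul one, pass to the large volume limit via Proposition \ref{prop:large_volume_limit} to land on a sheaf, then read off the structure from the Chern character) is surely the right one, and the endgames of both parts are correct: $\ch_3=1$ does force $Z=L$, and the computation $n=d+\tfrac12$, $\mathrm{length}(Z)=\tfrac{d^2}{2}+\tfrac1{24}-e$ is right. But the reductions to those endgames have genuine gaps. In (i), Lemma \ref{lem:higherRankBound} does \emph{not} rule out destabilizers of rank $\geq 2$: the Koszul wall itself is cut out by the rank-two subobject $\OO(-1)^{\oplus 2}\into \II_L$, and for $\ch_0(F)=2$ the lemma gives $\rho_W^2\leq\tfrac14$, which is exactly the radius of that wall. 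To convert the radius bound into ``the wall must be the Koszul wall'' you need the missing ingredient that no actual wall lies strictly inside it; this holds because the Koszul wall coincides with $W_Q(E)$ (both are $\alpha^2+(\beta+\tfrac32)^2=\tfrac14$) and Theorem \ref{thm:p3_conjecture} empties its interior --- which also disposes of the case, not addressed in your write-up, that $E$ is semistable only at a point inside the wall. Without this input the ``$\Delta$-budget'' enumeration does not close up (e.g.\ $\ch_{\leq2}(A)=(4,-5,3)$ satisfies $\Delta(A)=\Delta(B)=1$ and must be excluded by checking its wall is empty), and the parenthetical about rank-zero destabilizers is off as well: $(0,1,-\tfrac32)=\ch_{\leq2}(\OO_V(-1))$ survives the Bogomolov budget and cuts out the Koszul wall again via $\II_L\onto\II_{L/V}$.

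In (ii), the opening reduction is not justified: ``all torsion sheaves lie in $\TT_\beta$'' does not imply that rank-zero objects of $\Coh^{\beta}(\P^3)$ are sheaves. The issue is whether $\HH^{-1}(E)=0$; since $\FF_\beta$ is torsion-free one only learns that $\HH^{-1}(E)$ is torsion-free of rank equal to $\rk\HH^{0}(E)$, which need not vanish. Concretely, $\OO\oplus\OO(-1)[1]\in\Coh^{\beta}(\P^3)$ for $\beta\in[-1,0)$ has class $\ch(\OO_V)=(0,1,-\tfrac12,\tfrac16)$ and is strictly $\nu_{\alpha,\beta}$-semistable on the wall $W(\OO,\OO(-1))$ without being a sheaf --- so the reduction genuinely requires a wall/large-volume argument (no semicircular wall for $(0,1,d)$ is induced by a rank-zero subobject, so one pushes to $\alpha\gg0$, where a nonzero $\HH^{-1}(E)[1]$ would have $\nu\to+\infty$ and destabilize), and it also shows that the lemma, like your proof, must really be read for stable objects or generic $(\alpha,\beta)$; the same caveat applies to your treatment of strictly semistable objects on the Koszul wall in (i), where $\OO(-1)^{\oplus2}\oplus\OO(-2)[1]$ has class $(1,0,-1,1)$. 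The purity, support, and Chern character arguments that follow in (ii) are fine.
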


\begin{prop}
\label{prop:rank_one_old}
Let $E \in \Coh^{\beta}(\P^3)$ be a $\nu_{\alpha, \beta}$-semistable object for some $(\alpha, \beta)$ with either $\ch(E) = (1,0,-d,e)$ or $\ch(E) = (-1,0,d,e)$. Then
\[
e \leq \frac{d(d+1)}{2}.
\]
In case of equality and $\ch_0(E) = 1$, the object $E$ is the ideal sheaf of a plane curve.
\end{prop}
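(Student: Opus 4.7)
The plan is first to reduce to $\ch(E) = (1,0,-d,e)$. Applying Proposition \ref{prop:tilt_derived_dual} to a tilt-semistable $E$ of class $(-1,0,d,e)$ produces a tilt-semistable $\tilde E$ with $\ch(\tilde E) = (1,0,-d,e+n)$, where $n\geq 0$ is the length of the zero-dimensional sheaf $T$, so a bound $\ch_3 \leq d(d+1)/2$ for the first class implies the same bound for the second. Bogomolov forces $d \geq 0$, and $d = 0$ is immediate: $Q_{\alpha,\beta}(E)$ simplifies to $6\beta e$, and since $\Delta(E)=0$ implies by Theorem \ref{thm:Bertram}(vii) that $E$ is tilt-semistable for every $\beta<0$, evaluating at such $\beta$ gives $e \leq 0 = d(d+1)/2$.

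Assume now $d\geq 1$. The target is a short exact sequence
\[
0 \to \OO(-1) \to E \to G \to 0
\]
in some $\Coh^\beta(\P^3)$ with $G$ tilt-semistable of class $(0,1,-d-\tfrac{1}{2},e+\tfrac{1}{6})$. Lemma \ref{lem:simple_cases}(ii) would then identify $G \cong \II_{Z/V}(-d)$ for a hyperplane $V\subset\P^3$ and a zero-dimensional $Z\subset V$ of length $\ell(Z) = \tfrac{d(d+1)}{2} - e$. Non-negativity of $\ell(Z)$ yields the desired bound, and equality $e=d(d+1)/2$ forces $Z=\emptyset$, $G=\OO_V(-d)$, so that $E$ is identified with the ideal sheaf of the plane curve $C\subset V$ cut out by the induced section of $\OO_V(d)$.

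The main obstacle is producing this short exact sequence. I plan to compare the numerical walls $W_0 := W(E,\OO(-1))$, which has center $-(d+\tfrac{1}{2})$ and radius $d-\tfrac{1}{2}$, and the BMT wall $W_Q := \{Q_{\alpha,\beta}(E) = 0\}$, which has center $-3e/(2d)$ and radius squared $9e^2/(4d^2)-2d$. Theorem \ref{thm:p3_conjecture} forbids $E$ from being tilt-semistable in the open interior of $W_Q$, so the nested structure of walls (Theorem \ref{thm:Bertram}(iii)) together with local finiteness forces an actual destabilizing wall $W$ for $E$ with $W \supseteq W_Q$. Applying Lemma \ref{lem:higherRankBound} to any hypothetical rank-$\geq 2$ destabilizer bounds $\rho_W^2 \leq \Delta(E)/8 = d/4$, which contradicts $\rho_W \geq \rho_Q$ whenever $e > d(d+1)/2$ and $d \geq 2$ (since then $\rho_Q^2 > (9d^2-14d+9)/16 > d/4$). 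Hence the destabilizing subobject or quotient has rank at most one: a rank-zero destabilizer is a twisted hyperplane ideal sheaf by Lemma \ref{lem:simple_cases}(ii), and among rank-one destabilizers $F$, the constraints from the wall equation $s_W = (d+\ch_2(F))/\ch_1(F)$, Bogomolov $\Delta(F)\geq 0$, and integrality of $\ch_1(F)$ isolate $F\cong\OO(-1)$ as the only numerically consistent possibility. The case $d=1$ is even simpler: the BMT inequality evaluated at $(\alpha,\beta)\to(0^+,-1)$ directly yields $e \leq 1 = d(d+1)/2$ without any wall analysis.
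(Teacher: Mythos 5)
The paper itself disposes of the bound by citing \cite[Proposition 3.2]{MS18:space_curves} and only adds the identification of the quotient in the equality case, so you are in effect reproving the cited result; your architecture (dualize to reduce $(-1,0,d,e)$ to $(1,0,-d,e+n)$, handle $d=0,1$ directly, and for $d\geq 2$ force an actual wall containing $W_Q$, kill rank $\geq 2$ destabilizers via Lemma \ref{lem:higherRankBound}, and feed the quotient of $\OO(-1)\into E$ into Lemma \ref{lem:simple_cases}) is the right one and parallels the paper's own rank-two arguments in Lemmas \ref{lem:no_rank_3_walls} and \ref{lem:rank_one_walls}.

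The gap is in the sentence claiming that the wall equation, Bogomolov for $F$, and integrality of $\ch_1(F)$ ``isolate $F\cong\OO(-1)$ as the only numerically consistent possibility.'' That is false. The condition for the wall $W(E,F)$ to contain $W_Q$ is $s_W=\tfrac{d+\ch_2(F)}{\ch_1(F)}\leq s_Q=-\tfrac{3e}{2d}$, and with $\ch_1(F)=-1$ and $e=\tfrac{d(d+1)}{2}$ this only forces $\ch_2(F)\geq\tfrac{3-d}{4}$; so every class $\ch_{\leq 2}(F)=(1,-1,\tfrac{1}{2}-y)$ with $0\leq y\lesssim\tfrac{d+1}{4}$ passes all three of your tests (e.g.\ for $d=5$, $e=15$ the class $(1,-1,-\tfrac{1}{2})$ gives $s_W=-\tfrac{9}{2}=s_Q$ and $\Delta(F)=2>0$). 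The same happens for rank-zero destabilizers: nothing in your list forces $\ch_1(F)=1$, so Lemma \ref{lem:simple_cases}(ii) does not immediately apply (for instance $(0,4,-9)$ is numerically admissible at $d=2$). Ruling these out is the actual content of the proof: one must bound $\ch_3$ of both factors --- applying the proposition inductively to $F(1)$ of class $(1,0,-y,\ast)$, and Lemma \ref{lem:simple_cases} or Theorem \ref{thm:rank_zero_bound} to the rank-zero piece --- combine this with the constraint $s(E,F)\leq s_Q$ to bound $y$, and maximize the resulting parabola in $y$ to see that only $y=0$ attains $e=\tfrac{d(d+1)}{2}$. This is exactly the mechanism of the paper's Lemma \ref{lem:rank_one_walls}(i) in the rank-two setting, and without it your argument establishes neither the inequality nor the equality case.
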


\begin{proof}
The bounds where shown in \cite[Proposition 3.2]{MS18:space_curves}. If $\ch_0(E) = 1$, then the proof also shows that in case of equality, $E$ is destabilized by a morphism $\OO(-1) \into E$ unless $\ch_0(E) = 1$ and $d = 1$. The special case is solved directly by Lemma \ref{lem:simple_cases}. Let $G = E/\OO(-1)$ be the quotient. Then 
\[
\ch(G) = \left(0, 1, -d - \frac{1}{2}, \frac{d(d+1)}{2} + \frac{1}{6} \right).
\]
By Lemma \ref{lem:simple_cases}, we get $G \cong \OO_V(-d)$, and the claim follows.
\end{proof}

Line bundles can be easily identified by their Chern characters as follows. This was shown in \cite[Proposition 4.1, 4.5]{Sch15:stability_threefolds}.

\begin{prop}
\label{prop:line_bundles_uniquely_stable}
Let $E$ be a tilt-semistable or Bridgeland-semistable object. Assume that there are integers $n,m$ with $m > 0$ such that either
\begin{enumerate}
\item $v = m \ch(\OO(n))$, or
\item $v = -m\ch_{\leq 2}(\OO(n))$.
\end{enumerate}
Then $E \cong \OO(n)^{\oplus m}$ or a shift of it. Moreover, in the case $m=1$ the line bundle $\OO(n)$ is stable. 
\end{prop}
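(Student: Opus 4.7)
The strategy is to reduce everything to the case $n = 0$ and $\ch(E) = (m, 0, 0, 0)$ and then identify $E$ as a direct sum of copies of $\OO$. The auto-equivalence $-\otimes \OO(-n)$ preserves both tilt and Bridgeland semistability up to a translation of $\beta$ and sends case (i) to $\ch(E) = (m, 0, 0, 0)$. In case (ii), after twisting, $\ch_{\leq 2}(E) = (-m, 0, 0)$ with $\ch_3(E)$ a priori free; combining the BMT inequality Theorem \ref{thm:p3_conjecture} evaluated at opposite signs of $\beta$ with the duality Proposition \ref{prop:tilt_derived_dual} forces $\ch_3(E) = 0$, after which $E[-1]$ lands in case (i). With $\ch(E) = (m,0,0,0)$ one computes $\Delta(E) = 0$, so Theorem \ref{thm:Bertram}(vii) propagates tilt-semistability throughout the upper half-plane away from the vertical wall $\beta = 0$, and iterating Theorem \ref{thm:Bertram}(vi) together with Bogomolov forces every tilt Jordan--H\"older factor $F$ of $E$ to satisfy $\Delta(F) = 0$ as well.

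For the tilt-semistable case, Proposition \ref{prop:large_volume_limit} then gives that $E$ is a $2$-Gieseker-semistable sheaf, in particular torsion-free of rank $m$ with $c_1 = 0$. The $2$-Gieseker-Jordan--H\"older factors of $E$ share the normalized $P_2$-polynomial of $\OO$ and have $\Delta = 0$. A direct Riemann--Roch computation on $\P^3$ shows that any such factor of rank $r\geq 2$ admits a rank-one subsheaf of the form $\II_Z$ with $Z$ zero-dimensional, whose normalized $P_2$ coincides with that of its quotient (because both have $\ch_1 = 0$ and $\ch_2 = 0$), contradicting strict $2$-Gieseker-stability. Hence every factor is $\OO$, and since $\Ext^1_{\Coh}(\OO,\OO) = H^1(\OO) = 0$ the iterated extension splits, giving $E \cong \OO^{\oplus m}$. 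For $m = 1$, stability is automatic as $\OO$ is simple.

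For the Bridgeland-semistable case, the plan is to realize $E$ inside Macr\`i's finite-length heart of Theorem \ref{thm:macri_p3}, whose exceptional simples are $\OO(-2)[3]$, $T(-3)[2]$, $\OO(-1)[1]$, $\OO$. Their classes are linearly independent in $K_0$, so the class $[E] = m[\OO]$ forces every Jordan--H\"older factor of $E$ in this heart to equal $\OO$, and the conclusion $E \cong \OO^{\oplus m}$ follows from $\Ext^1(\OO,\OO) = 0$ as before; for $m = 1$ stability again follows from simplicity of $\OO$ in that heart.

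The main technical obstacle is precisely this last passage: the hyperbola $\nu_{\alpha,\beta}(\ch(\OO)) = 0$ corresponds to $\beta = -\alpha$, which does not intersect Macr\`i's parameter range $\alpha < 1/3$, $\beta \in (-5/3, -1]$, so Theorem \ref{thm:Bridgeland_stability_hyperbola} cannot be applied directly. One must instead show by a wall-crossing argument that an object with $\Delta(E) = 0$ and the given Chern character cannot cross any actual wall in Bridgeland stability, thereby transporting the given Bridgeland semistability into Macr\`i's region where the finite-length heart identification is available.
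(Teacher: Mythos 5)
The paper does not actually prove this proposition; it quotes it from \cite[Propositions 4.1 and 4.5]{Sch15:stability_threefolds}, so any self-contained argument is necessarily a ``different route.'' Your skeleton (twist to $n=0$, use $\Delta=0$ with Theorem \ref{thm:Bertram}(vi),(vii) to propagate semistability and control Jordan--H\"older factors, reduce case (ii) to case (i) via Proposition \ref{prop:tilt_derived_dual}) is the right one, but several load-bearing steps are wrong or missing. In case (ii) your mechanism for pinning down $\ch_3$ fails for sign reasons: with $\ch(E)=(-m,0,0,e)$ one has $E\in\Coh^{\beta}(\P^3)$ only for $\beta>0$, where $Q_{\alpha,\beta}(E)=-6\beta m e\geq 0$ gives $e\leq 0$, while the dual $\tilde E$ has $\ch(\tilde E)=(m,0,0,e+\ell)$ with $\ell=\ch_3(T)\geq 0$ and lives at $-\beta<0$, where $Q\geq 0$ gives $e+\ell\leq 0$. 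Both inequalities point the same way; nothing forces $e\geq 0$. One must instead first prove case (i), apply it to $\tilde E$ to get $\tilde E\cong\OO^{\oplus m}$, and then dualize back, using that $E$ is a two-term complex in $\Coh^{\beta}(\P^3)$ to kill the zero-dimensional sheaf $T$.

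The identification of the $2$-Gieseker factors is also not justified: Riemann--Roch computes $\chi(F)=r+\ch_3(F)$, not $h^0(F)$, and for a stable factor of rank $r\geq 2$ you control neither $h^1$, $h^2$ nor even the sign of $\ch_3(F)$, so the existence of a section (hence of the subsheaf $\II_Z$) does not follow. You either need the classical triviality theorem for $\mu$-semistable sheaves with $c_1=c_2=0$ (Grauert--M\"ulich), or, staying inside the paper's toolkit, you can apply Theorem \ref{thm:p3_conjecture} to each tilt Jordan--H\"older factor $F_i$ (stable on a whole side of the vertical wall, so the inequality holds for all $\beta$ there) to get $\ch_3(F_i)\leq \lambda_i n^3/6$ and conclude equality from $\sum_i\ch_3(F_i)=mn^3/6$. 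For the Bridgeland half you explicitly leave the main step unproved: transporting semistability from an arbitrary stability condition into Macr\`i's region for Theorem \ref{thm:macri_p3} (which is moreover tied to $\beta$ near $-1$, hence to particular twists) is exactly where the work lies. Finally, ``stability for $m=1$ because $\OO$ is simple'' is not a valid inference --- simple objects can be strictly semistable; here one should argue that a Jordan--H\"older factor of $\OO(n)$ would have $\ch_{\leq 2}$ a positive integral multiple of $\ch_{\leq 2}(\OO(n))$, forcing a single factor.
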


The following statement from \cite[Theorem 3.4]{MS18:space_curves} had a further error term which we will not need. For the convenience of the reader, we will give a proof, since it is shorter and substantially easier without the error term.

\begin{thm}
\label{thm:rank_zero_bound}
Let $E \in \Coh^{\beta}(\P^3)$ be a $\nu_{\alpha, \beta}$-semistable object with $\ch(E) = (0,c,d,e)$, where $c > 0$. Then 
\[
e \leq \frac{c^3}{24} + \frac{d^2}{2c}.
\]
\end{thm}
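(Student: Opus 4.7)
The plan is to combine the wall structure from Theorem~\ref{thm:Bertram} with the inequality $Q_{\alpha,\beta}(E) \geq 0$ of Theorem~\ref{thm:p3_conjecture}. Because $\ch_0(E) = 0$, the tilt slope simplifies to $\nu_{\alpha,\beta}(E) = d/c - \beta$, so the locus $\nu_{\alpha,\beta}(E) = 0$ degenerates to the vertical line $\beta = d/c$. By Theorem~\ref{thm:Bertram}(ii) every semicircular wall for $E$ has its top on this line, hence all walls for $E$ and the numerical locus $W_Q(E) = \{Q_{\alpha,\beta}(E) = 0\}$ are concentric semicircles centered at $(d/c, 0)$. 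Substituting $\beta = d/c$ yields $Q_{\alpha, d/c}(E) = \alpha^2 c^2 + 3d^2 - 6ce$, so $W_Q(E)$ has squared radius $\rho_Q^2 = 6e/c - 3d^2/c^2$, and the target inequality $e \leq c^3/24 + d^2/(2c)$ is equivalent to $\rho_Q \leq c/2$.

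Next I reduce to bounding wall radii. The semistability of $E$ at $(\alpha_0, \beta_0)$ extends to the entire open chamber through that point, on which $Q_{\alpha,\beta}(E) \geq 0$. Because $Q$ changes sign only across $W_Q$, this chamber must lie outside $W_Q$, forcing $\rho_Q \leq \rho_1$, where $\rho_1$ is the inner radius of the chamber. If $\rho_1 = 0$ then $\rho_Q \leq 0$ and the stronger inequality $e \leq d^2/(2c)$ follows immediately. Otherwise $\rho_1$ is the radius of some actual wall for $E$, and it suffices to prove that every such wall has radius at most $c/2$.

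Fix a destabilizing short exact sequence $0 \to F \to E \to G \to 0$ in $\Coh^\beta(\P^3)$ on such a wall of radius $\rho$, and set $\ch_{\leq 2}(F) = (r, c', d')$. A wall with $r = 0$ would force $\ch_{\leq 2}(F)$ to be proportional to $\ch_{\leq 2}(E)$ and therefore not to give an actual wall, so after possibly interchanging $F$ and $G$ I may assume $r \geq 1$. Combined with $\nu$-semistability of the factors, $F$ must be a sheaf lying in $\TT_\beta$ and $G$ must be of the form $G'[1]$ for a torsion-free sheaf $G' \in \FF_\beta$, for every $\beta$ in the open wall interval $(d/c - \rho, d/c + \rho)$. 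Applying the defining property of $\TT_\beta$ to the identity quotient $F \onto F$ yields $\mu(F) = c'/r > \beta$, and in the limit $\beta \to d/c + \rho$ one obtains $c'/r \geq d/c + \rho$. Symmetrically, the subsheaf condition for $\FF_\beta$ applied to $H = G'$ gives $\mu(G') = (c' - c)/r \leq \beta$, hence $(c'-c)/r \leq d/c - \rho$ in the limit. Adding these two inequalities produces $c/r \geq 2\rho$, so $\rho \leq c/(2r) \leq c/2$.

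The main technical point is justifying that the factors $F$ and $G'$ are honest sheaves on the wall, i.e.\ that $\HH^{-1}(F) = 0$ and $\HH^0(G) = 0$; this follows from the $\nu$-semistability of $F$ and $G$ together with the sign constraints on the rank in the tilted heart, and is standard in the tilt-stability literature. Once this is in place, the proof reduces to the simple observation that the two slope inequalities sandwich $c'/r$ between $d/c + \rho$ and $d/c + c/r - \rho$, from which $2\rho \leq c/r$ is immediate.
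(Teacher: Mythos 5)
Your proof is correct and follows essentially the same route as the paper: rank-zero factors cannot produce semicircular walls, every actual wall coming from a factor of rank $r \geq 1$ has radius at most $c/(2r) \leq c/2$ (this is exactly Lemma~\ref{lem:higherRankBound} specialized to $\ch_0(E) = 0$, which you re-derive by hand), and then $Q_{\alpha,\beta}(E) \geq 0$ on the region where $E$ must remain semistable yields the bound. One remark: the ``main technical point'' you flag is stronger than what you need --- you do not have to show that $F$ and $G'$ are honest sheaves, since the two slope inequalities you want are just $\ch_1^{\beta}(F) \geq 0$ and $\ch_1^{\beta}(G) \geq 0$ for every $\beta$ in the wall interval, and these hold for arbitrary objects of $\Coh^{\beta}(\P^3)$ directly from the definition of the tilted heart.
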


\begin{proof}
Note that if a subobject of rank zero destabilizes $E$, then it does so independently of $(\alpha, \beta)$. Therefore, any wall must be induced by a subobject or quotient of positive rank. By Lemma \ref{lem:higherRankBound} this means that any wall $W$ must have radius satisfying
\[
\rho_W \leq \frac{c}{2}.
\]
Therefore, $E$ has to be tilt-semistable for some $(\alpha, \beta)$ inside or on the semicircular wall with radius $\tfrac{c}{2}$. But for such $(\alpha, \beta)$ the inequality $Q_{\alpha, \beta}(E) \geq 0$ implies the statement.
\end{proof}


\section{Classifying rank two reflexive sheaves with maximal third Chern character}

Before stating the theorem we require further notation. Let $\alpha > 0$, $\beta \in \R$. Any $\nu_{\alpha, \beta}$-semistable object $E \in \Coh^{\beta}(\P^3)$ has a Jordan-H\"older filtration
\[
0 = E_0 \to E_1 \to \ldots \to E_n = E,
\]
where all the factors $F_i = E_i/E_{i+1}$ are $\nu_{\alpha, \beta}$-stable and have slope $\nu_{\alpha, \beta}(F_i) = \nu_{\alpha, \beta}(E)$. In Bridgeland stability Jordan-H\"older factors are unique up to order. The same is not true in tilt stability. This is a serious issue that we will have to deal with. We say that that $E$ satisfies the \emph{JH-property with respect to $(\alpha, \beta)$} if the Jordan-H\"older factors of $E$ are unique up to order.

\begin{thm}
\label{thm:sequences_maximal_ch3}
Let $E \in \Coh^{\beta}(\P^3)$ be a tilt-semistable rank two object with $\ch(E) = (2,c,d,e)$.
\begin{enumerate}
    \item If $c = -1$, then $d \leq -\tfrac{1}{2}$.
    \begin{enumerate}
        \item If $d = -\tfrac{1}{2}$, then $e \leq \tfrac{5}{6}$. In case of equality, $E$ is destabilized by a short exact sequence
        \[
        0 \to \OO(-1)^{\oplus 3} \to E \to \OO(-2)[1] \to 0.
        \]
        Moreover, $E$ satisfies the JH-property along the wall.
        \item If $d < -\tfrac{1}{2}$, then
        \[
        e \leq \frac{d^2}{2} - d + \frac{5}{24}.
        \]
        In case of equality, $E$ is destabilized by a short exact sequence
        \[
        0 \to \OO(-1)^{\oplus 2} \to E \to \OO_V\left(d - \frac{1}{2}\right) \to 0,
        \]
        where $V \subset \P^3$ is a plane. Moreover, $E$ satisfies the JH-property the wall.
    \end{enumerate}
    \item If $c = 0$, then $d \leq 0$.
    \begin{enumerate}
        \item If $d = 0$, then $e \leq 0$. In case of equality, $E \cong \OO^{\oplus 2}$.
        \item If $d = -1$, then $e \leq 0$. In case of equality, there is a short exact sequence
        \[
        0 \to T(-3) \to \OO(-1)^{\oplus 5} \to E \to 0,
        \]
        where $T$ is the tangent bundle of $\P^3$.
        \item If $d = -2$, then $e \leq 2$. In case of equality, $E$ is destabilized by a short exact sequence
        \[
        0 \to \OO(-1)^{\oplus 4} \to E \to \OO(-2)^{\oplus 2}[1] \to 0.
        \]
        \item If $d = -3$, then $e \leq 4$. In case of equality, $E$ is destabilized by one of the short exact sequences
        \[
        0 \to \OO(-1)^{\oplus 3} \to E \to \OO(-3)[1] \to 0,
        \]
        \[
        0 \to F \to E \to \OO_V(-2) \to 0,
        \]
        \[
        0 \to \OO_V(-2) \to E \to F \to 0,
        \]
        where $V \subset \P^3$ is a plane and $F \in M(2,-1,-\tfrac{1}{2},\tfrac{5}{6})$. Moreover, any semistable $E$ satisfies $E$ satisfies the JH-property along its destabilizing wall.
        \item If $d \leq -4$, then
        \[
        e \leq \frac{d^2}{2} + \frac{d}{2} + 1.
        \]
        In case of equality, $E$ is destabilized via an exact sequence
        \[
        0 \to F \to E \to \OO_V(d + 1) \to 0,
        \]
        where $V \subset \P^3$ is a plane and $F \in M(2,-1,-\tfrac{1}{2},\tfrac{5}{6})$. Moreover, $E$ satisfies the JH-property along the wall.
    \end{enumerate}
\end{enumerate}
\end{thm}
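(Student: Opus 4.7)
\emph{Overall strategy.} The proof is built on combining the inequality $Q_{\alpha,\beta}(E) \geq 0$ of Theorem \ref{thm:p3_conjecture} with Bertram's wall structure (Theorem \ref{thm:Bertram}) and the rank bound of Lemma \ref{lem:higherRankBound}. The numerical locus $W_Q(E) = \{Q_{\alpha,\beta}(E) = 0\}$ is itself a semicircular wall whose radius scales with $\ch_3(E)$; outside it, $E$ cannot be $\nu_{\alpha,\beta}$-semistable, so large $\ch_3(E)$ forces $E$ to be destabilised on some actual wall inside $W_Q(E)$.

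\emph{Proving the bound.} By Proposition \ref{prop:large_volume_limit}, $E$ is tilt-semistable for $\alpha \gg 0$. Moving down in $\alpha$, either $E$ stays semistable or it hits a semicircular wall $W$. Throughout the semistable region $Q_{\alpha,\beta}(E) \geq 0$ holds; evaluating it at the top of the outermost allowed wall, which lies on the hyperbola $\nu_{\alpha,\beta}(E) = 0$ by Theorem \ref{thm:Bertram}(ii), yields an inequality that, case by case, specialises to the claimed bound on $e$. For $c = -1$ I would symmetrise the argument via the derived dual of Proposition \ref{prop:tilt_derived_dual}. When $e$ strictly exceeds the bound, the outermost admissible wall fails to exist: the nested wall theorem, Bogomolov's inequality, and Lemma \ref{lem:higherRankBound} rule out every candidate destabiliser on purely numerical grounds, forcing $Q_{\alpha,\beta}(E) < 0$ at some point where $E$ ought to be semistable, which is the required contradiction.

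\emph{Classification at equality.} When $e$ equals the bound, the semicircle $W_Q(E)$ is tangent to the hyperbola $\nu = 0$, so $E$ is strictly semistable on some actual wall $W$. Fix $(\alpha, \beta) \in W$ and a destabilising exact sequence $0 \to F \to E \to G \to 0$ in $\Coh^{\beta}(\P^3)$. The Chern characters $\ch_{\leq 2}(F) \in \Lambda$ are constrained by $\ch_0(F) + \ch_0(G) = 2$, matching slopes on $W$, $\Delta(F), \Delta(G) \geq 0$ with $\Delta(F) + \Delta(G) \leq \Delta(E)$ from Theorem \ref{thm:Bertram}(vi), and the rank bound from Lemma \ref{lem:higherRankBound}. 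A finite enumeration leaves only the Chern characters appearing in the stated exact sequences. Each factor is then uniquely identified: line bundles and their shifts by Proposition \ref{prop:line_bundles_uniquely_stable}, rank-zero sheaves $\OO_V(k)$ by Lemma \ref{lem:simple_cases}(2), ideals of plane curves by Proposition \ref{prop:rank_one_old}, and the rank-two factor $F \in M(2,-1,-\tfrac{1}{2},\tfrac{5}{6})$ appearing in (ii)(d) and (ii)(e) by bootstrapping from case (i)(a). Accordingly the cases should be treated in the order (i)(a), (i)(b), (ii)(a)--(ii)(e). Additivity of $\ch_3$ across the destabilising sequence then independently recovers the bound on $e$, attained precisely when each factor is itself maximal.

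\emph{JH-property and main obstacle.} Each Jordan-H\"older factor identified above is rigid in its Chern class: line bundles and their shifts by Proposition \ref{prop:line_bundles_uniquely_stable}, plane sheaves by Lemma \ref{lem:simple_cases}(2), and the rank-two piece by case (i)(a). Hence the multiset of JH factors is intrinsic to $E$. The main obstacle is case (ii)(d), where three distinct destabilising sequences coexist and are supported on two nested walls (sequences B and C share the outer wall, sequence A sits on a strictly smaller inner wall). The JH-property must be verified along whichever wall is the first destabilising wall for the given $E$, and in particular one must check that the object $\OO(-3)[1]$ from sequence A remains $\nu$-stable on wall A, whereas on wall B the factor $F$ refines further via case (i)(a); the analysis of these two strata is carried out separately but by essentially the same tools. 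A secondary difficulty is making the finite numerical enumeration in the previous step genuinely exhaustive, where the pruning inequality $\Delta(F) + \Delta(G) \leq \Delta(E)$ together with Theorem \ref{thm:rank_zero_bound} is the decisive tool.
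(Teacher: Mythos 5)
Your overall strategy (force a wall via $Q_{\alpha,\beta}\geq 0$, enumerate destabilisers with Bertram's structure theorem, Lemma \ref{lem:higherRankBound} and the discriminant inequality, then read off the classification from the wall) is the paper's strategy, but there are two concrete gaps. First, the organisation of the induction. You propose to treat the cases in the order (i)(a), (i)(b), (ii)(a)--(ii)(e), but the two families are mutually recursive: a rank-two destabiliser of an object in case (i)(b) has twisted Chern character $(2,0,y,z)$ and its $\ch_3$ must be bounded by the case (ii) statements for arbitrary $y\leq 0$, while a rank-two destabiliser in case (ii)(e) has Chern character $(2,-1,y,z)$ and needs the case (i)(b) bound for arbitrary $y \le -\tfrac12$. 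So no linear ordering of the cases closes the argument, and your ``finite enumeration'' of destabilising Chern characters is not finite without these inductive $\ch_3$ bounds (the range of $y$ is only pinned down by combining the inductive bound on the rank-two piece with the center comparison $s(E,F)\leq s_Q(E)$ and Theorem \ref{thm:rank_zero_bound} on the rank-zero quotient, and then maximising a parabola in $y$). The paper resolves this by inducting on $\Delta(E)$, which strictly drops for the destabilising factors; some equivalent device is needed and is missing from your proposal.

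Second, case (ii)(b) with $\ch(E)=(2,0,-1,0)$ does not fit your paradigm at all. Here one shows there is \emph{no} semicircular wall, so at equality $E$ is not strictly semistable on any actual wall, and the displayed sequence $0\to T(-3)\to\OO(-1)^{\oplus 5}\to E\to 0$ is not a destabilising sequence in tilt stability but a resolution in the quiver heart of Theorem \ref{thm:macri_p3}, reached via Theorem \ref{thm:Bridgeland_stability_hyperbola}. Moreover $Q_{\alpha,\beta}(E)\geq 0$ alone only yields $e\leq\tfrac23$ for this class; the paper gets $e\leq 0$ from $e=\chi(E)=-h^1(E)-h^3(E)$ after killing $H^0$ and $H^2$ using the absence of walls and Serre duality. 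Neither the integrality/cohomological step nor the Bridgeland-stability step appears in your outline, so both the bound and the structure statement in (ii)(b) would be out of reach as written. (Similarly, in (ii)(a) equality forces $E\cong\OO^{\oplus 2}$ with no wall, via Proposition \ref{prop:line_bundles_uniquely_stable}, not via a destabilising sequence; and the tangency you invoke between $W_Q(E)$ and the hyperbola $\nu=0$ at equality is not the correct picture --- by Theorem \ref{thm:Bertram}(ii) every semicircular numerical wall meets that hyperbola at its top point regardless of $e$.) Your reading of case (ii)(d), with the two nested walls and the JH-property checked on the outermost actual wall, is consistent with the paper.
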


As an immediate Corollary, we can determine the maximal third Chern character for rank $-2$ objects as well.

\begin{cor}
\label{cor:rank_minustwo_bounds_p3}
Let $E \in \Coh^{\beta}(\P^3)$ be a tilt-semistable object with $\ch(E) = (-2,c,d,e)$.
\begin{enumerate}
\item If $c = -1$, then $d \geq \tfrac{1}{2}$ and
\[
e \leq \frac{d^2}{2} + d + \frac{5}{24}.
\]
\item If $c = 0$, then $d \geq 0$.
\begin{enumerate}
\item If $d = 0$ or $d = 1$, then $e \leq 0$.
\item If $d \geq 2$, then
\[
e \leq \frac{d^2}{2} - \frac{d}{2} + 1.
\]
\end{enumerate}
\end{enumerate}
\end{cor}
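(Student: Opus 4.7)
The approach is to reduce the rank $-2$ case to the rank $+2$ case settled in Theorem \ref{thm:sequences_maximal_ch3}, using the derived-dual duality of Proposition \ref{prop:tilt_derived_dual}. Given a tilt-semistable $E$ with $\ch(E) = (-2,c,d,e)$, I would first choose $(\alpha,\beta)$ at which $E$ is $\nu_{\alpha,\beta}$-semistable and $\nu_{\alpha,\beta}(E) \neq \infty$. Such $(\alpha,\beta)$ is available away from the unique numerical vertical wall $\beta = -c/2$ guaranteed by Theorem \ref{thm:Bertram}, since $E$ must remain tilt-semistable in at least one chamber adjacent to that wall. Proposition \ref{prop:tilt_derived_dual} then produces a $\nu_{\alpha,-\beta}$-semistable object $\tilde{E} \in \Coh^{-\beta}(\P^3)$ sitting in a distinguished triangle with $\RlHom(E,\OO)[1]$ and $T[-1]$, where $T$ is a dimension-zero sheaf of some length $\ell \geq 0$. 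A direct computation using $\ch(\RlHom(E,\OO)[1]) = (2,c,-d,e)$ and $\ch(T[-1]) = (0,0,0,-\ell)$ gives
\[
\ch(\tilde{E}) = \bigl( 2,\, c,\, -d,\, e + \ell \bigr).
\]

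Next, I would apply Theorem \ref{thm:sequences_maximal_ch3} directly to $\tilde{E}$, setting $d' := -d$ and $e' := e + \ell$. In case $c = -1$, the theorem forces $d' \leq -1/2$, i.e., $d \geq 1/2$, and yields $e' \leq \tfrac{(d')^2}{2} - d' + \tfrac{5}{24}$, which becomes $e + \ell \leq \tfrac{d^2}{2} + d + \tfrac{5}{24}$; the boundary subcase $d = 1/2$ with bound $5/6$ matches the same formula. Since $\ell \geq 0$, the stated inequality on $e$ follows. In case $c = 0$, the theorem forces $d \geq 0$, and its five subcases $d' = 0, -1, -2, -3$ and $d' \leq -4$ translate under $d' \mapsto -d$ into the subcases $d = 0, 1, 2, 3$ and $d \geq 4$ with matching bounds $e' \leq 0, 0, 2, 4, \tfrac{d^2}{2} - \tfrac{d}{2} + 1$ on $e + \ell$, and hence the same bounds on $e$.

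The only genuine technical hurdle is the first step: verifying that one can always shift $(\alpha,\beta)$ off the vertical wall while preserving tilt-semistability, so that Proposition \ref{prop:tilt_derived_dual} applies. Once this is secured, the remainder is simply bookkeeping of Chern characters through the duality against the bounds already proved in Theorem \ref{thm:sequences_maximal_ch3}, and I expect the final write-up to be only a few lines long.
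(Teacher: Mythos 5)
Your proposal is correct and follows essentially the same route as the paper: apply Proposition \ref{prop:tilt_derived_dual} to obtain a tilt-semistable $\tilde{E}$ with $\ch(\tilde{E}) = (2,c,-d,e+\ch_3(T))$, note $\ch_3(T)\geq 0$, and read off the bounds from Theorem \ref{thm:sequences_maximal_ch3}. The paper does not even comment on the minor point you flag about moving off the vertical wall, so your write-up is if anything slightly more careful; no changes needed.
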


\begin{proof}
By Proposition \ref{prop:tilt_derived_dual}, there is a sheaf $T$ supported in dimension $0$ and an object $\tilde{E}$ that is tilt-semistable together with a distinguished triangle
\[
T \to \tilde{E} \to \RHom(E, \OO)[1] \to T[1].
\]
We have $\ch(\tilde{E}) = (2, c, -d, e + \ch_3(T))$, and in particular, $e \leq \ch_3(\tilde{E})$. Applying the bounds in Theorem \ref{thm:sequences_maximal_ch3} finishes the proof.
\end{proof}

The strategy to prove Theorem \ref{thm:sequences_maximal_ch3} is induction on the discriminant. We start with a series of special cases that will either serve as base cases or require special arguments. The bounds in some of these special cases, where already dealt with in \cite{SS18:rank_two_surface_bounds}. Nevertheless, for the convenience of the reader, we include full proofs.

\begin{lem}
\label{lem:200_p3}
Let $E \in \Coh^{\beta}(\P^3)$ be a tilt-semistable rank two object with $\ch(E) = (2,0,0,e)$. Then $e \leq 0$, and if $e = 0$, then $E \cong \OO^{\oplus 2}$.
\end{lem}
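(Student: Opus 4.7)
The plan is to exploit the fact that the Bogomolov discriminant of $E$ vanishes, which forces tilt-semistability to be independent of the wall structure, and then to read off the bound on $e$ from the inequality $Q_{\alpha,\beta}(E) \geq 0$ of Theorem \ref{thm:p3_conjecture}.

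First I would compute $\Delta(E) = \ch_1(E)^2 - 2\ch_0(E)\ch_2(E) = 0$. By Theorem \ref{thm:Bertram}(vii), $E$ is then tilt-semistable at every $(\alpha,\beta)$ with $\beta \neq 0$ for which $E \in \Coh^{\beta}(\P^3)$. Observe next that the heart condition forces $\beta < 0$: a direct check using the slope conditions defining $\TT_\beta$ and $\FF_\beta$ shows that any object of class $(2,0,0,e)$ in $\Coh^{\beta}(\P^3)$ must have $\beta < 0$, since otherwise the rank constraints on $\HH^0(E)$ and $\HH^{-1}(E)$ together with $\ch_1(\HH^0(E)) = \ch_1(\HH^{-1}(E))$ would force a contradiction between $\mu^-(\HH^0(E)) > \beta$ and $\mu^+(\HH^{-1}(E)) \leq \beta$.

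Next I would carry out the explicit computation of the twisted Chern characters, which gives $\ch_1^{\beta}(E) = -2\beta$, $\ch_2^{\beta}(E) = \beta^2$, and $\ch_3^{\beta}(E) = e - \beta^3/3$. Plugging these into the definition and using $\Delta(E) = 0$,
\[
Q_{\alpha,\beta}(E) = 4\beta^4 - 6(-2\beta)\!\left(e - \tfrac{\beta^3}{3}\right) = 12\beta e.
\]
Theorem \ref{thm:p3_conjecture} then yields $12\beta e \geq 0$ for every $\beta < 0$ at which $E$ lies in the heart, which immediately gives $e \leq 0$.

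For the equality case $e = 0$, the Chern character of $E$ becomes $(2,0,0,0) = 2\ch(\OO)$. Proposition \ref{prop:line_bundles_uniquely_stable}(i) applied with $m = 2$ and $n = 0$ forces $E \cong \OO^{\oplus 2}$ up to shift; since $\ch_0(E) = 2 > 0$ only an even shift is possible, and the heart condition $E \in \Coh^{\beta}(\P^3)$ excludes nonzero even shifts, leaving $E \cong \OO^{\oplus 2}$. There is no real obstacle here: the argument is entirely numerical, and the only subtlety worth flagging cleanly is the verification that $\beta < 0$ is forced by the heart.
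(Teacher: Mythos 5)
Your proof is correct and follows essentially the same route as the paper: the heart condition forces $\beta<0$, the inequality $Q_{\alpha,\beta}(E)\geq 0$ reduces to $12\beta e\geq 0$ and hence $e\leq 0$, and the equality case is Proposition \ref{prop:line_bundles_uniquely_stable}. The appeal to Theorem \ref{thm:Bertram}(vii) is harmless but unnecessary, since $Q_{\alpha,\beta}(E)\geq 0$ already applies at the point where $E$ is assumed semistable.
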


\begin{proof}
The fact that $E \in \Coh^{\beta}(\P^3)$ implies $\beta < 0$. The inequality $Q_{\alpha, \beta}(E) \geq 0$ is equivalent to $e \leq 0$ for any $\alpha > 0$. For the fact that $e = 0$ implies $E \cong \OO^{\oplus 2}$ see Proposition \ref{prop:line_bundles_uniquely_stable}.
\end{proof}

\begin{lem}
\label{lem:2-1-1/2_p3}
Let $E \in \Coh^{\beta}(\P^3)$ be a tilt-semistable rank two object with $\ch(E) = (2,-1,-\tfrac{1}{2},e)$. Then $e \leq \tfrac{5}{6}$, and if $e = \tfrac{5}{6}$, then $E$ is destabilized in tilt stability by an exact sequence
\[
0 \to \OO(-1)^{\oplus 3} \to E \to \OO(-2)[1] \to 0.
\]
Moreover, $E$ satisfies the JH-property along the wall.
\end{lem}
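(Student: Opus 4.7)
The plan is to combine Macr\`i's inequality $Q_{\alpha,\beta}(E) \geq 0$ at the apex of the candidate destabilizing wall with a case analysis ruling out larger walls, then to identify the Jordan--H\"older factors on the wall.

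First I verify the numerical consistency $\ch(\OO(-1)^{\oplus 3}) + \ch(\OO(-2)[1]) = (3,-3,\tfrac{3}{2},-\tfrac{1}{2}) + (-1,2,-2,\tfrac{4}{3}) = (2,-1,-\tfrac{1}{2},\tfrac{5}{6})$, so the proposed sequence is compatible with $e = \tfrac{5}{6}$. Setting $v = (2,-1,-\tfrac{1}{2})$, a direct computation from $\nu_{\alpha,\beta}(v) = \nu_{\alpha,\beta}(\OO(-1))$ shows that the numerical wall $W = W(v,\ch(\OO(-1)))$ is the semicircle with center $s_W = -\tfrac{3}{2}$ and radius $\rho_W = \tfrac{1}{2}$; its apex lies at $(\alpha,\beta) = (\tfrac{1}{2},-\tfrac{3}{2})$. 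Evaluating $Q$ at the apex gives $Q_{1/2,-3/2}(E) = 10 - 12e$, so if $E$ is $\nu_{1/2,-3/2}$-semistable then Theorem \ref{thm:p3_conjecture} forces $e \leq \tfrac{5}{6}$.

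To justify semistability at the apex, I show no semicircular wall for $v$ has radius strictly greater than $\tfrac{1}{2}$; a wall contains $(\tfrac{1}{2},-\tfrac{3}{2})$ in its interior precisely when its radius exceeds $\tfrac{1}{2}$. For any destabilizing sequence $0 \to F \to E \to G \to 0$ on a wall of radius $> \tfrac{1}{2}$, if $\max(\ch_0(F),\ch_0(G)) \geq 3$ then Lemma \ref{lem:higherRankBound} together with $\Delta(E) = 3$ yields $\rho_W^2 \leq \tfrac{1}{4}$, a contradiction. Otherwise both $F,G$ have rank in $\{0,1,2\}$, and combining the Bogomolov inequality $\Delta(F),\Delta(G) \geq 0$ with $\Delta(F)+\Delta(G) \leq 3$ from Theorem \ref{thm:Bertram}(vi) and the explicit wall equation, a short case check over the admissible rank distributions $(0,2)$ and $(1,1)$ and the permitted splittings of $\Delta$ rules out every such wall.

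For the equality case $e = \tfrac{5}{6}$, one has $Q_{1/2,-3/2}(E) = 0$, which forces $E$ to be strictly $\nu_{1/2,-3/2}$-semistable and every Jordan--H\"older factor to be tilt-stable on $W$ of slope $0$ at the apex. Using Proposition \ref{prop:line_bundles_uniquely_stable} and Lemma \ref{lem:simple_cases} to classify tilt-stable objects on $W$ at the apex, the only candidates turn out to be $\OO(-1)$ and $\OO(-2)[1]$: other numerically plausible classes such as a twisted line ideal $\II_L$ or a plane sheaf $\OO_V(-1)$ are themselves strictly semistable at the apex and refine further via short exact sequences such as
\[
0 \to \OO(-1) \to \OO_V(-1) \to \OO(-2)[1] \to 0
\]
in $\Coh^{-3/2}(\P^3)$. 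Matching Chern characters then uniquely forces the JH multiset of $E$ to consist of three copies of $\OO(-1)$ and one copy of $\OO(-2)[1]$, establishing the JH-property along $W$. Grouping the three copies of $\OO(-1)$ as a single subobject, which one verifies using $\dim \Hom(\OO(-1), E) = \chi(E(1)) = 3$ combined with the corresponding cohomology vanishings, yields the desired destabilizing sequence.

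The main difficulty is the wall case analysis in the second paragraph, particularly for rank-$1$ destabilizers where several distinct wall geometries must be excluded. A secondary technical point is verifying in the equality step that no tilt-stable factors beyond $\OO(-1)$ and $\OO(-2)[1]$ survive on $W$, so that uniqueness of the JH multiset is genuine.
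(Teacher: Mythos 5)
Your proof is correct in outline but takes a genuinely different route from the paper's. The paper works on the vertical line $\beta = -1$, where $\ch^{-1}(E) = (2,1,-\tfrac{1}{2}, e - \tfrac{2}{3})$ has $\ch_1^{-1}(E) = 1$, the minimal positive value; since any destabilizing subobject would need $0 < \ch_1^{-1}(F) < 1$, no wall crosses that line, so $E$ is semistable along all of $\beta = -1$ and the single evaluation $Q_{0,-1}(E) = 5 - 6e \geq 0$ gives the bound. This one-line observation replaces your entire second paragraph: your case analysis over rank distributions $(0,2)$ and $(1,1)$ with the constraints $\Delta(F), \Delta(G) \geq 0$ and $\Delta(F) + \Delta(G) \leq 3$ does close up (I checked the cases: for $(1,1)$ one is forced to $\ch_1(F) \in \{0,-1\}$ and the crossing equation $\alpha^2 = -2(d+1)$ has no positive solution compatible with $\Delta(G) \geq 0$; for $(0,2)$ the class $(0,1,d)$ needs $d < -\tfrac{3}{2}$ to cross $\beta = -1$, which forces $\Delta(G) = 6 + 4d < 0$), but you should actually write it out, since it is the load-bearing step and you only assert it. For the equality case the paper simply cites an earlier classification result ({[}Sch15, Theorem 5.1{]}), whereas you reprove it by classifying tilt-stable objects of slope $0$ at the apex; your key observation that $\OO_V(-1)$ and similar classes are strictly semistable there and refine to $\OO(-1)$ and $\OO(-2)[1]$ is right, and this buys you a self-contained argument at the cost of checking a few more numerical classes (e.g.\ $\ch_{\leq 2} = (2,-2,1)$ and $(-2,4,-4)$, which Proposition \ref{prop:line_bundles_uniquely_stable} disposes of).

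Two small repairs. First, ``no wall has radius $> \tfrac{1}{2}$'' does not by itself give semistability at the apex: $E$ could a priori be semistable only strictly inside the semicircle of center $-\tfrac{3}{2}$ and radius $\tfrac{1}{2}$. You need the paper's extra sentence that $Q_{\alpha,\beta}(E) \geq 0$ at any point strictly inside that semicircle already forces $e < \tfrac{5}{6}$, so that case is harmless and the equality case can only occur for objects semistable on or outside the wall. Second, your final step deducing the subobject $\OO(-1)^{\oplus 3}$ from $\hom(\OO(-1),E) = \chi(E(1)) = 3$ requires the cohomology vanishings you allude to; it is cleaner to note that $\Ext^1(\OO(-1), \OO(-2)[1]) = H^2(\OO(1)) = 0$ and $\Ext^1(\OO(-1),\OO(-1)) = 0$, so the three factors $\OO(-1)$ in any Jordan--H\"older filtration can be pushed to the bottom and assembled into a direct sum.
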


\begin{proof}
The point $\alpha = 0$, $\beta = -1$ lies on the numerical wall with center $-\tfrac{3}{2}$ and radius $\tfrac{1}{2}$. A straightforward computation shows that for any point inside this numerical wall $Q_{\alpha, \beta}(E) \geq 0$ implies $e < \tfrac{5}{6}$. Therefore, we only have to deal with objects stable at or outside this wall.

We have $\ch^{-1}(E) = (2, 1, -\tfrac{1}{2}, e -\tfrac{2}{3})$. Since $\ch^{-1}_1(E)$ is the minimal positive value, and $\beta = -1$ is not the vertical wall, $E$ must be semistable for all $\alpha > 0$ when $\beta = -1$. Finally, $Q_{0, -1} \geq 0$ implies $e \leq \tfrac{5}{6}$. The remaining statement is a special case of \cite[Theorem 5.1]{Sch15:stability_threefolds}.
\end{proof}

\begin{lem}
\label{lem:20-1_p3}
Let $E \in \Coh^{\beta}(\P^3)$ be a tilt-semistable rank two object with $\ch(E) = (2,0,-1,e)$. Then $e \leq 0$. If $e = 0$, then $E$ fits into an exact sequence of the form
\[
0 \to T(-3) \to \OO(-1)^{\oplus 5} \to E \to 0.
\]
\end{lem}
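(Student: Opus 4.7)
\emph{Overview.} The strategy is to first show that $v := \ch_{\leq 2}(E) = (2, 0, -1)$ admits no actual semicircular walls in tilt stability, so $E$ is tilt-stable on the entire region $\{\alpha > 0,\ \beta < 0\}$; this then unlocks the Bridgeland stability/Macr\`i quiver heart machinery in a neighborhood of the hyperbola $\nu_{\alpha, \beta}(v) = 0$, and integrality of the resulting dimension vector forces $e \leq 0$.

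\emph{No walls for $v$.} For a candidate destabilizing subobject $F \subset E$ of class $(r, a, b)$ with quotient $G$, Theorem~\ref{thm:Bertram}(vi) requires $\Delta(F) + \Delta(G) \leq \Delta(E) = 4$ with equality iff $\ch_{\leq 2}(G) = 0$. I plan to check this case by case on $r$. For $r = 1$ a direct computation gives $\Delta(F) + \Delta(G) = 2a^2 + 2$, forcing $|a| \leq 1$; then $a = 0$ yields only the vertical wall, while $|a| = 1$ achieves equality but $\ch_{\leq 2}(G) \neq 0$, so no actual semicircular wall arises. The ranks $r \in \{-1, 0, 2, 3\}$ are handled by analogous enumerations, while for $r \geq 4$ Lemma~\ref{lem:higherRankBound} gives $\rho^2 \leq 1/(r(r-2))$ and a finite Bogomolov check leaves only degenerate walls concentrated at $\beta = \pm 1$. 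Finally, strict tilt-semistability would require JH-factors of class proportional to $(1, 0, -\tfrac{1}{2})$, a class with half-integer $c_2$ not realized by any coherent sheaf; hence every tilt-semistable $E$ is in fact tilt-stable throughout $\{\alpha > 0,\ \beta < 0\}$.

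\emph{The bound $e \leq 0$.} A direct expansion yields $Q_{\alpha, \beta}(E) = 4(\alpha^2 + \beta^2 + 1 + 3\beta e)$, so Theorem~\ref{thm:p3_conjecture} at $(\alpha, \beta) = (\varepsilon, -1)$ with $\varepsilon \to 0$ gives $e \leq 2/3$. Now choose $\beta_0 \in (-\sqrt{10}/3,\ -1)$ and $\alpha_0 = \sqrt{\beta_0^2 - 1}$; this point lies on the hyperbola $\nu(v) = 0$ inside Macr\`i's region $\alpha < 1/3$, $\beta \in (-5/3, -1]$. By Theorem~\ref{thm:Bridgeland_stability_hyperbola}, $E$ is Bridgeland-stable in a neighborhood, and by Theorem~\ref{thm:macri_p3} some shift $E[k]$ lies in the quiver heart $\langle \OO(-2)[3], T(-3)[2], \OO(-1)[1], \OO \rangle$ with non-negative integer dimension vector. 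A linear algebra computation in $K_0(\P^3)$ expresses the class of $E$ in this basis as $(e,\ e-1,\ 3e-5,\ e)$; non-negativity for even $k$ forces $e \geq 5/3$ and hence $e \geq 2$ by integrality of $a_0 = e$, contradicting $e \leq 2/3$, while odd $k$ forces $e \leq 0$.

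\emph{The case $e = 0$ and main obstacle.} Here the dimension vector of $E[1]$ is $(0, 1, 5, 0)$, so $E[1]$ is a non-split extension $0 \to \OO(-1)^{\oplus 5}[1] \to E[1] \to T(-3)[2] \to 0$ in the quiver heart, with the ordering of simples fixed by stability. Rotating the corresponding distinguished triangle in $\Db(\P^3)$ produces a triangle $T(-3) \to \OO(-1)^{\oplus 5} \to E \to T(-3)[1]$, and since $\dim \Hom(T(-3), \OO(-1)) = h^0(\Omega^1(2)) = 6$, a generic defining extension class is represented by an injective sheaf map $T(-3) \hookrightarrow \OO(-1)^{\oplus 5}$ with cokernel $E$, yielding the desired short exact sequence. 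The principal difficulty will be the exhaustive wall analysis, since the destabilizing sequence itself corresponds to a limit of degenerate numerical walls accumulating at $(\alpha, \beta) = (0, -1)$ that must be distinguished from genuine semicircular walls before the Bridgeland machinery can be activated.
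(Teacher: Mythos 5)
Your proposal is correct in outline and shares the paper's skeleton: rule out semicircular walls for $v=(2,0,-1)$ in the region $\beta<0$, then feed the resulting tilt-stability on the hyperbola $\nu_{\alpha,\beta}(v)=0$ into Theorem \ref{thm:Bridgeland_stability_hyperbola} and Macr\`i's quiver heart (Theorem \ref{thm:macri_p3}) to read off the resolution from the dimension vector. Where you genuinely diverge is in the two middle steps. For the wall analysis, the paper exploits Theorem \ref{thm:Bertram}(ii) to reduce to the single line $\beta=-1$, where writing $\ch^{-1}_{\leq 2}(F)=(r,1,x)$ gives the uniform relation $(r-1)\alpha^2=2x$ and Bogomolov kills all ranks at once; your rank-by-rank enumeration via $\Delta(F)+\Delta(G)\leq\Delta(E)$ works (I checked $r=0,1,2$ close up as you predict, always landing on the equality case with $\ch_{\leq 2}(G)\neq 0$, or on an empty numerical wall), but it is considerably longer and your treatment of $r\geq 3$ is only gestured at — note that Lemma \ref{lem:higherRankBound} bounds the radius but does not by itself kill small walls, so you still end up localizing at $\beta=-1$ as the paper does. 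For the bound $e\leq 0$, the paper is more elementary: absence of walls gives $H^0(E)=H^2(E)=0$ (the latter via Serre duality), whence $e=\chi(E)=-h^1-h^3\leq 0$ without touching the quiver heart. Your route — $Q_{\varepsilon,-1}(E)\geq 0$ giving $e\leq\tfrac{2}{3}$, then integrality and non-negativity of the dimension vector $\pm(e,e-1,3e-5,e)$ forcing $e\geq 2$ or $e\leq 0$ — is also valid (I verified the dimension vector computation) and has the virtue of deriving the bound and the structure result from one mechanism, at the cost of invoking the heavier Bridgeland machinery already for the inequality.

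Three loose ends you should tighten. First, for $r=1$, $a=0$ the numerical wall $W(v,(1,0,b))$ is simply empty unless $b=-\tfrac{1}{2}$ (it is not ``the vertical wall''); the conclusion is unaffected. Second, the ordering of the extension in the quiver heart is not ``fixed by stability'' but by the vanishing $\Ext^1(\OO(-1)[1],T(-3)[2])=H^2(T(-2))=0$, which forces $\OO(-1)[1]^{\oplus 5}$ to be the subobject. Third, drop the word ``generic'': the map $T(-3)\to\OO(-1)^{\oplus 5}$ is injective for \emph{every} such $E$, because the absence of walls lets you take $\alpha\gg 0$ and apply Proposition \ref{prop:large_volume_limit} to conclude $E$ is a sheaf, so $\HH^{-1}(E)=\ker\bigl(T(-3)\to\OO(-1)^{\oplus 5}\bigr)=0$. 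The lemma asserts the exact sequence for all semistable $E$ with $e=0$, so genericity would not suffice.
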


\begin{proof}
The fact that $E \in \Coh^{\beta}(\P^3)$ implies $\beta < 0$. The equation $\nu_{0, \beta}(E) = 0$ holds if and only if $\beta = \pm 1$. If $E$ is destabilized at a semicircular wall, it must intersect the vertical line $\beta = -1$. We have $\ch^{-1}(E) = (2,2,0,e-\tfrac{2}{3})$. Assume we have such a wall induced by $0 \to F \to E \to G \to 0$ that contains a point $(\alpha, -1)$. Since the wall itself is not vertical, $\ch_1^{-1}(F)$ has to be an integer strictly in between $0$ and $2$, i.e., $\ch_1^{-1}(F) = 1$. We know $\ch^{-1}_{\leq 2}(F) = (r, 1, x)$, where $r \in \Z$ and $x \in \tfrac{1}{2} + \Z$. Then
\[
-\frac{\alpha^2}{2} = \nu_{\alpha, -1}(E) = \nu_{\alpha, -1}(F) = x - \frac{\alpha^2}{2}r.
\]
This simplifies to $(r - 1)\alpha^2 = 2x$. If $r \geq 2$, then $x > 0$ and $\Delta(F) \geq 0$ implies $x \leq \tfrac{1}{2r} \leq \frac{1}{4}$. There is no possible value for $x$ with these properties. If $r = 1$, then $x = 0$ which is not a valid value for $x$. If $r \leq 0$, then the quotient $E/F$ has positive rank. The same argument with $E/F$ instead of $F$ works.

Overall, there is no wall to the left of the unique vertical wall for $E$. If $H^0(E) \neq 0$, we get a non-trivial morphism $\OO \to E$ in contradiction to stability of $E$. If $H^2(E) \neq 0$, we can use Serre duality to get a non-trivial morphism $E \to \OO(-4)[1]$. However, such a morphism would induce a semicircular wall, and therefore, $H^2(E) = 0$.

The Todd class of $\P^3$ is given by
\[
\td(T_{\P^3}) = \left(1,2,\frac{11}{6},1\right).
\]
We get
\[
e = \chi(E) = - h^1(E) - h^3(E) \leq 0.
\]
To prove the second statement about the exact sequence, note that $\nu_{7/24, -25/24}(E) = 0$. By Theorem \ref{thm:p3_conjecture} and Theorem \ref{thm:Bridgeland_stability_hyperbola} the object $E$ or $E[1]$ is in the finite length category
\[
\langle \OO(-2)[3], T(-3)[2], \OO(-1)[1], \OO \rangle.
\]
The Chern character of $E$ directly implies that it has to be an extension between $T(-3)[1]$ and five copies of $\OO(-1)$.
\end{proof}

\begin{lem}
\label{lem:20-2_p3}
Let $E \in \Coh^{\beta}(\P^3)$ be a tilt-semistable rank two object with $\ch(E) = (2,0,-2,e)$. Then $e \leq 2$. If $e = 2$, then $E$ is destabilized in tilt stability by an exact sequence
\[
0 \to \OO(-1)^{\oplus 4} \to E \to \OO(-2)^{\oplus 2}[1] \to 0.
\]
\end{lem}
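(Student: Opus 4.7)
Since $\mu(E) = 0$, membership in $\Coh^{\beta}(\P^3)$ forces $\beta < 0$. A direct expansion using $\Delta(E) = 8$ gives
\[
Q_{\alpha,\beta}(E) \;=\; 8\alpha^2 + 8\beta^2 + 16 + 12\beta e,
\]
which at $(\alpha,\beta) = (\tfrac{1}{2}, -\tfrac{3}{2})$ simplifies to $36 - 18e$. So the inequality $Q_{\alpha,\beta}(E) \geq 0$ of Theorem \ref{thm:p3_conjecture} at this point is exactly equivalent to $e \leq 2$. The point $(\tfrac{1}{2}, -\tfrac{3}{2})$ sits at the apex of the numerical wall $W$ with center $-\tfrac{3}{2}$ and radius $\tfrac{1}{2}$; the equation $\alpha^2 + (\beta+\tfrac{3}{2})^2 = \tfrac{1}{4}$ is exactly the wall induced by the proposed subobject $\OO(-1)^{\oplus 4}$. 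The plan is therefore to show that $E$ is $\nu_{\alpha,-3/2}$-semistable for every $\alpha > \tfrac{1}{2}$ and then let $\alpha \to \tfrac{1}{2}^+$.

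Because walls for $E$ are semicircles whose apex lies on the hyperbola $\nu_{\alpha,\beta}(E) = 0$, every wall $W(s,\rho)$ satisfies $s^2 = \rho^2 + 2$, so it suffices to rule out walls with $\rho > \tfrac{1}{2}$, equivalently with $s < -\tfrac{3}{2}$. For any destabilizer (subobject or quotient) of absolute rank at least $4$, Lemma \ref{lem:higherRankBound} already forces $\rho^2 \leq \Delta(E)/(4\cdot 4 \cdot 2) = \tfrac{1}{4}$. What remains are the rank pairs $(\ch_0(F), \ch_0(G)) \in \{(-1,3), (0,2), (1,1), (2,0), (3,-1)\}$. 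Each of these I would dispatch by combining the two Bogomolov inequalities on $F$ and $G$, the refined bound $\Delta(F) + \Delta(G) \leq \Delta(E) = 8$ from Theorem \ref{thm:Bertram}(vi), and the wall condition $s = (\ch_0(F) + \ch_2(F))/\ch_1(F) < -\tfrac{3}{2}$ to reduce to finitely many numerical candidates and rule them out. The main obstacle is the rank-zero case: these numerical constraints do admit the candidate $\ch_{\leq 2}(F) = (0,1,-2)$, which would give a wall of radius $\sqrt{2}$ centered at $-2$. Here I would invoke Lemma \ref{lem:simple_cases}(2): any tilt-semistable object with Chern character $(0,1,d,\cdot)$ has the form $\II_{Z/V}(d + \tfrac{1}{2})$, and since $d = -2$ is an integer the required twist $-\tfrac{3}{2}$ is not a genuine line-bundle twist, so no such tilt-semistable object exists. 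The symmetric rank-zero quotient case is excluded identically.

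Once all walls of radius greater than $\tfrac{1}{2}$ are excluded, $E$ is tilt-semistable at $(\alpha, -\tfrac{3}{2})$ for every $\alpha > \tfrac{1}{2}$, and letting $\alpha \to \tfrac{1}{2}^+$ yields $e \leq 2$. For the equality case $e = 2$, the $Q$-locus $Q_{\alpha,\beta}(E) = 0$ coincides with the wall $W$, so $E$ is strictly $\nu_{\alpha,\beta}$-semistable along $W$ and every Jordan-H\"older factor has $\Delta = 0$. Bogomolov equality forces such a factor to have the Chern character of a line-bundle shift, and by Proposition \ref{prop:line_bundles_uniquely_stable} it is isomorphic to one. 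The only line-bundle shifts with $\nu_{1/2,-3/2} = 0$ are $\OO(-1)$ and $\OO(-2)[1]$, and the Chern-character identity
\[
4\,\ch(\OO(-1)) + 2\,\ch(\OO(-2)[1]) \;=\; (2,0,-2,2) \;=\; \ch(E)
\]
determines the multiplicities. Arguing on the JH filtration along $W$ as in \cite[Theorem 5.1]{Sch15:stability_threefolds} then produces the destabilization
\[
0 \to \OO(-1)^{\oplus 4} \to E \to \OO(-2)^{\oplus 2}[1] \to 0.
\]
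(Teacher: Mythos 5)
Your route to the inequality $e \le 2$ is genuinely different from the paper's and is workable. You anchor the analysis at $\beta=-\tfrac{3}{2}$ and exclude all walls of radius greater than $\tfrac{1}{2}$ via Lemma \ref{lem:higherRankBound}, the two Bogomolov inequalities, $\Delta(F)+\Delta(G)\le 8$, the center formula, and integrality; the paper instead works along the vertical line $\beta=-1$, where $\ch_1^{-1}(E)=2$ forces any destabilizer to satisfy $\ch_1^{-1}(F)=1$, so everything collapses to the one family $\ch_{\le 2}^{-1}(F)=(r,1,d)$ and a two-line contradiction between $\alpha^2=\tfrac{2d+1}{r-1}>0$ and $\Delta(F)\ge 0$ with $d\in\tfrac{1}{2}+\Z$. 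Your version costs a five-case rank analysis that you only sketch, but the cases do close up (your candidate $\ch_{\le 2}(F)=(0,1,-2)$ is indeed the only survivor in the rank-zero case, and it dies by Lemma \ref{lem:simple_cases}, or more simply by the integrality $\ch_2\equiv\tfrac{1}{2}\ch_1 \pmod{\Z}$, which rules it out for \emph{any} object of $\Db(\P^3)$). The paper's choice of $\beta=-1$ is simply more economical.

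The genuine gap is in the equality case. From ``$W_Q$ coincides with $W$'' you assert that every Jordan--H\"older factor of $E$ along $W$ has $\Delta=0$; that implication is not justified, and it is exactly where the content lies. For example, $\OO(-1)$ and $\II_C(1)$ for a twisted cubic $C$ have Chern characters summing to $(2,0,-2,2)$, equal tilt slope along $W$, and both satisfy $Q_{1/2,-3/2}=0$, yet $\Delta(\II_C(1))=6$; nothing in your argument excludes such a factor. (It is in fact excluded, but only because $\II_C(1)$ is itself strictly semistable on $W$ and hence not a JH factor; proving that every factor that is genuinely \emph{stable} on $W$ has $\Delta=0$ requires the convexity of $Q$ along walls from \cite[Appendix A]{BMS16:abelian_threefolds} combined with openness of stability of the factors --- or, as the paper does, a direct appeal to \cite[Theorem 5.1]{Sch15:stability_threefolds}, which is precisely the statement that objects destabilized on the wall where $Q$ vanishes identically are extensions of copies of $\OO(-1)$ and $\OO(-2)[1]$.) Once that input is supplied, your identification of the factors and of the multiplicities $4$ and $2$ is correct, and assembling them into the displayed short exact sequence is again the cited result; but as written, the step ``every Jordan--H\"older factor has $\Delta=0$'' is an assertion, not a proof.
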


\begin{proof}
Assume $e \geq 2$. We have $\ch^{-1}(E) = (2,2,-1, e - \tfrac{5}{3})$. We will show that there is no wall for $\beta = -1$. If there is any destabilizing subobject $F \subset E$ for some $\alpha > 0$ and $\beta = -1$, then $\ch^{-1}_1(F) = 1$. Let $\ch_{\leq 2}^{-1}(F) = (r,1,d)$. We may assume $r \geq 1$. If $r \leq 0$, then we can simply replace $F$ by $F/E$ in the following argument.

If $r = 1$, we are dealing with the vertical wall, but that is located at $\beta = 0$. Thus, $r \geq 2$. A straightforward computation shows that the wall occurs for
\[
\alpha^2 = \frac{2d + 1}{r-1}.
\]
This means $d > -\tfrac{1}{2}$. Furthermore, $\Delta(F) \geq 0$ implies
\[
d \leq \frac{1}{2r} < \frac{1}{2}.
\]
This is a contradiction to the fact that $d \in \tfrac{1}{2} + \Z$. The remaining statement is a special case of \cite[Theorem 5.1]{Sch15:stability_threefolds}.
\end{proof}

\begin{lem}
\label{lem:20-3_p3}
Let $E \in \Coh^{\beta}(\P^3)$ be a tilt-semistable rank two object with $\ch(E) = (2,0,-3,e)$. Then $e \leq 4$. If $e = 4$, then $E$ is is destabilized in tilt stability by one of the following of the three following sequences
\[
0 \to \OO(-1)^{\oplus 3} \to E \to \OO(-3)[1] \to 0,
\]
\[
0 \to F \to E \to O_V(-2) \to 0,document
\]
\[
0 \to \OO_V(-2) \to E \to F \to 0,
\]
where $V \subset \P^3$ is a plane and $F \in M(2,-1,-\tfrac{1}{2},\tfrac{5}{6})$. Moreover, any semistable $E$ satisfies the JH-property along these walls.
\end{lem}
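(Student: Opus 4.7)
The plan is to follow the tilt-stability wall analysis used in Lemmas \ref{lem:200_p3}--\ref{lem:20-2_p3}. First I would carry out the numerical setup by computing
\[
Q_{\alpha,\beta}(E) = 12\bigl(\alpha^2 + \beta^2 + e\beta + 3\bigr),
\]
so that the locus $W_Q = \{Q_{\alpha,\beta}(E) = 0\}$ is the semicircle $\alpha^2 + (\beta + \tfrac{e}{2})^2 = \tfrac{e^2}{4} - 3$; for $e = 4$ this has center $-2$ and radius $1$. Using the formulas $s = (3r+2d)/(2c)$ and $\rho^2 = (3r+2d)^2/(4c^2) - 3$ for the center and radius of the wall induced by a destabilizing class $F = (r, c, d)$, together with Bogomolov's inequality for both $F$ and $G = E - F$, parity constraints on $\ch_2$, and the bound $\Delta(F) + \Delta(G) \leq \Delta(E) = 12$ from Theorem \ref{thm:Bertram}(vi), I would enumerate all semicircular walls in the half-plane $\beta < 0$ and conclude that exactly two occur: $W_1$ of center $-2$ and radius $1$, induced by classes of the form $\OO(-1)^{\oplus k}$ and $\OO(-3)[1]$; and $W_2$ of center $-5/2$ and radius $\sqrt{13}/2$, induced by $\OO_V(-2)$ for a plane $V\subset\P^3$ and classes of the form $(2,-1,-\tfrac{1}{2},\cdot)$.

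To bound $e$, I would consider destabilizations at $W_1$ and $W_2$ separately. At $W_2$, a destabilizing sequence $0 \to A \to E \to B \to 0$ has $\{\ch_{\leq 2}(A), \ch_{\leq 2}(B)\} = \{(0,1,-\tfrac{5}{2}), (2,-1,-\tfrac{1}{2})\}$; Lemma \ref{lem:simple_cases}(2) bounds $\ch_3$ of the rank-zero piece by $\tfrac{19}{6}$ and Lemma \ref{lem:2-1-1/2_p3} bounds $\ch_3$ of the rank-two piece by $\tfrac{5}{6}$, so $e \leq 4$. At $W_1$, the destabilizing subobject is $\OO(-1)^{\oplus k}$ for some $k \in \{1,2,3\}$; for $k \geq 2$, Theorem \ref{thm:rank_zero_bound} applied to the rank-zero quotient of class $(0,2,-4,e+\tfrac{1}{3})$ gives $e \leq 4$. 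For $k = 1$, I would first show $H^0(E) = 0$, since $\OO \subset E$ would destabilize $E$ throughout the region $\beta<0$ (a direct calculation gives $\nu_{\alpha,\beta}(\OO) - \nu_{\alpha,\beta}(E) = 3/(-2\beta)$, positive for $\beta<0$); then the rank-one quotient of class $(1,1,-\tfrac{7}{2},e+\tfrac{1}{6})$ is forced to be $\II_C(1)$ for a non-planar degree-four curve $C$, which has arithmetic genus $g \leq 1$, giving $e = g + 3 \leq 4$. Finally, if neither $W_1$ nor $W_2$ destabilizes $E$, the semistability chamber of $E$ extends through both and into the region $\{Q < 0\}$, contradicting Theorem \ref{thm:p3_conjecture}. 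Hence $e \leq 4$ in all cases.

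For the equality case $e = 4$, the curve $W_Q$ coincides with $W_1$ and $E$ must be destabilized at $W_1$ or $W_2$. A destabilization at $W_2$ is the equality case of the argument above: Lemma \ref{lem:simple_cases}(2) forces the rank-zero factor to be $\OO_V(-2)$, and Lemma \ref{lem:2-1-1/2_p3} forces the rank-two factor to lie in $M(2,-1,-\tfrac{1}{2},\tfrac{5}{6})$, giving sequences (ii) or (iii) according to the direction. For a destabilization at $W_1$, the relevant $\Ext$-long exact sequences show that in every subcase of the bound argument $\hom(\OO(-1),E) = 3$ (in particular for $C$ an elliptic quartic, $h^0(\II_C(2)) = 2$ contributes exactly what is needed), so one obtains a morphism $\OO(-1)^{\oplus 3} \to E$ which is a monomorphism in $\Coh^\beta(\P^3)$; the quotient has Chern character $(-1,3,-\tfrac{9}{2},\tfrac{9}{2})$ and vanishing discriminant, so by Proposition \ref{prop:line_bundles_uniquely_stable} it is $\OO(-3)[1]$, yielding sequence (i). The JH-property in each case is immediate, since the JH-factors are uniquely determined by their Chern characters (the three $\OO(-1)$-summands in sequence (i) are mutually isomorphic, so their multiset is well-defined).

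The main obstacle I expect is reducing every destabilizing sequence at $W_1$ to the canonical sequence (i). A particularly tempting competing destabilization is $0 \to \OO(-1)^{\oplus 2} \to E \to \OO_S(-1) \to 0$ for a quadric surface $S$, since $\OO_S(-1)$ has Chern character $(0,2,-4,\tfrac{13}{3})$ saturating Theorem \ref{thm:rank_zero_bound} and lies on $W_1$. I plan to handle this by using the resolution $0 \to \OO(-3) \to \OO(-1) \to \OO_S(-1) \to 0$ in a pullback square with the $\OO(-1)^{\oplus 2}$-sequence; together with the vanishing $\Ext^1(\OO(-1),\OO(-1)) = H^1(\OO) = 0$, this produces a surjection $\OO(-1)^{\oplus 3} \to E$ with kernel $\OO(-3)$ and hence sequence (i) in $\Coh^\beta(\P^3)$.
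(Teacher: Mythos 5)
Your analysis of the outer wall (center $-\tfrac{5}{2}$, radius $\tfrac{\sqrt{13}}{2}$) matches the paper: there the destabilizing classes are forced to be $(2,-1,-\tfrac{1}{2})$ and $(0,1,-\tfrac{5}{2})$, and Lemma \ref{lem:2-1-1/2_p3} plus Lemma \ref{lem:simple_cases} give $e \leq \tfrac{5}{6} + \tfrac{19}{6} = 4$ with the equality case yielding sequences (ii) and (iii). Your computation of $Q_{\alpha,\beta}(E)$ and of the two numerical walls is also correct. However, the paper never needs to analyze the inner wall for the bound: it observes that $Q_{0,-1}(E)\geq 0$ is equivalent to $e\leq 4$, so any violating object must be destabilized at a wall meeting the ray $\beta=-1$ at some $\alpha>0$, and the inner wall (center $-2$, radius $1$) only touches $\beta=-1$ at $\alpha=0$. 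For the equality case it cites \cite[Theorem 5.1]{Sch15:stability_threefolds} for sequence (i). Your plan instead attacks the inner wall directly, and that is where the genuine gaps lie.

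Concretely: (a) In your $k=1$ case the quotient has class $(1,1,-\tfrac{7}{2},e+\tfrac{1}{6})$, i.e.\ after twisting $(1,0,-4,e+4)$, and the only bound available from the paper's toolkit is Proposition \ref{prop:rank_one_old}, which gives $e+4\leq 10$, i.e.\ $g\leq 3$ and $e\leq 6$, with equality only for plane quartics. Your claim that a non-planar degree-four curve has $p_a\leq 1$ is Castelnuovo's bound for \emph{integral non-degenerate} curves; the quotient here is a priori only a tilt-semistable object of that class, so the underlying subscheme may be reducible or non-reduced, and excluding $g=2$ (which would give $e=5$) requires a separate wall argument that you do not supply. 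Worse, the rank-one subobject need not be $\OO(-1)$: any $\II_Z(-1)$ has $\ch_{\leq 2}=(1,-1,\tfrac{1}{2})$ and $\Delta=0$, and taking $Z\neq\emptyset$ pushes $\ch_3$ of the quotient even higher. (b) Your assertion that the inner wall is induced only by $\OO(-1)^{\oplus k}$ omits several destabilizing classes lying on that numerical wall: rank-zero subobjects of class $(0,2,-4,x)$ (you treat $\OO_S(-1)$ only as a quotient), the class $(1,1,-\tfrac{7}{2})$ as a subobject, and rank-three subobjects, which are \emph{not} excluded by Lemma \ref{lem:higherRankBound} at this wall since $\rho_W^2 = 1 = \Delta(E)/(4\cdot 3\cdot 1)$ exactly. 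Similarly, "exactly two walls occur" and "$\hom(\OO(-1),E)=3$ in every subcase" are asserted rather than derived. The cleanest repair is to adopt the paper's reduction to walls crossing $\beta=-1$ for the bound, and to either invoke \cite[Theorem 5.1]{Sch15:stability_threefolds} or carry out a complete case analysis of the inner wall for sequence (i).
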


\begin{proof}
Assume $e \geq 2$. A straightforward computation shows that $Q_{0,-1}(E) \geq 0$ is equivalent to $e \leq 4$. Therefore, we only need to check this inequality for objects that destabilizes along the vertical ray $\beta = -1$. We have $\ch^{-1}(E) = (2,2,-2, e - \tfrac{8}{3})$. If there is any destabilizing subobject $F \subset E$ for some $\alpha > 0$ and $\beta = -1$, then $\ch^{-1}_1(F) = 1$. Let $\ch_{\leq 2}^{-1}(F) = (r,1,d)$. If $r \leq 0$, we replace $F$ by $E/F$ in the following argument. Therefore, we may assume $r \geq 1$. If $r = 1$, we are dealing with the vertical wall, but that is located at $\beta = 0$. Thus, $r \geq 2$. A straightforward computation shows that the wall occurs for
\[
\alpha^2 = \frac{2d + 2}{r - 1}.
\]
This means $d > -1$. Furthermore, $\Delta(F) \geq 0$ implies
\[
d \leq \frac{1}{2r} < \frac{1}{2}.
\]
Since $d \in \tfrac{1}{2} + \Z$, we have $d = -\tfrac{1}{2}$. Then $\Delta(E/F) \geq 0$ implies $r = 2$. Overall, we have showed $\ch_{\leq 2}(F) = (2, -1, -\tfrac{1}{2})$ and by Lemma \ref{lem:2-1-1/2_p3} we know $\ch_3(F) \leq \tfrac{5}{6}$. Since $\ch_{\leq 2}(E/F) = (0,1,-\tfrac{5}{2})$, Lemma \ref{lem:simple_cases} implies $\ch_3(E/F) \leq \tfrac{19}{6}$. Overall, this means $e \leq 4$. Moreover, in case of equality we have $\ch_3(F) = \tfrac{5}{6}$, and $\ch_3(E/F) = \tfrac{19}{6}$. By Lemma \ref{lem:simple_cases} we get $E/F = \OO_V(-2)$.

The first exact sequence in the statement is giving the smallest wall by a special case of \cite[Theorem 5.1]{Sch15:stability_threefolds}.
\end{proof}

This finishes the special cases. For the rest of the section, we deal with the induction step for the general case.

\begin{lem}
\label{lem:no_rank_3_walls}
Let $E \in \Coh(\P^3)$ be a tilt-semistable object with $\ch(E) = (2,c,d,e)$. Assume that either
\begin{enumerate}
\item $c = -1$, $d \leq -\tfrac{3}{2}$, and $e \geq \tfrac{d^2}{2} - d + \tfrac{5}{24}$, or
\item $c = 0$, $d \leq -4$, and $e \geq \frac{d^2}{2} + \frac{d}{2} + 1$.
\end{enumerate}
Then $E$ is destabilized along a semicircular wall by a subobject or quotient of rank at most two. 
\end{lem}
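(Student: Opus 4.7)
The plan is to combine the $Q$-inequality of Theorem \ref{thm:p3_conjecture} with the rank-bound Lemma \ref{lem:higherRankBound}. Since $E$ is a rank-two sheaf, Proposition \ref{prop:large_volume_limit} gives that $E$ is tilt-semistable for $\alpha \gg 0$ at $\beta < \mu(E)$, while Theorem \ref{thm:p3_conjecture} forbids tilt-semistability wherever $Q_{\alpha,\beta}(E) < 0$, i.e., in the interior of the numerical wall $W_Q$. The nested wall theorem (Theorem \ref{thm:Bertram}) then forces the existence of an actual semicircular wall $W$ of $E$ whose closed semidisk contains $W_Q$; in particular $\rho_W \geq \rho_Q$.

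Next, suppose for contradiction that the destabilizer $F$ at $W$ (subobject or quotient) has $\ch_0(F) \geq 3$. If instead a subobject or quotient had $\ch_0 \leq -1$, the complementary object would then have $\ch_0 \geq 3$, so this case is symmetric. By Lemma \ref{lem:higherRankBound},
\[
\rho_W^2 \leq \frac{\Delta(E)}{4 \ch_0(F)(\ch_0(F) - 2)} \leq \frac{\Delta(E)}{12},
\]
which combined with $\rho_W \geq \rho_Q$ yields $\rho_Q^2 \leq \Delta(E)/12$.

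The contradiction now follows by directly computing $\rho_Q$. In case (ii), the wall $W_Q$, defined by $\nu_{\alpha,\beta}(E) = \nu_{\alpha,\beta}(\ch_1, 2\ch_2, 3\ch_3)$, has center $3e/(2d)$ and radius $\rho_Q^2 = d + 9e^2/(4d^2)$. Since $\Delta(E) = -4d$, the inequality $\rho_Q^2 > \Delta(E)/12$ reduces, under the hypothesis $e \geq \tfrac{d^2}{2} + \tfrac{d}{2} + 1$, to the polynomial inequality
\[
27(d^2 + d + 2)^2 + 64 d^3 > 0,
\]
which one verifies for $d \leq -4$ either by monotonicity or by direct evaluation. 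Case (i) is treated analogously with $\Delta(E) = 1 - 4d$, producing a comparable quartic inequality in $d$ valid for $d \leq -\tfrac{3}{2}$. The main obstacle is carrying out the explicit wall-equation computation in case (i), where the nonzero $c = -1$ introduces cross-terms making $\rho_Q$ less transparent; once $\rho_Q$ is in hand, verifying the polynomial inequalities is elementary.
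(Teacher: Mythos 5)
Your proposal follows exactly the paper's argument: any actual wall destabilizing $E$ must enclose the numerical wall $W_Q$ (so $\rho_W \geq \rho_Q$), while Lemma \ref{lem:higherRankBound} forces $\rho_W^2 \leq \Delta(E)/12$ for a destabilizer of rank $\geq 3$, so it suffices to check $\rho_Q^2 > \Delta(E)/12$. Your computation in case (ii) is correct and agrees with the paper's: $s_Q = 3e/(2d)$, $\rho_Q^2 = d + 9e^2/(4d^2)$ is increasing in $e>0$, and at the minimal $e$ the inequality $\rho_Q^2 > \Delta(E)/12 = -d/3$ is exactly your $27(d^2+d+2)^2 + 64d^3 > 0$, which holds for $d \leq -4$. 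The only incomplete piece is case (i), which you defer; for the record it closes the same way: with $e \geq \tfrac{d^2}{2}-d+\tfrac{5}{24}$ one finds $s_Q = (d+6e)/(4d-1)$ and $\rho_Q^2 \geq \bigl(144d^4-32d^3+24d^2-24d+5\bigr)/\bigl(16(4d-1)^2\bigr)$, whence $\rho_Q^2 - \Delta(E)/12 \geq \bigl(108d^2-68d+11\bigr)(2d+1)^2/\bigl(48(4d-1)^2\bigr) > 0$ for $d \leq -\tfrac{3}{2}$, since $108d^2-68d+11$ has negative discriminant. So nothing is missing conceptually, but a complete write-up must carry out that computation rather than assert it.
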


\begin{proof}
\begin{enumerate}
\item Assume $c = -1$, $d \leq -\tfrac{3}{2}$, and $e \geq \tfrac{d^2}{2} - d + \tfrac{5}{24}$. Then the radius  of $Q_{\alpha, \beta}(E) = 0$ is bounded from below by
\[
\rho_Q^2 \geq \frac{144d^4 - 32d^3 + 24d^2 - 24d + 5}{16(4d - 1)^2}.
\]
We can compute
\[
\rho_Q^2 - \frac{\Delta(E)}{12} \geq \frac{(108d^2 - 68d + 11)(2d + 1)^2}{48(4d - 1)^2} > 0.
\]
We conclude by Lemma \ref{lem:higherRankBound}.
\item Assume $c = 0$, $d \leq -4$, and $e \geq \frac{d^2}{2} + \frac{d}{2} + 1$. Then the radius  of $Q_{\alpha, \beta}(E) = 0$ is bounded from below by
\[
\rho_Q^2 \geq \frac{9d^4 + 34d^3 + 45d^2 + 36d + 36}{16d^2}. 
\]
We can compute
\[
\rho_Q^2 - \frac{\Delta(E)}{12} \geq \frac{(27d^3 + 37d^2 + 24d + 36)(d + 3)}{48d^2} > 0. 
\]
We conclude by Lemma \ref{lem:higherRankBound}. \qedhere
\end{enumerate}
\end{proof}

\begin{lem}
\label{lem:rank_one_walls}
Let $E \in \Coh(\P^3)$ be a tilt-semistable object with $\ch(E) = (2,c,d,e)$.
\begin{enumerate}
\item Assume $c = -1$ and $d \leq -\tfrac{3}{2}$. If $E$ is destabilized by a subobject $F$ of rank one, then
\[
e \leq \frac{d^2}{2} - d + \frac{5}{24}.
\]
In case of equality, we have $F \cong \OO(-1)$ and $E/F$ is the ideal sheaf of a plane curve.
\item Assume $c = 0$ and $d \leq -4$. If $E$ is destabilized by a subobject or quotient $F$ of rank one, then
\[
e < \frac{d^2}{2} + \frac{d}{2} + 1.
\]
\end{enumerate}
\end{lem}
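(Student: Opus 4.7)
The plan is to dissect a destabilizing short exact sequence $0 \to F \to E \to G \to 0$ in $\Coh^{\beta}(\P^3)$ with $\ch_0(F) = 1$; the rank-one quotient case in (ii) reduces to a subobject case via Proposition~\ref{prop:tilt_derived_dual}. Since $\HH^{-1}(F) \hookrightarrow \HH^{-1}(E) = 0$, the piece $F$ is a torsion-free rank-one sheaf, so $F \cong \II_{Z_F}(a)$ for some $a \in \Z$ and codimension-$\geq 2$ subscheme $Z_F$; after a similar reduction for $G$, one likewise has $G \cong \II_{Z_G}(c-a)$. Letting $\delta_F, \delta_G \in \Z_{\geq 0}$ denote the degrees of the one-dimensional parts of $Z_F, Z_G$, we have
\[
\delta_F + \delta_G = \tfrac{1}{2}\bigl(a^2 + (c-a)^2\bigr) - d.
\]

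The main tool is Proposition~\ref{prop:rank_one_old} applied to the slope-zero twists $F(-a)$ and $G(a-c)$, yielding
\[
\ch_3(F(-a)) \leq \tfrac{\delta_F(\delta_F + 1)}{2}, \qquad \ch_3(G(a-c)) \leq \tfrac{\delta_G(\delta_G+1)}{2},
\]
with equality precisely when the respective twist is the ideal sheaf of a plane curve. Summing and unwinding produces an upper bound on $e = \ch_3(F) + \ch_3(G)$ that is a convex quadratic in $\delta_F$ with $\delta_F + \delta_G$ fixed, hence is maximized at the boundary of the admissible range. Bertram's inequality Theorem~\ref{thm:Bertram}(vi), strict because $\ch_{\leq 2}(G) \neq 0$, restricts $a$ to a finite set: $a(a+1) < \tfrac{1}{2} - d$ in case (i) and $a^2 < -d$ in case (ii).

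For each extremal $(a, \delta_F)$ whose nominal upper bound exceeds the claimed value, the configuration forces either $F$ or $G$ to be a line bundle $\OO(k)$ of nonzero slope. When $\mu(\OO(k)) > \mu(E)$, that line bundle directly $\mu$-destabilizes $E$ (via the direct summand inclusion below); otherwise one computes $\Ext^1(G, F) = 0$ using the plane-curve sequence $0 \to \OO(-1) \to \II_C \to \OO_V(-\deg C) \to 0$ together with the vanishings $H^1(\OO(n)) = 0$ on $\P^3$ and $H^1(\OO_{\P^2}(n)) = 0$, so the extension splits, and a section of the other summand lifts to exhibit $\OO \hookrightarrow E$, which $2$-Gieseker destabilizes $E$ by the $\ch_2$-comparison and hence contradicts tilt-semistability at large $\alpha$ via Proposition~\ref{prop:large_volume_limit}. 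Configurations with $\mu(F) > \mu(E)$ (e.g.\ $a = 0$ in case (i), $a \geq 1$ in case (ii)) are handled the same way, since then $F$ itself already $\mu$-destabilizes $E$.

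In case (i) the surviving optimum equals $d^2/2 - d + 5/24$ and is realized when $\OO(-1) \hookrightarrow E$ is a destabilizing subobject with tilt-semistable quotient $E/\OO(-1)$ saturating Proposition~\ref{prop:rank_one_old}; the equality clause there identifies $E/\OO(-1)$ as the ideal sheaf of a plane curve of degree $\tfrac{1}{2} - d$. In case (ii) each surviving configuration yields a value strictly smaller than $d^2/2 + d/2 + 1$, giving the claimed strict inequality. The main obstacle is the case-by-case bookkeeping in the finite optimization over $(a, \delta_F)$ combined with the $\Ext^1$-vanishing verifications needed to rule out the numerically dominant extrema; each individual step is a direct calculation.
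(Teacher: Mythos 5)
There is a genuine gap: your only constraint on the twist $a$ is Bertram's inequality $\Delta(F)+\Delta(G)\leq\Delta(E)$, which permits $|a|$ up to roughly $\sqrt{-d}$, and your two exclusion mechanisms do not close all the resulting cases. Concretely, in case (i) take $a=-2$ (allowed by your constraint $a(a+1)<\tfrac12-d$ as soon as $d\leq-\tfrac52$) at the extremum $\delta_F=0$: the configuration $0\to\OO(-2)\to E\to\II_C(1)\to 0$ with $C$ a plane curve of degree $\tfrac52-d$ gives the nominal value $e=\tfrac{d^2}{2}-2d+\tfrac{17}{24}$, exceeding the claimed bound by $\tfrac12-d$. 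Here $\mu(\OO(-2))=-2<\mu(E)$, so the subobject does not slope-destabilize; and $\Ext^1(\II_C(1),\OO(-2))\cong H^2(\II_C(-1))^\vee\cong H^1(\OO_C(-1))^\vee\neq 0$ for $\deg C\geq 2$, so the extension need not split and no section of $\OO$ is produced. (The same problem recurs for all $a\leq -2$ in case (i), and in case (ii) for $|a|\geq 2$, which your bound $a^2<-d$ allows once $d\leq -5$.) The reason these configurations cannot occur is not homological but geometric: the paper shows $Q_{0,-2}(E)<0$ (resp.\ $Q_{0,-1}(E)<0$ in case (ii)), so any actual destabilizing wall must enclose the region $\{Q_{\alpha,\beta}(E)<0\}$ and hence contain a point with $\beta=-2$ (resp.\ $\beta=-1$); on that vertical line $\ch_1^{\beta}(F)$ must lie strictly between $0$ and $\ch_1^{\beta}(E)$, which forces $\ch_1(F)\in\{-1,0\}$ in case (i) and yields an immediate contradiction in case (ii). For instance, the wall $W(E,\OO(-2))$ has its rightmost endpoint exactly at $\beta=-2$, so it is disqualified — a fact invisible to your argument. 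The paper then also uses $s(E,F)\leq s_Q(E)$ to bound the admissible $\delta$ before running the parabola optimization; without some such wall-geometry input your finite optimization does not terminate in the claimed bound.

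Two secondary issues: the quotient $G$ of a rank-one subobject in $\Coh^{\beta}(\P^3)$ need not be a sheaf ($\HH^{-1}(G)$ can be nonzero), so the identification $G\cong\II_{Z_G}(c-a)$ is unjustified; the paper avoids this by applying Proposition \ref{prop:rank_one_old} to the tilt-semistable object $G$ directly, which requires no sheaf-theoretic structure. Also, your splitting arguments implicitly need $F\into E$ to be an injection of sheaves, which again presupposes $\HH^{-1}(G)=0$.
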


\begin{proof}
\begin{enumerate}
\item Assume $c = -1$ and $e \geq \tfrac{d^2}{2} - d + \tfrac{5}{24}$. We can compute
\[
Q_{0, -2}(E) \leq 4d^2 - 20d - 18e + 4 \leq -\frac{(10d - 1)(2d + 1)}{4} < 0.
\]
Thus, any wall destabilizing $E$ must contain a point $(\alpha, -2)$. In particular,
\[
0 < \ch_1^{-2}(F) = \ch_1(F) + 2 < \ch_1^{-2}(E) = 3.
\]
This implies $\ch_1(F) \in \{-1, 0\}$. For proving the bound, we can assume $\ch_1(F) = -1$ and $\ch(F) = (1, 0, -y, z) \cdot \ch(\OO(-1))$ for some $y \geq 0$. If $\ch_1(F) = 0$, the same argument will work when $F$ is replaced by $E/F$. By Proposition \ref{prop:rank_one_old} we know
\[
z \leq \frac{y(y+1)}{2}.
\]
We can compute
\begin{align*}
s_Q(E) &= \frac{d + 6e}{4d - 1} \leq \frac{12d^2 - 20d + 5}{16d - 4}, \\
s(E, F) & = d + 2y - 1.
\end{align*}
Since $s(E, F) \leq s_Q(E)$, we have
\[
y \leq \frac{4d^2 - 1}{8 - 32d} < -\frac{d}{2} - \frac{1}{4}.
\]
Using Proposition \ref{prop:rank_one_old} on the quotient $E/F$ leads to
\[
e \leq \frac{d^2}{2} + dy + \frac{y^2}{2} - d + z + \frac{5}{24} \leq \frac{d^2}{2} + dy + y^2 - d + \frac{y}{2} + \frac{5}{24}.
\]
This is a parabola in $y$ with minimum at $y = -\tfrac{d}{2} - \tfrac{1}{4}$. This means the maximum occurs at $y = 0$, where we get
\[
e \leq \frac{d^2}{2} - d + \frac{5}{24}.
\]
In case of equality, we must have $\ch(F) = \ch(\OO(-1))$. By Proposition \ref{prop:line_bundles_uniquely_stable}, we get $F \cong \OO(-1)$. Proposition \ref{prop:rank_one_old} implies that $E/F$ is the ideal sheaf of a plane curve.

If instead $\ch_0(F) = 0$, then $E/F \cong \OO(-1)$ and $F \cong \II_C$ for a plane curve $C$. Then $\Ext^1(\OO(-1), \II_C) = 0$ implies that $E$ is just a direct sum.
\item Assume $c = 0$ and $e \geq  \tfrac{d^2}{2} + \tfrac{d}{2} + 1$. We can compute
\[
Q_{0, -1}(E) \leq 4d^2 - 4d - 12e \leq -2(d+2)(d+3) < 0.
\]
Thus, any wall destabilizing $E$ must contain a point $(\alpha, -1)$. In particular,
\[
0 < \ch_1^{-1}(F) = \ch_1(F) + 1 < \ch_1^{-1}(E) = 2.
\]
This means $\ch_1(F) = 0$, a contradiction to the fact that we are not dealing with the vertical wall. \qedhere
\end{enumerate}
\end{proof}

\begin{proof}[Proof of Theorem \ref{thm:sequences_maximal_ch3}]
The proof will be by induction on $\Delta(E)$. The start of the induction is done by Lemma \ref{lem:200_p3}, \ref{lem:2-1-1/2_p3}, \ref{lem:20-1_p3}, \ref{lem:20-2_p3}, and \ref{lem:20-3_p3}. The induction step will be by contradiction. The strategy is to show that there is no wall outside the semidisk $Q_{\alpha, \beta}(E) < 0$ and therefore, there is no wall for such an object unless we have equality in the claimed bound. By Lemma \ref{lem:no_rank_3_walls} we are able to infer that $E$ is destabilized along a semicircular wall $W$ induced by an exact sequence $0 \to F \to E \to G \to 0$, where $F$ is of rank $r \in \{0, 1, 2\}$. Note that $\Delta(F) < \Delta(E)$, and we intend to use the induction hypothesis on $F$ in case $r = 2$.
\begin{enumerate}
\item Assume $c = -1$, $d \leq -\tfrac{3}{2}$, and $e \geq \tfrac{d^2}{2} - d + \tfrac{5}{24}$.
\begin{itemize}
\item If $r = 1$, we can use Lemma \ref{lem:rank_one_walls} to get that $F = \OO(-1)$, and $G = \II_C$ for a plane curve $C$. Then there is a map $\OO(-1) \to \II_C$. An application of the Snake Lemma shows that there is an injective morphism $\OO(-1)^{\oplus 2} \into E$. This reduces to the case of rank two walls.
\item Up to exchanging $F$ and $G$ we can assume $r = 2$. At the end we will show that $F$ is indeed the subobject and not the quotient. As in the proof of Lemma \ref{lem:rank_one_walls}, we know that $Q_{0,-2}(E) < 0$. Moreover, we can compute
\[
Q_{0,-\tfrac{3}{2}}(E) = 4d^2 - 12d - 12e + \frac{9}{4} \leq -2d^2 - \frac{1}{4} < 0.
\]
Both $\ch_1^{-3/2}(F) > 0$ and $\ch_1^{-2}(F) < \ch_1^{-2}(E)$ together imply $\ch_1(F) = -2$. Therefore, we may assume that $\ch(F) = (2,0,y,z) \cdot \ch(\OO(-1))$ for some $y \leq 0$. If $y = 0$, then $z \leq 0$. Applying Lemma \ref{lem:simple_cases} to the quotient $G$ implies that $e \leq \tfrac{d^2}{2} - d + \tfrac{5}{24}$ with equality if and only if $G \cong \OO_V(d - \tfrac{1}{2})$ for a plane $V \subset \P^3$. Assume for a contradiction that $y \leq -1$. By induction we know that
\[
z \leq \frac{y^2}{2} + \frac{y}{2} + 1.
\]
We can compute 
\begin{align*}
s_Q(E) &= \frac{d + 6e}{4d - 1} \leq \frac{12d^2 - 20d + 5}{16d - 4}, \\
s(E, F) & = d - y - 1.
\end{align*}
Since $s(E, F) \leq s_Q(E)$, we have
\[
y \geq \frac{4d^2 - 1}{16d - 4} > \frac{d}{2} - \frac{1}{4}.
\]
If $d = -\tfrac{3}{2}$, then this means $y > -1$, a contradiction. Thus, we may assume $d \leq -\tfrac{5}{2}$. Using Theorem \ref{thm:rank_zero_bound} on the quotient $G$ leads to
\[
e \leq \frac{d^2}{2} - dy + \frac{y^2}{2} - d + z + \frac{5}{24} \leq \frac{d^2}{2} - dy + y^2 - d + \frac{y}{2} + \frac{29}{24}.
\]
This is a parabola in $y$ with minimum at $y = \tfrac{d}{2} - \tfrac{1}{4}$. This means the maximum occurs at $y = -1$, where we get
\[
e \leq \frac{d^2}{2} + \frac{41}{24} < \frac{d^2}{2} - d + \frac{5}{24}.
\]
Overall, we showed that the only case in which we can get $e = \tfrac{d^2}{2} - d + \tfrac{5}{24}$ is when $\ch(F) = 2\ch(\OO(-1))$. We can conclude by Proposition \ref{prop:line_bundles_uniquely_stable} that $F = \OO(-1)^{\oplus 2}$. If $\OO(-1)^{\oplus 2}$ is the quotient and not the subobject, then $\Ext^1(\OO(-1), \OO_V(d - \tfrac{1}{2})) = 0$ shows that $E$ is simply a direct sum.
\end{itemize}
\item Assume $c = 0$, $d \leq -4$, and $e \geq \tfrac{d^2}{2} + \tfrac{d}{2} + 1$. By Lemma \ref{lem:rank_one_walls}, we know that either $F$ or $G$ has to have rank two. Most of the argument is numerical, and for the moment we assume that $F$ has rank two. We will argue at the end that $F$ is indeed the subobject and not the quotient. As in the proof of Lemma \ref{lem:rank_one_walls}, we get $Q_{0,-1}(E) < 0$. Thus,
\[
0 < \ch_1^{-1}(F) = \ch_1(F) + 2 < \ch_1^{-1}(E) = 2.
\]
This means we can assume that $\ch(F) = (2,-1,y,z)$ for some $y \leq -\tfrac{1}{2}$. By induction, we know that
\[
z \leq \frac{y^2}{2} - y + \frac{5}{24}.
\]
We can compute
\begin{align*}
s_Q(E) &= \frac{3e}{2d} \geq \frac{3d^2 + 3d + 6}{4d}, \\
s(E, F) & = d - y.
\end{align*}
Since $s(E, F) \leq s_Q(E)$, we have
\[
y \geq \frac{d^2 - 3d - 6}{4d} > \frac{d}{2} + \frac{1}{2}.
\]
Using Theorem \ref{thm:rank_zero_bound} on the quotient $G$ leads to
\[
e \leq \frac{d^2}{2} - dy + \frac{y^2}{2} + z + \frac{1}{24} \leq \frac{d^2}{2} - dy + y^2 - y + \frac{1}{4}.
\] 
This is a parabola in $y$ with minimum at $y = \tfrac{d}{2} + \tfrac{1}{2}$. This means the maximum occurs at $y = -\tfrac{1}{2}$, where we get
\[
e \leq \frac{d^2}{2} + \frac{d}{2} + 1.
\]
Moreover, equality happens when $F \in M(2,-1,-\tfrac{1}{2},\tfrac{5}{6})$. By Lemma \ref{lem:simple_cases} we have $G \cong \OO_V(d + 1)$. We are left to show that $F$ is indeed a subobject. As before, the strategy will be to show that $\Ext^1(F, \OO_V(d + 1))$ vanishes. By Lemma \ref{lem:2-1-1/2_p3} we have a short exact sequence of sheaves.
\[
0 \to \OO(-2) \to \OO(-1)^{\oplus 3} \to F \to 0.
\]
The long exact sequence from applying the functor $\Hom( \cdot, \OO_V(d + 1))$ to this sequence immediately concludes the proof. \qedhere
\end{enumerate}
\end{proof}


\section{Geometric structure of the moduli spaces}

From the classification in the last section, we can deduce a geometric description of their moduli spaces.

\begin{cor}
\label{cor:moduli spaces}
\begin{enumerate}
\item We have $M(2,-1,-\tfrac{1}{2},\tfrac{5}{6}) \cong \P^3$ and $M(2,0,-1,0) \cong \P^5$.
\item The moduli space $M(2,0,-3,4)$ is the blow up of $\Gr(3,10)$ in a smooth subvariety isomorphic to $\P^3 \times \P^3$.
\item For $d \leq -\tfrac{3}{2}$ the moduli space $M(2,-1,d,\tfrac{d^2}{2} - d + \tfrac{5}{24})$ is a $\Gr(2,n)$-bundle over $\P^3$, where
\[
n = \binom{\frac{5}{2} - d}{2}. 
\]
\item For $d \leq -4$ the moduli space $M(2,0,d,\tfrac{d^2}{2} + \tfrac{d}{2} + 1)$ is a $\P^n$-bundle over the product $\P^3 \times \P^3$, where $n = d(d-2) - 1$.
\end{enumerate}
\end{cor}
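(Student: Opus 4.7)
The plan is to exploit the explicit destabilizing filtrations from Theorem \ref{thm:sequences_maximal_ch3}, together with the JH-property that guarantees uniqueness of Jordan-H\"older factors (up to order), to read off each moduli space from the extension data. In every case I would first establish the set-theoretic bijection, then upgrade it to an isomorphism of schemes using standard constructions of universal families combined with the properness of the moduli space from Theorem \ref{thm:moduli_bridgeland_stable_objects}.

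For case (1), every stable $E \in M(2,-1,-\tfrac{1}{2},\tfrac{5}{6})$ fits into an exact sequence $0 \to \OO(-2) \to \OO(-1)^{\oplus 3} \to E \to 0$ coming from the destabilizing triangle of Theorem \ref{thm:sequences_maximal_ch3}(1)(a), and $E$ determines the defining map up to the $\GL_3$-action on the target. Since $\Hom(\OO(-2), \OO(-1)^{\oplus 3}) = H^0(\OO(1))^{\oplus 3}$ and the $\GL_3$-orbit of an injective triple of linear forms is precisely the $3$-dimensional subspace they span, I obtain an identification with $\Gr(3, H^0(\OO(1))) \cong \P^3$. The analogous argument for $M(2,0,-1,0)$ uses $0 \to T(-3) \to \OO(-1)^{\oplus 5} \to E \to 0$ together with $\dim \Hom(T(-3), \OO(-1)) = \dim H^0(\Omega^1_{\P^3}(2)) = 6$ (computed from the twisted Euler sequence) to identify the moduli with $\Gr(5,6) \cong \P^5$.

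Cases (3) and (4) upgrade this to fiber bundle descriptions. For case (3), the destabilizing quotient gives a morphism $M \to (\P^3)^*$ sending $E$ to the plane $V$. The fiber over $V$ is $\Hom(\C^2, \Ext^1(\OO_V(d-\tfrac{1}{2}), \OO(-1)))/\GL_2$, which for injective maps is $\Gr(2, n)$; applying $\Hom(\OO_V(d-\tfrac{1}{2}), -)$ to $0 \to \OO(-1) \to \OO \to \OO_V \to 0$ identifies $\Ext^1(\OO_V(d-\tfrac{1}{2}), \OO(-1))$ with $H^0(V, \OO_V(\tfrac{1}{2}-d))$, of dimension $n = \binom{5/2-d}{2}$. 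Case (4) is parallel: the destabilizing sequence $0 \to F \to E \to \OO_V(d+1) \to 0$ defines a morphism $M \to M(2,-1,-\tfrac{1}{2},\tfrac{5}{6}) \times (\P^3)^* \cong \P^3 \times \P^3$, and applying $\Hom(\OO_V(d+1), -)$ to the resolution $0 \to \OO(-2) \to \OO(-1)^{\oplus 3} \to F \to 0$ from Lemma \ref{lem:2-1-1/2_p3} yields $\ext^1(\OO_V(d+1), F) = d(d-2)$, so the fiber is $\P^{d(d-2)-1}$.

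Case (2), the blow-up description of $M(2,0,-3,4)$, is the main obstacle. The generic stable $E$ fits into $0 \to \OO(-3) \to \OO(-1)^{\oplus 3} \to E \to 0$, giving (after the same $\GL_3$-quotient as in case (1)) a rational map to $\Gr(3, H^0(\OO(2))) = \Gr(3,10)$. This map degenerates precisely when the three quadrics share a common linear factor $\ell$: a Snake Lemma argument identifies the corresponding $E$ with the S-equivalent split extension $\OO_V(-2) \oplus F$, and this degeneration locus is naturally $\P^3 \times \P^3 \subset \Gr(3,10)$. I expect the blow-up along this locus to resolve the ambiguity between the three destabilizing sequences of Theorem \ref{thm:sequences_maximal_ch3}(2)(d), with the exceptional fiber over $(V,F)$ parameterizing the non-split extensions of types (ii) and (iii). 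The delicate part is producing a universal family on the blow-up and verifying scheme-theoretically that this yields the claimed isomorphism, which will require matching the relevant $\Ext^1$ dimensions to the rank of the normal bundle of $\P^3 \times \P^3$ in $\Gr(3,10)$.
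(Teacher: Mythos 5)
Your outline for cases (1), (3) and (4) follows the same route as the paper (parametrize via the destabilizing sequences of Theorem \ref{thm:sequences_maximal_ch3}, globalize with relative Ext-sheaves and Lange's Theorem \ref{thm:lange_base_change}), but the step that upgrades the bijection to an isomorphism is wrong as stated: properness of the moduli space does not make a bijective morphism an isomorphism (think of the normalization of a cuspidal curve). What the paper actually uses is that the constructed Grassmann/projective bundle is smooth and maps bijectively onto $M$, \emph{and} that $M$ itself is smooth; the latter is proved in every case by applying the functors $\RHom(-,\cdot)$ to the destabilizing sequence to get $\Ext^2(E,E)=0$ together with $\ext^1(E,E)=\dim$ of the candidate space, after which Zariski's main theorem in characteristic zero finishes. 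You never compute $\Ext^2(E,E)$, and without it the argument does not close. Two smaller omissions in the same vein: in (3) you must show that stability forces the subspace of $\Ext^1(\OO_V(d-\tfrac12),\OO(-1))$ to have dimension exactly two (the paper rules out dimension one by an octahedron-axiom argument producing a destabilizing quotient $E\onto\OO(-1)$), and conversely that every two-dimensional subspace yields a stable sheaf (this uses the JH-property to list the possible destabilizers above the wall). For $M(2,0,-1,0)$ your direct $\Gr(5,6)$ count agrees with the paper, which instead invokes the quiver description of Theorem \ref{thm:macri_p3} precisely to get the scheme-theoretic identification for free.

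Case (2) is a genuine gap, and you flag it yourself. The paper never builds a universal family on the blow-up. Instead it introduces the intermediate moduli space $M'$ of tilt-semistable objects in the chamber between $W_1=W(v,(2,-1,-\tfrac12))$ and $W_2=W(v,\OO(-1))$, shows $M'\cong\Gr(3,10)$ by repeating the case (1) argument with $\OO(-2)$ replaced by $\OO(-3)$, and identifies the locus of $M'$ destabilized at $W_1$ with $\P^3\times\P^3$ (Lemma \ref{lem:destab_locus}). The decisive computation is $\ext^1(F,\OO_V(-2))=1$: for each pair $(F,V)$ there is a \emph{unique} stable extension $0\to\OO_V(-2)\to E\to F\to 0$ on the inner side of $W_1$, and this uniqueness is what produces an honest birational \emph{morphism} $M(2,0,-3,4)\to M'$ (not merely a rational map to be resolved), contracting an irreducible locus with fibers $\P(\Ext^1(\OO_V(-2),F))\cong\P^{14}$ onto the smooth $\P^3\times\P^3$. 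Moishezon's Theorem \ref{thm:blow_up} then says that any such birational contraction between smooth proper algebraic spaces is the blow-up along the image of the exceptional locus, so no family on $\Bl_{\P^3\times\P^3}\Gr(3,10)$ and no comparison with the normal bundle is needed. Your proposed route would require exactly the scheme-theoretic verification you admit you cannot supply; the missing ideas are the wall-crossing morphism coming from the one-dimensional $\Ext^1$ and the appeal to Moishezon's criterion.
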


In order to proof this statement we need to recall some notation and known results. Let $f: X \to Y$ be a morphism between projective varieties, and let $F, G \in \Coh(X)$. For any $i \in \Z$ the relative Ext-sheaf is defined to be
\[
\lExt^i_f(F, G) := \mathbf{R}^i(f_* \lHom(F, \cdot))(G) = \HH^i(Rf_* \RlHom(F, G)).
\]
In \cite{Lan83:families_extensions} Lange constructs universal families of extensions of sheaves using these relative Ext-sheaves. However, the case of the Grassmann bundle requires a few steps beyond what Lange did.

\begin{thm}[{\cite[Theorem 1.4]{Lan83:families_extensions}}]
\label{thm:lange_base_change}
Let $y \in Y$ and assume that the base change morphism $\tau^i(y): \lExt^i_f(F, G) \otimes_Y \C(y) \to \Ext^i_{X_y}(F_y, G_y)$ is surjective.
\begin{enumerate}
    \item There is a neighborhood $U$ of $y$ such that the base change morphism $\tau^i(y')$ is an isomorphism for all $y' \in U$.
    \item The base change morphism $\tau^{i-1}(y)$ is surjective if and only if $\lExt^i_f(F, G)$ is locally free in a neighborhood of $y$.
\end{enumerate}
\end{thm}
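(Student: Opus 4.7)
The plan is to reduce the statement to Grothendieck's classical cohomology and base change theorem, applied to a perfect complex that universally computes $Rf_*\RlHom(F,G)$. The question is local on $Y$, so I first shrink and assume $Y = \Spec A$ is affine and Noetherian. Since $f$ is projective and $F$ is coherent, pick a bounded-above resolution $P^\bullet \to F$ by locally free sheaves of finite rank. Then $\RlHom(F,G)$ is represented by the complex $\HH^\bullet := \lHom^\bullet(P^\bullet, G)$ of $f$-flat coherent sheaves, its derived pushforward computes the relative Ext sheaves $\lExt^i_f(F,G)$, and its formation commutes with arbitrary base change $g: Y' \to Y$.

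Applying the Grothendieck construction to $\HH^\bullet$, after possibly shrinking $Y$ around $y$, one obtains a bounded complex $L^\bullet$ of finitely generated locally free $\OO_Y$-modules together with functorial isomorphisms
\[
R^i f_*(\HH^\bullet \otimes_{\OO_X} f^*M) \;\cong\; H^i(L^\bullet \otimes_{\OO_Y} M)
\]
for every quasi-coherent $\OO_Y$-module $M$. Taking $M = \OO_Y$ identifies $\lExt^i_f(F,G) = H^i(L^\bullet)$; taking $M = \kappa(y)$ identifies $\Ext^i_{X_y}(F_y,G_y) = H^i(L^\bullet \otimes \kappa(y))$; and the base change morphism $\tau^i(y)$ becomes the canonical comparison map $H^i(L^\bullet) \otimes \kappa(y) \to H^i(L^\bullet \otimes \kappa(y))$.

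With $L^\bullet$ in hand, both conclusions become a standard linear-algebra statement about a bounded complex of finite-rank locally free sheaves over a Noetherian scheme. Writing $d^i: L^i \to L^{i+1}$ for the differentials, surjectivity of $\tau^i(y)$ is equivalent to the rank of $d^i \otimes \kappa(y)$ attaining its locally minimal value at $y$; by upper semicontinuity of rank, this minimum is then achieved throughout an open neighborhood of $y$, which yields (i). For (ii), $H^i(L^\bullet)$ is locally free near $y$ if and only if both $d^i$ and $d^{i-1}$ have locally constant rank in that neighborhood; part (i) ensures this for $d^i$, and the condition for $d^{i-1}$ is exactly surjectivity of $\tau^{i-1}(y)$.

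The main obstacle is the construction of the universal complex $L^\bullet$ with the base change property, i.e. verifying that the hypercohomology of $\HH^\bullet$ is computed by a perfect complex compatibly with arbitrary pullbacks. Once this input is granted, the remaining argument is precisely Mumford's rank-semicontinuity trick from the proof of cohomology and base change, and this is the strategy carried out by Lange in the cited paper.
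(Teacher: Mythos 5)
The paper does not actually prove this statement; it is quoted from Lange's paper, so there is no internal argument to compare against. Your sketch follows the standard route, which is also Lange's: resolve $F$ by direct sums of twists $\OO_X(-n_j)$, observe that the resulting Hom-complex computes $\RlHom(F,G)$ compatibly with base change, replace its pushforward by a complex $L^\bullet$ of finite locally free $\OO_Y$-modules with $H^i(L^\bullet\otimes M)\cong\lExt^i_f(F,G\otimes f^*M)$ for all quasi-coherent $M$, and then run Mumford's exchange argument. Two points in the middle of your write-up need repair before this is a proof.

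First, you never invoke flatness of $F$ and $G$ over $Y$, but both are genuine hypotheses of Lange's theorem (suppressed in the paper's citation, and satisfied in all of its applications), and both are used exactly at the steps you assert without comment. The sheaf $\lHom(P^j,G)\cong (P^j)^\vee\otimes G$ is $Y$-flat only if $G$ is, and this flatness (together with $n_j\gg 0$ and relative Serre vanishing) is what makes $f_*\bigl(\lHom(P^j,G)\otimes f^*M\bigr)\cong f_*\lHom(P^j,G)\otimes M$ hold and hence produces the complex $L^\bullet$ with the universal base-change property; moreover the identification $H^i(L^\bullet\otimes\C(y))\cong\Ext^i_{X_y}(F_y,G_y)$ requires the restricted complex $P^\bullet|_{X_y}\to F_y$ to stay exact, i.e.\ $F$ flat over $Y$ --- otherwise the fibre complex computes Ext out of $\mathbf{L}i_y^*F$ rather than out of $F_y$. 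Second, your rank criterion is stated backwards: the fibrewise rank of a map of vector bundles is lower semicontinuous, so it automatically attains its local minimum at $y$, and the criterion as written is vacuous. The condition equivalent to surjectivity of $\tau^i(y)$ is that $\rk(d^i\otimes\C(y'))$ is locally constant near $y$, equivalently that $\cok(d^i)$ is locally free at $y$ (Mumford's lemma); with that correction the deductions of (i) and (ii) go through exactly as you indicate. A harmless further remark: $L^\bullet$ is only bounded below, since the locally free resolution of $F$ is in general infinite, but this does not affect the local analysis at any fixed $i$.
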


Note that Grauert's Theorem \cite[Corollary III.12.9]{Har77:algebraic_geometry} shows that if $\tau^i(y)$ is an isomorphism for all $y \in Y$ and the dimension of $\Ext^i_{X_y}(F_y, G_y)$ is independent of $y$, then $\lExt^i_f(F, G)$ is locally free. Therefore, Lange's theorem creates opportunities for descending induction on $i$.

\subsection{The case $c = -1$, $d = -\tfrac{1}{2}$}

The moduli space of three-dimensional quotients of $H^0(\OO(1))^{\vee}$ is given by $\P^3$, and let $M := M(2,-1,-\tfrac{1}{2},\tfrac{5}{6})$. We can define a function $\varphi: \P^3 \to M$ as follows. If $H^0(\OO(1))^{\vee} \onto U$ is a three-dimensional quotient, then we get a short exact sequence of sheaves
\[
0 \to \OO(-2) \to \OO(-1) \otimes U \to E \to 0.
\]
We set $\varphi(U) = E$. We will have to proof the following lemma.

\begin{lem}
\label{lem:2-1-1/2_moduli}
\begin{enumerate}
    \item The function $\varphi$ is well-defined, i.e., $E$ is slope-stable.
    \item The function $\varphi$ is bijective
    \item The function $\varphi$ is a morphism of schemes.
    \item The moduli space $M$ is smooth, and therefore, $\varphi$ is an isomorphism.
\end{enumerate}
\end{lem}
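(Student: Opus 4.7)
The plan is to handle the four parts in order; the technical heart is a single $\Ext$-computation against the length-$1$ resolution defining $E$, with the rest being cohomology book-keeping and a universal family construction on the Grassmannian.

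For (i), $E$ is first of all torsion-free: by the Auslander--Buchsbaum formula applied to $0\to\OO(-2)\to \OO(-1)\otimes U\to E\to 0$ one has $\mathrm{depth}\,E_p\geq 1$ at every point of $\P^3$. Slope-stability of this rank-$2$ sheaf of slope $-\tfrac{1}{2}$ then reduces to showing $H^0(E(-k))=0$ for all $k\geq 0$. Twisting the defining sequence by $\OO(-k)$, both $\OO(-1-k)$ and $\OO(-2-k)$ have vanishing $H^0$ and $H^1$ on $\P^3$ for $k\geq 0$, and the resulting long exact sequence gives the claim at once.

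For (ii), surjectivity of $\varphi$ follows from Lemma \ref{lem:2-1-1/2_p3} combined with Proposition \ref{prop:large_volume_limit}: any $E\in M$ is tilt-semistable for suitable $(\alpha,\beta)$ and hence admits a resolution of this shape. For injectivity I want to reconstruct the quotient $V^\vee\twoheadrightarrow U$ (with $V=H^0(\OO(1))$) canonically from $E$. Twisting the defining sequence by $\OO(1)$ identifies $U\cong H^0(E(1))$, while twisting by $\OO(2)$ gives $h^0(E(2))=11$ with higher cohomology vanishing, and presents the multiplication map $H^0(E(1))\otimes H^0(\OO(1))\to H^0(E(2))$ with a one-dimensional kernel. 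Under the canonical identification $U\otimes V\cong\Hom(U^\vee,V)$ this kernel is a rank-$3$ element encoding the inclusion $U^\vee\hookrightarrow V$, equivalently the quotient $V^\vee\twoheadrightarrow U$.

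For (iii), set $X=\Gr(3,V^\vee)\cong\P^3$ with its tautological rank-$3$ quotient $\QQ$ and universal subbundle $\QQ^\vee\hookrightarrow V\otimes\OO_X$. Combined with evaluation on the $\P^3$ factor, this produces a natural map $\OO_X\boxtimes\OO_{\P^3}(-2)\to \QQ\boxtimes\OO_{\P^3}(-1)$ on $X\times\P^3$ whose restriction to $\{U\}\times\P^3$ is the defining map for $\varphi(U)$. Fiberwise injectivity gives $X$-flatness of the cokernel $\EE$ with constant Hilbert polynomial, and by (i) every fiber is Gieseker-stable; the universal property of $M$ (Theorem \ref{thm:moduli_bridgeland_stable_objects}) then promotes $\EE$ to a morphism $X\to M$ realizing the set map $\varphi$.

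For (iv), apply $\RHom(-,E)$ to $0\to\OO(-2)\to\OO(-1)^{\oplus 3}\to E\to 0$. The cohomology computed in (ii) shows both $\RHom(\OO(-1)^{\oplus 3},E)$ and $\RHom(\OO(-2),E)$ are concentrated in degree $0$, of dimensions $9$ and $11$ respectively, so the induced long exact sequence forces $\Ext^i(E,E)=0$ for $i\geq 2$ and, combined with $\Hom(E,E)=\C$ from stability, gives $\ext^1(E,E)=3$. Hence $M$ is smooth of dimension $3$ at every point in the image of $\varphi$, which by (ii) is all of $M$. Since $\varphi$ is a bijective morphism between smooth projective irreducible varieties of the same dimension, Zariski's main theorem yields that it is an isomorphism. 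The most delicate step is (iii), where one has to nail down conventions for the universal family and check that the induced morphism truly matches the set map $\varphi$; the remaining steps reduce to the short cohomology and $\Ext$ calculation above.
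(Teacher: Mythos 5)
Your proposal is correct, and it reaches the conclusion by a noticeably more classical route than the paper. The paper stays inside tilt stability for the two delicate parts: stability of $\varphi(U)$ is proved by wall-crossing, using the JH-property along the wall $W$ to enumerate the possible destabilizing quotients (subsets of the factors $\OO(-1)^{\oplus 3}$, $\OO(-2)[1]$) and excluding each one; injectivity is deduced from the uniqueness of the Harder--Narasimhan factors of $E$ below $W$, which pins down the subobject $\OO(-1)^{\oplus 3}$ and hence the subspace of $H^0(\OO(1))$. You instead work directly with the sheaf resolution $0 \to \OO(-2) \to \OO(-1)\otimes U \to E \to 0$: stability becomes the elementary vanishing $H^0(E(-k))=0$ once torsion-freeness is known, and injectivity is obtained by recovering $U^{\vee}\subset H^0(\OO(1))$ intrinsically as the one-dimensional kernel of the multiplication map $H^0(E(1))\otimes H^0(\OO(1))\to H^0(E(2))$ --- a nice observation that the paper does not make. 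Your argument only invokes the derived-category machinery (Lemma \ref{lem:2-1-1/2_p3} plus Proposition \ref{prop:large_volume_limit}) for surjectivity, and your smoothness computation via $\RHom(-,E)$ applied to the resolution gives the stronger statement $\ext^1(E,E)=3$, $\Ext^{\geq 2}(E,E)=0$, matching $\dim\P^3$. The trade-off is that the paper's wall-crossing template transfers verbatim to the other cases of Corollary \ref{cor:moduli spaces}, where the destabilizing factors are not line bundles and no comparably clean resolution is available. Three small points to tighten: the Auslander--Buchsbaum bound only gives $\mathrm{depth}\,E_p\geq \dim\OO_p-1$, which is $0$ at codimension-one points, so you should add that the three linear forms have common zero locus of codimension $3$, whence $E$ is locally free in codimension $\leq 2$ and the depth estimate at the remaining point suffices for torsion-freeness; in the surjectivity step you should note (as the paper does) that the three forms extracted from a stable $E$ are linearly independent, since otherwise $\OO(-1)$ splits off as a quotient; and the universal property promoting the family $\EE$ to a morphism is that of the Gieseker moduli space $M$ as a coarse moduli space, not Theorem \ref{thm:moduli_bridgeland_stable_objects}, which concerns moduli of Bridgeland-semistable objects.
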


\begin{proof}
\begin{enumerate}
    \item By Theorem \ref{thm:sequences_maximal_ch3}, we know that there is only one wall for such objects $E$ given by a sequence
    \[
    0 \to \OO(-1)^{\oplus 3} \to E \to \OO(-2)[1] \to 0.
    \]
    Therefore, showing that $E = \varphi(U)$ is slope-stable is the same as showing that it is $\nu_{\alpha, \beta}$-semistable in a neighborhood above this wall $W$. If it is not semistable above $W$, then there is a destabilizing semistable quotient $E \onto G$. Clearly, $E$ is strictly-semistable along $W$. Therefore, such a quotient must satisfy $\nu_{\alpha, \beta}(E) = \nu_{\alpha, \beta}(G)$ for $(\alpha, \beta)$ along $W$. Since $E$ has the JH-property, we know that in any Jordan-H\"older filtration of $E$ there are three stable factors $\OO(-1)$ and one stable factor $\OO(-2)[1]$. This means that the stable factors of $G$ have to be a subset of these.
    
    A quotient $\OO(-2)[1]$ does not destabilizes $E$ above $W$ for purely numerical reasons. The vector space $\Hom(E, \OO(-1))$ is the kernel of the morphism $\Hom(\OO(-1) \otimes U, \OO(-1)) \to \Hom(\OO(-2), \OO(-1))$ which is injective. Thus, $G \not \cong \OO(-1)^{\oplus a}$ for $a \in \{1, 2, 3\}$. If $G$ is an extension between $\OO(-1)$ and $\OO(-2)[1]$, then the kernel is $\OO(-1)^{\oplus 2}$ and this does not destabilize $E$ above the wall. Similarly, if $G$ is an extension between $\OO(-1)^{\oplus 2}$ and $\OO(-2)[1]$, then the kernel is given by $\OO(-1)$ which does not destabilize $E$ above the wall.
    
    \item By Theorem \ref{thm:sequences_maximal_ch3} we know that any semistable $E$ fits into an exact sequence
    \[
    0 \to \OO(-1)^{\oplus 3} \to E \to \OO(-2)[1] \to 0.
    \]
    Giving such an extension is the same as giving an element in $\Ext^1(\OO(-2)[1], \OO(-1)^{\oplus 3}) = H^0(\OO(1))^{\oplus 3}$. In Theorem \ref{thm:sequences_maximal_ch3} we have already shown that this is the Harder-Narasimhan filtration of $E$ below the wall. The Harder-Narasimhan factors are unique. This means $E$ determines the subobject $\OO(-1)^{\oplus 3}$. However, the group $\GL(3)$ acts via automorphisms on $\OO(-1)^{\oplus 3}$ without changing the isomorphism class of $E$. This means we get a unique subspace of $H^0(\OO(1))^{\oplus 3}$. However, if this subspace is not of dimension three, then there is a destabilizing morphism $E \onto \OO(-1)$. This proves both surjectivity and injectivity.
    
    \item We will construct a family on $\P^3 \times \P^3$ whose fibers are in bijection with objects in $M$. The universal property of $M$ then shows that $\varphi$ is a morphism. We have two projections
    \[
    p_1, p_2: \P^3 \times \P^3 \to \P^3.
    \]
    Let $\OO \otimes H^0(\OO(1))^{\vee} \onto Q$ be the universal rank three quotient bundle whose fibers parametrize three-dimensional quotients $H^0(\OO(1))^{\vee} \onto U$. We can compose the morphisms
    $p_2^*\OO(-2) \to p_2^* \OO(-1) \otimes H^0(\OO(1))^{\vee} \to p_2^* \OO(-1) \otimes p_1^* Q$. Taking the quotient leads to an object $\UU$. By construction this is the desired family.
    
    \item In order to show that $M$ is smooth all we have to do is to show that $\Ext^2(E, E) = 0$. However, applying the three functors $\RHom( \cdot , \OO(-2)[1])$, $\RHom( \cdot, \OO(-1))$, and $\RHom(E, \cdot)$ to
    \[
    0 \to \OO(-1)^{\oplus 3} \to E \to \OO(-2)[1] \to 0
    \]
    implies this immediately. \qedhere
\end{enumerate}
\end{proof}

\subsection{The case $c = 0$, $d = -1$}

Note that $\Hom(T(-3), \OO(-1)) \cong \C^6$. Let $Q$ be the generalized Kronecker quiver with two vertices and six arrows between them, all going in the same direction. By Lemma \ref{lem:20-1_p3} and Theorem \ref{thm:macri_p3} we know that $M(2,0,-1,0)$ is isomorphic to the moduli space of quiver representations of $Q$ with dimension vector $(1, 5)$. This space parametrizes six vectors in $\C^5$ modulo the action of $\GL(5)$. It is not hard to see that this space is $\P^5$.

\subsection{The case $c = 0$, $d = -3$}

This is the only case in which there is more than one chamber where the moduli space of tilt-semistable objects is non-trivial. By Theorem \ref{thm:sequences_maximal_ch3} there are exactly two walls in tilt stability for objects with Chern character $v = (2,0,-3,4)$. The two walls are
\begin{align*}
    W_1 &= W\left(v, \left(2,-1,-\frac{1}{2}\right)\right), \\
    W_2 &= W(v, \OO(-1)).
\end{align*}
Note that $W_2$ is located inside $W_1$. By Theorem \ref{thm:sequences_maximal_ch3}, there are no semistable objects inside $W_2$. Let $M'$ be the moduli space of tilt-semistable objects in between $W_1$ and $W_2$. Note that $M'$ does not parametrize any strictly semistable objects. The first goal is to show that $M'$ is isomorphic to $\Gr(10, 3)$ the moduli space of three-dimensional quotients of $H^0(\OO(2))^{\vee}$. We define a function $\varphi: \Gr(10, 3) \to M'$ as follows. If $H^0(\OO(2))^{\vee} \to U$ is a three-dimensional quotient, then we get a short exact sequence of sheaves
\[
0 \to \OO(-3) \to \OO(-1) \otimes U \to E \to 0.
\]
We set $\varphi(U) = E$.

\begin{lem}
\begin{enumerate}
    \item The function $\varphi$ is well defined, i.e., $E$ is Gieseker-stable.
    \item The function $\varphi$ is bijective.
    \item The function $\varphi$ is a morphism of schemes.
    \item The moduli space $M'$ is smooth, and therefore, $\varphi$ is an isomorphism.
\end{enumerate}
\end{lem}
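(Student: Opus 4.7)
The plan is to follow the blueprint of Lemma \ref{lem:2-1-1/2_moduli} closely, adapting each of the four parts in turn, with the new subtlety that tilt-stability must be verified in a chamber bounded by two walls rather than only above a single wall.

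For (i), I would first note that the map $\OO(-3) \to \OO(-1) \otimes U$ arising from the inclusion $U^\vee \hookrightarrow H^0(\OO(2))$ is injective as a map of sheaves, so $E = \varphi(U)$ is a coherent sheaf of the claimed Chern character. To obtain Gieseker-stability, I would apply $\Hom(\OO(n), -)$ to the defining sequence and use $H^0(\OO(-1-n)) = 0$ together with $H^1(\OO(-3-n)) = 0$ for all $n \geq 0$ to conclude $\Hom(\OO(n), E) = 0$; this precludes every rank-one subsheaf of nonnegative slope and forces slope-stability. By Proposition \ref{prop:large_volume_limit}, $E$ is then tilt-stable for $\alpha \gg 0$, and by Theorem \ref{thm:sequences_maximal_ch3}(2)(d) the only tilt-walls are $W_1$ and $W_2$. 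It remains to rule out destabilization at $W_1$, for which one uses the defining sequence to exclude nonzero maps from $\OO_V(-2)$ into $E$ and from any $F$ with $\ch(F) = (2,-1,-\tfrac{1}{2},\tfrac{5}{6})$ into $E$, and symmetrically for their potential roles as quotients.

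For (ii), since $M'$ contains no object stable across $W_2$, Theorem \ref{thm:sequences_maximal_ch3}(2)(d) guarantees that every $E \in M'$ fits into a destabilizing sequence $0 \to \OO(-1)^{\oplus 3} \to E \to \OO(-3)[1] \to 0$ in $\Coh^{\beta}(\P^3)$, whose cohomology sheaves return exactly the defining sheaf sequence of $\varphi$. Uniqueness of the Harder--Narasimhan filtration just inside $W_2$ makes the subobject $\OO(-1)^{\oplus 3}$ canonical up to the $\GL(3)$-action, so the classifying element in $\Hom(\OO(-3), \OO(-1)^{\oplus 3}) \cong H^0(\OO(2))^{\oplus 3}$ is canonical modulo $\GL(3)$; injectivity of the map forces its image to span a three-dimensional subspace of $H^0(\OO(2))$, recovering the desired point of $\Gr(10,3)$. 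For (iii), I would build a universal family on $\Gr(10,3) \times \P^3$ by composing the pulled-back dual evaluation $p_2^*\OO(-3) \to p_2^*\OO(-1) \otimes H^0(\OO(2))^\vee$ with the universal quotient $p_1^*\bigl(H^0(\OO(2))^\vee \otimes \OO_{\Gr}\bigr) \onto p_1^* \UU$ and taking the cokernel; the universal property of $M'$ supplied by Theorem \ref{thm:moduli_bridgeland_stable_objects}, applicable because $M'$ carries no strictly semistable objects, yields the desired morphism of schemes.

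For (iv), I would establish $\Ext^2(E, E) = 0$ by applying $\RHom(E, -)$ to the defining sequence, using the easy vanishings $\Ext^i(E, \OO(-3)) = 0$ for $i \neq 1$ and $\Ext^i(E, \OO(-1)) = 0$ for $i \geq 2$ extracted from the same sequence combined with elementary cohomology of line bundles on $\P^3$. The resulting long exact sequence also gives $\dim \Ext^1(E, E) = 21 = \dim \Gr(10,3)$. Since $\varphi$ is therefore a bijective morphism between smooth proper irreducible varieties of the same dimension, miracle flatness renders it flat, and a finite flat bijective degree-one morphism between smooth varieties is an isomorphism. The main obstacle is the verification in (i) that $E$ survives $W_1$: in contrast with the single-wall situation in Lemma \ref{lem:2-1-1/2_moduli}, one has to control both chamber boundaries, and eliminating the hypothetical $W_1$-destabilizers requires careful homological bookkeeping via the defining sequence together with Lemma \ref{lem:2-1-1/2_p3} to pin down the possible $F$.
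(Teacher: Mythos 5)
There is a genuine gap in your treatment of part (i), and it concerns exactly the locus that matters for the rest of the section. You try to prove stability of $E=\varphi(U)$ by first establishing Gieseker (or slope) stability from $\Hom(\OO(n),E)=0$, passing to tilt-stability for $\alpha\gg 0$ via Proposition \ref{prop:large_volume_limit}, and then arguing that $E$ survives the crossing of $W_1$ by excluding nonzero maps $\OO_V(-2)\to E$. This cannot work uniformly over $\Gr(10,3)$: when the three quadrics spanning $U$ have a common linear factor $l$, i.e.\ $U=l\cdot U'$ for a three-dimensional $U'\subset H^0(\OO(1))$, the cokernel $E$ has torsion and sits in a sequence $0\to\OO_V(-2)\to E\to F\to 0$ with $V=\{l=0\}$ and $F\in M(2,-1,-\tfrac{1}{2},\tfrac{5}{6})$; this is precisely the locus of Lemma \ref{lem:destab_locus}. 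Such an $E$ is neither slope- nor Gieseker-stable, it does admit a nonzero map from $\OO_V(-2)$, and it genuinely is destabilized at $W_1$ when approached from the Gieseker chamber --- yet it still defines a point of $M'$, because the subobject $\OO_V(-2)$ destabilizes only above $W_1$, not below. The statement needed for well-definedness is tilt-stability in the chamber between $W_1$ and $W_2$ (the phrase ``Gieseker-stable'' in the lemma is inherited loosely from Lemma \ref{lem:2-1-1/2_moduli}), and your route through the outer chamber cannot reach that chamber on the $\P^3\times\P^3$ locus that later becomes the center of the blow-up.

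The paper's proof is the one-line remark that the argument of Lemma \ref{lem:2-1-1/2_moduli} goes through with $\OO(-2)$ replaced by $\OO(-3)$, and that argument works at the \emph{inner} wall rather than the outer one: along $W_2$ the object $E$ is strictly semistable with Jordan--H\"older factors $\OO(-1)^{\oplus 3}$ and $\OO(-3)[1]$ (Lemma \ref{lem:20-3_p3}), the JH-property pins down any destabilizer as an extension of a sub-collection of these factors, and each possibility is eliminated either numerically or by the single homological input $\Hom(E,\OO(-1))=0$, which holds because $U^{\vee}=\Hom(\OO(-1)\otimes U,\OO(-1))\to\Hom(\OO(-3),\OO(-1))=H^0(\OO(2))$ is injective when $\dim U=3$. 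This lands directly in the chamber between $W_2$ and $W_1$ with no need to discuss $W_1$ at all. Your parts (ii) and (iii) follow the paper's template correctly, and your computation in (iv) giving $\Ext^2(E,E)=0$ and $\ext^1(E,E)=21=\dim\Gr(10,3)$ is sound once (i) is in place; only part (i) needs to be redone along the lines above.
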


\begin{proof}
The proof is essentially the same as that of Lemma \ref{lem:2-1-1/2_moduli} with $\OO(-2)$ replaced by $\OO(-3)$.
\end{proof}

Next, we have to understand crossing the wall $W_1$ to describe $M(2,0,-3,4)$. 

\begin{lem}
\label{lem:ext_special_case_d3}
Let $F \in M(2,0,-\tfrac{1}{2}, \tfrac{5}{6})$ and $V \subset \P^3$ be a plane. Then
\begin{align*}
    \ext^i(F,F) &= \begin{cases}
    1 &\text{, if $i = 0$} \\
    3 &\text{, if $i = 1$} \\
    0 &\text{, otherwise}
    \end{cases}, \\
    \ext^i(\OO_V(-2), \OO_V(-2)) &= \begin{cases}
    1 &\text{, if $i = 0$} \\
    3 &\text{, if $i = 1$} \\
    0 &\text{, otherwise}
    \end{cases}, \\
    \ext^i(F, \OO_V(-2)) &= \begin{cases}
    1 &\text{, if $i = 1$} \\
    0 &\text{, otherwise}
    \end{cases}, \\
    \ext^i(\OO_V(-2), F) &= \begin{cases}
    15 &\text{, if $i = 1$} \\
    0 &\text{, otherwise}
    \end{cases}.
\end{align*}
\end{lem}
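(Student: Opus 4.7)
The strategy is to push every Ext computation through the two short resolutions
\[
0 \to \OO(-2) \to \OO(-1)^{\oplus 3} \to F \to 0 \qquad \text{(Lemma~\ref{lem:2-1-1/2_p3})}
\]
and
\[
0 \to \OO(-3) \to \OO(-2) \to \OO_V(-2) \to 0
\]
(the latter coming from the linear form defining $V \subset \P^3$), combined with Serre duality on $\P^3$ and a single Riemann--Roch computation. Direct application of HRR produces the Hilbert polynomial
\[
\chi(F(n)) = \tfrac{n^3}{3} + \tfrac{3n^2}{2} + \tfrac{7n}{6},
\]
which supplies every Euler characteristic needed below.

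For (i), slope-stability of $F$ gives $\Hom(F,F) = \C$; Serre duality on $\P^3$ together with $\mu(F(-4)) < \mu(F)$ gives $\Ext^3(F,F) = \Hom(F, F(-4))^* = 0$; and $\Ext^2(F,F) = 0$ is exactly what was verified in the proof of Lemma~\ref{lem:2-1-1/2_moduli}. The Riemann--Roch count $\chi(F,F) = 3\chi(F(1)) - \chi(F(2)) = 9 - 11 = -2$ then forces $\ext^1(F,F) = 3$.

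Parts (ii) and (iii) are direct long exact sequence computations. For (ii), applying $\RHom(-, \OO_V(-2))$ to the resolution of $\OO_V(-2)$ introduces only $H^0(\OO_V) = \C$ and $H^0(\OO_V(1)) = \C^3$; the connecting map between them is multiplication by the linear form defining $V$, which vanishes on $V$, so the map is zero and the long exact sequence collapses to $\Hom = \C$, $\Ext^1 = \C^3$, and higher vanishing. For (iii), applying $\RHom(-, \OO_V(-2))$ to the resolution of $F$ involves only $H^\bullet(\OO_V(-1)) = 0$ and $H^0(\OO_V) = \C$, and the long exact sequence immediately gives $\Hom = 0$, $\Ext^1 = \C$, higher $= 0$.

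For (iv), $\Hom(\OO_V(-2), F) = 0$ is automatic because $\OO_V(-2)$ is torsion while $F$ is torsion-free. Serre duality converts the remaining groups into $\Ext^{3-i}(F, \OO_V(-6))^*$; running $\RHom(-, \OO_V(-6))$ through the resolution of $F$ involves only $H^\bullet(\OO_V(-4))$ and $H^\bullet(\OO_V(-5))$, both concentrated in cohomological degree $2$, which forces $\Hom(F, \OO_V(-6)) = \Ext^1(F, \OO_V(-6)) = 0$ and hence $\Ext^2(\OO_V(-2), F) = \Ext^3(\OO_V(-2), F) = 0$. The Euler characteristic $\chi(\OO_V(-2), F) = \chi(F(2)) - \chi(F(3)) = 11 - 26 = -15$ then pins down $\ext^1(\OO_V(-2), F) = 15$. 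The only step requiring any thought beyond bookkeeping is the zero-map observation in (ii); without it one would compute $\Ext^1(\OO_V(-2), \OO_V(-2)) = \C^2$ instead of the correct $\C^3 = T_V \P^3|_V$-count.
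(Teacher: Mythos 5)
Your proposal is correct and follows essentially the same route as the paper: resolve $F$ by $0 \to \OO(-2) \to \OO(-1)^{\oplus 3} \to F \to 0$ and $\OO_V(-2)$ by a Koszul sequence, then run the long exact sequences (the paper phrases the last computation via the derived dual $\OO_V^\vee \cong \OO_V(1)$ where you use Serre duality, but this is the same bookkeeping). All the numerical values I checked ($\chi(F(n)) = \tfrac{n(n+1)(2n+7)}{6}$, $\chi(F,F) = -2$, $\chi(\OO_V(-2),F) = -15$, and the vanishing of the relevant groups) are correct.
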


\begin{proof}
By Theorem \ref{thm:sequences_maximal_ch3} and Lemma \ref{lem:2-1-1/2_moduli} there is an exact sequence of sheaves
\[
0 \to \OO(-2) \to \OO(-1)^{\oplus 3} \to F \to 0,
\]
where the three linear polynomials defining the first map are linearly independent. The sequence
\[
0 \to \OO(-1) \to \OO \to \OO_V \to 0
\]
shows that the derived dual of $\OO_V$ is given by $\OO_V(1)$. From here the statement is a straightforward computation involving the appropriate long exact sequences.
\end{proof}

By Theorem \ref{thm:sequences_maximal_ch3} any tilt-stable objects $E$ that is destabilized by either $0 \to F \to E \to \OO_V(-2) \to 0$ or $0 \to \OO_V(-2) \to E \to F \to 0$ satisfies the JH-property along $W_1$. This means all non-trivial extensions in $\Ext^1(\OO_V(-2), F)$ or $\Ext^1(F, \OO_V(-2))$ are stable on one side of the wall.

\begin{lem}
\label{lem:destab_locus}
The closed subscheme of $M'$ parametrizing objects $E$ fitting into a sequence
\[
0 \to \OO_V(-2) \to E \to F \to 0
\]
is isomorphic to $\P^3 \times \P^3$.
\end{lem}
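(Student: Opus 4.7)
The plan is to construct a morphism $\psi: \P^3 \times \P^3 \to M'$ that is a closed immersion whose image is the stated subscheme. Identify the first factor with the dual $\P^3$ parametrizing planes $V \subset \P^3$ and the second factor with $M(2,-1,-\tfrac12,\tfrac56)$ via Lemma \ref{lem:2-1-1/2_moduli}. Set $B := \P^3 \times \P^3$ and let $\pi: B \times \P^3 \to B$ be the projection. Write $\OO_{\mathcal{V}}(-2)$ for the structure sheaf of the universal plane in $B \times \P^3$ twisted by $\OO(-2)$, and $\mathcal{F}$ for the pullback to $B \times \P^3$ of the universal family on the second factor. The fiberwise vanishings $\Ext^i(F, \OO_V(-2)) = \delta_{i,1}$ from Lemma \ref{lem:ext_special_case_d3}, combined with Theorem \ref{thm:lange_base_change} and Grauert's theorem, make $\mathcal{L} := \lExt^1_\pi(\mathcal{F}, \OO_{\mathcal{V}}(-2))$ a line bundle on $B$ with universal base change. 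The adjoint of $\mathrm{id}_{\mathcal{L}}$ furnishes a canonical class in $\Ext^1(\mathcal{F} \otimes \pi^*\mathcal{L}, \OO_{\mathcal{V}}(-2))$ and hence a universal extension
\[
0 \to \OO_{\mathcal{V}}(-2) \to \mathcal{E} \to \mathcal{F} \otimes \pi^*\mathcal{L} \to 0
\]
whose fibers are tilt-stable strictly between $W_2$ and $W_1$ by Theorem \ref{thm:sequences_maximal_ch3}. This induces the desired morphism $\psi: B \to M'$.

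Injectivity of $\psi$ on closed points follows from the JH-property along $W_1$ in Theorem \ref{thm:sequences_maximal_ch3}, which recovers $(V, F)$ from the Jordan--H\"older factors of $E$. To upgrade $\psi$ to a closed immersion I would verify that its differential is injective everywhere. The tangent space to $B$ at $(V, F)$ is $\Ext^1(\OO_V(-2), \OO_V(-2)) \oplus \Ext^1(F, F) \cong \C^3 \oplus \C^3$ by Lemma \ref{lem:ext_special_case_d3}. Applying $\RHom(-, \OO_V(-2))$ and $\RHom(-, F)$ to the defining sequence, and using the observation that the connecting map $\Hom(\OO_V(-2), \OO_V(-2)) \to \Ext^1(F, \OO_V(-2))$ sends $\mathrm{id}$ to the nonzero extension class and is therefore an isomorphism, I would derive $\Ext^1(E, \OO_V(-2)) \cong \Ext^1(\OO_V(-2), \OO_V(-2))$ together with an injection $\Ext^1(F, F) \hookrightarrow \Ext^1(E, F)$. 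A further application of $\RHom(E, -)$ assembles these into the short exact sequence $0 \to \Ext^1(E, \OO_V(-2)) \to \Ext^1(E, E) \to \Ext^1(E, F) \to 0$. The first summand of $d\psi$ lands isomorphically onto the subspace $\Ext^1(E, \OO_V(-2))$, while the second projects injectively to the quotient $\Ext^1(E, F)$, so $d\psi$ is injective.

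Since $\psi$ is proper, injective on closed points, and has everywhere injective differential, it is a closed immersion. Its image is exactly the locus of $E \in M'$ fitting in a sequence of the form $0 \to \OO_V(-2) \to E \to F \to 0$, because $\Ext^1(F, \OO_V(-2)) = \C$ provides a unique such $E$ per pair $(V, F)$. The main obstacle is the tangent-space computation, in which several long exact sequences must be tracked simultaneously; the vanishings in Lemma \ref{lem:ext_special_case_d3} reduce this to a routine, if lengthy, diagram chase.
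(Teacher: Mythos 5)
Your proof is correct, but it takes a genuinely different route from the paper's. The paper exploits the explicit identification $M' \cong \Gr(3,10)$ from the preceding lemma: a point of $M'$ is a three-dimensional subspace $W \subset H^0(\OO(2))$, a point of $M(2,-1,-\tfrac{1}{2},\tfrac{5}{6}) \times \Gr(3,4) \cong \P^3 \times \P^3$ is a pair $(U, l)$ with $U \subset H^0(\OO(1))$ three-dimensional and $l$ a linear form, and the multiplication map $(U,l) \mapsto l \cdot U$ gives a closed embedding $\P^3 \times \P^3 \into \Gr(3,10)$; applying the Snake Lemma to the factorization of $\OO(-3) \to \OO(-1)\otimes W$ through $\OO(-2) \to \OO(-1)\otimes U$ then identifies the image of this embedding with exactly the locus of $E$ admitting a subsheaf $\OO_V(-2)$ with quotient $F$. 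You instead argue intrinsically: you build the universal extension over $\P^3 \times \P^3$ via the relative Ext line bundle and show the induced classifying map is proper, injective on points, and unramified, hence a closed immersion. Both work, and the numerology in your tangent-space argument checks out ($\ext^1(E,\OO_V(-2)) = 3$, $\ext^1(E,F) = 18$, $\ext^1(E,E) = 21 = \dim M'$). The paper's route is shorter because the closed-embedding property comes from elementary linear algebra and no deformation theory is needed, and it makes the sublocus of $\Gr(3,10)$ completely explicit. Your route has two points that deserve to be spelled out: the stability of the fibers of $\mathcal{E}$ in the chamber between $W_1$ and $W_2$ follows from the JH-property only after the short argument given in the paragraph preceding the lemma (Theorem \ref{thm:sequences_maximal_ch3} alone does not literally say every non-trivial extension is stable there), and the claim that the two summands of $d\psi$ map respectively into the subspace $\Ext^1(E,\OO_V(-2))$ and injectively to the quotient $\Ext^1(E,F)$ rests on the Kodaira--Spencer identities $p_*\kappa = p^*v_2$ and $i^*\kappa = i_*v_1$, which should be stated. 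In exchange, your argument never uses $M' \cong \Gr(3,10)$ and would transfer verbatim to wall-crossings where no such Grassmannian description is available.
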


\begin{proof}
We have showed that the moduli space $M(2,-1,-\tfrac{1}{2},\tfrac{5}{6})$ parametrizes three-dimensional subspaces $U \subset H^0(\OO(1))$. Moreover, the space $M' \cong \Gr(3, 10)$ parametrizes three-dimensional subspaces $W \subset H^0(\OO(2))$. Any plane in $\P^3$ is cut out by a linear equation $l$. From this we get a closed embedding
\[
\P^3 \times \P^3 \cong M(2,-1,-\tfrac{1}{2},\tfrac{5}{6}) \times \Gr(3,4) \into \Gr(3, 10)
\]
as follows. If $U \subset H^0(\OO(1))$ with $\dim U = 3$ and a $l$ is a linear equation cutting out a plane in $\P^3$, then we get a three-dimensional subspace $l \cdot U \subset H^0(\OO(2))$. The goal in this argument is to show that this image is precisely the locus in $M'$ that is destabilized at the wall $W_1$.

Let $W = l \cdot U \subset H^0(\OO(2))$ be as above. Then we get a short exact sequence
\[
0 \to \OO(-3) \to \OO(-1) \otimes U \to E \to 0.
\]
The morphism $\OO(-3) \to \OO(-1) \otimes W$ factors through $\OO(-2) \to \OO(-1) \otimes U$ whose quotient is an element $F \in M(2,-1,-\tfrac{1}{2},\tfrac{5}{6})$. By the Snake Lemma the kernel of $E \onto F$ is given by $\OO_V(-2)$, where $V$ is cut out by $l$.

Assume vice versa that there is a short exact sequence
\[
0 \to \OO_V(-2) \to E \to F \to 0.
\]
Then $V$ is cut out by a linear equation $l$. Since there is also a short exact sequence
\[
0 \to \OO(-3) \to \OO(-1)^{\oplus 3} \to E \to 0,
\]
we get a morphism $\OO(-1)^{\oplus 3} \onto F$ whose kernel has to be $\OO(-2)$. This morphism $\OO(-2) \to \OO(-1)^{\oplus 3}$ gives a three-dimensional subspace $U \subset H^0(\OO(1))$. By construction the subspace $W = l \cdot U \subset H^0(\OO(2))$ represents $E$.
\end{proof}

We need the following classical result by Moishezon. Recall that the analytification of a smooth proper algebraic spaces of finite type over $\C$ of dimension $n$ is a complex manifold with $n$ independent meromorphic functions. Moishezon's result is originally stated in these terms as his work predated algebraic spaces.

\begin{thm}[\cite{Moi67:moishezon_manifolds}]
\label{thm:blow_up}
Any birational morphism $f: X \to Y$ between smooth proper algebraic spaces of finite type over $\C$ such that the contracted locus $E$ is irreducible and the image $f(E)$ is smooth is the blow up of $Y$ in $f(E)$.
\end{thm}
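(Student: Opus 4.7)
\medskip

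\noindent\emph{Proof proposal for Theorem \ref{thm:blow_up}.}
The plan is to verify the universal property of the blow-up: produce a canonical $Y$-morphism $\varphi \colon X \to \widetilde{Y} := \mathrm{Bl}_{f(E)}(Y)$ and then show it is an isomorphism. Write $Z = f(E)$, which is smooth by assumption, and let $c = \mathrm{codim}_Y(Z)$. Since $f$ is a birational morphism between smooth proper algebraic spaces, Zariski's main theorem combined with purity of the exceptional locus forces the exceptional set of $f$ to be of pure codimension one; by hypothesis this exceptional set is the single irreducible divisor $E$, which is therefore a Cartier divisor on the smooth space $X$.

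The first key step is to show that the pullback ideal $f^{-1}(\mathcal{I}_Z)\cdot \mathcal{O}_X$ is an invertible sheaf on $X$. Once this is established, the universal property of blow-ups (as in Hartshorne II.7 or EGA II.8) immediately furnishes a canonical $Y$-morphism $\varphi \colon X \to \widetilde{Y}$. To verify invertibility, I would argue locally around a point $z \in Z$: after passing to an \'etale neighborhood, we may assume $\mathcal{I}_Z$ is generated by a regular sequence $g_1,\dots,g_c$. The pullbacks $f^*g_i$ all vanish along $E$, and since $\mathcal{I}_E$ is invertible with $E$ irreducible, one can write $f^*g_i = t^{m_i}\cdot h_i$ for some local parameter $t$ cutting out $E$ and units or non-vanishing functions $h_i$ (up to possibly shrinking). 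The hard part is precisely to show that the minimum multiplicity is $1$ and that the $h_i$ generate the unit ideal along $E$, so that $f^{-1}(\mathcal{I}_Z)\cdot\mathcal{O}_X = \mathcal{I}_E$. This is where the smoothness of $Z$ and the irreducibility of $E$ are both essential; one extracts the needed information by analyzing the generic fiber of $E \to Z$, which by a dimension count and the factorization $E \hookrightarrow X \to Y$ must have dimension exactly $c-1$.

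With $\varphi \colon X \to \widetilde{Y}$ in hand, it remains to show $\varphi$ is an isomorphism. It is automatically birational because both $X$ and $\widetilde{Y}$ are isomorphic to $Y$ away from $Z$, and $\varphi$ is compatible with these identifications. Both spaces are smooth and proper, and $\varphi$ sends the irreducible divisor $E$ into the irreducible exceptional divisor $\widetilde{E} = \mathbb{P}(N_{Z/Y}) \subset \widetilde{Y}$. Consider the restriction $\varphi|_E \colon E \to \widetilde{E}$ over $Z$. By the universal property, $\varphi|_E$ classifies the line bundle quotient $f^*(N_{Z/Y}^\vee)|_E \twoheadrightarrow \mathcal{I}_E/\mathcal{I}_E^2|_E \cong \mathcal{O}_E(-E)|_E$, and by comparing ranks and degrees on fibers one sees $\varphi|_E$ is a bijective morphism between smooth projective bundles over $Z$, hence an isomorphism. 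A birational morphism between smooth proper algebraic spaces that is an isomorphism outside of a divisor and an isomorphism on the exceptional divisor itself is an isomorphism, by Zariski's main theorem applied to $\varphi$: the fibers of $\varphi$ are connected of dimension zero, so $\varphi$ is finite birational between normal spaces, hence an isomorphism.

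The step I expect to be most delicate is confirming that $f^{-1}(\mathcal{I}_Z)\cdot\mathcal{O}_X$ is genuinely invertible (and not merely supported on a divisor). This pinpoints the content of Moishezon's hypotheses: irreducibility of $E$ prevents the pullback ideal from having several Cartier components with varying multiplicities, while smoothness of $Z$ is what allows one to reduce to the regular sequence calculation above. Everything else in the argument is formal once this local statement is known.
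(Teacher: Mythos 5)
The paper does not actually prove this statement: it is quoted as a black box from Moishezon \cite{Moi67:moishezon_manifolds} (after translating his language of compact complex manifolds with maximally many independent meromorphic functions into that of smooth proper algebraic spaces), so there is no internal proof to compare your argument against. Judged on its own, your proposal lays out the correct formal skeleton --- purity of the exceptional locus over the smooth base, the universal property of the blow-up, and Zariski's main theorem to upgrade a quasi-finite proper birational morphism to an isomorphism --- but it leaves unproven precisely the two points that constitute the entire content of the theorem.

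First, the invertibility of $f^{-1}(\mathcal{I}_Z)\cdot\mathcal{O}_X$, which you yourself flag as ``the hard part,'' is asserted rather than established. Writing $f^*g_i = t^{m_i}h_i$ is fine, but the $h_i$ are not units or non-vanishing near $E$: they cut out the strict transforms of the hypersurfaces $\{g_i=0\}$, which in general meet $E$ (already for the blow-up of a point in $\P^3$ each strict transform meets $E\cong\P^2$ in a line), and no amount of shrinking around a point of $E$ removes them. The dimension count on the generic fiber of $E\to Z$ only gives $\dim = c-1$; it does not show that $\min_i m_i=1$, nor that the $h_i$ achieving the minimum generate the unit ideal along $E$, which is what is needed for $f^{-1}(\mathcal{I}_Z)\cdot\mathcal{O}_X=\mathcal{I}_E$. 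Second, in the last step you assert that $\varphi|_E\colon E\to\widetilde{E}=\P(N_{Z/Y})$ is a bijective morphism ``between smooth projective bundles over $Z$''; but that $E\to Z$ is a projective bundle is essentially the conclusion of the theorem, not a hypothesis, and without an a priori positivity statement for $\mathcal{O}_E(-E)$ on the fibres of $E\to Z$ (the higher-dimensional analogue of $E^2=-1$ for surfaces) nothing prevents $\varphi$ from contracting $E$ inside $\widetilde{E}$, in which case the concluding appeal to Zariski's main theorem collapses. Both gaps are real: they are exactly where Moishezon's arguments (or, in the algebraic setting, a discrepancy computation of the form $K_X=f^*K_Y+aE$ with $a\geq c-1$ together with a negativity lemma) do the actual work, and the surrounding formal steps you supply are the easy part.
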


Since $\Ext^1(F, \OO_V(-2)) = \C$ independently of $F$ and $V$, there is a unique stable extension $0 \to \OO_V(-2) \to E \to F \to 0$ for each $F$ and $V$. Therefore, we get a morphism $M(2,0,-3,4) \to M'$ which is birational outside of the objects destabilized by this type of sequence. By Lemma \ref{lem:destab_locus} the exceptional locus maps onto a smooth projective subvariety. The fibers are all irreducible and given by $\P(\Ext^1(\OO_V(-2), F)) \cong \P^{14}$. Therefore, the exceptional locus is irreducible. Theorem \ref{thm:blow_up} concludes the proof together with the following lemma.

\begin{lem}
The moduli space $M(2,0,-3,4)$ is smooth.
\end{lem}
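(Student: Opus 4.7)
The plan is to verify unobstructedness directly at every closed point of $M(2,0,-3,4)$: showing $\Ext^2(E,E) = 0$ for each Gieseker-stable sheaf $E$ parametrized by the moduli space yields smoothness.

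By Theorem \ref{thm:sequences_maximal_ch3} and the wall-crossing picture assembled earlier in this section, the closed points of $M(2,0,-3,4)$ fall into two classes. The first (``generic'') class consists of sheaves $E$ lying in the complement of the exceptional divisor, which fit into the inherited sequence $0 \to \OO(-3) \to \OO(-1)^{\oplus 3} \to E \to 0$. The second (``exceptional'') class consists of points of the exceptional divisor, which are the new Gieseker-stable extensions $0 \to F \to E \to \OO_V(-2) \to 0$ with $F \in M(2,-1,-\tfrac{1}{2},\tfrac{5}{6})$ and $V \subset \P^3$ a plane.

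For the generic class, apply $\RHom(E, -)$ to the defining sequence. Using Serre duality, the two $\Ext$-groups that sandwich $\Ext^2(E,E)$ in the resulting long exact sequence translate into $H^1(E(-3))^\vee$ and $H^0(E(-1))^\vee$. Both vanish by twisting the defining sequence of $E$ by $-3$ and $-1$ respectively and running the resulting cohomology long exact sequences; this is identical in spirit to the computation used to prove smoothness of $M'$ via the argument of Lemma \ref{lem:2-1-1/2_moduli}.

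For the exceptional class, the strategy is a short diagram chase driven by Lemma \ref{lem:ext_special_case_d3}, which records that $\Ext^2$ vanishes between every ordered pair of objects in $\{F, \OO_V(-2)\}$. Applying $\RHom(-, \OO_V(-2))$ to the defining short exact sequence sandwiches $\Ext^2(E, \OO_V(-2))$ between $\Ext^2(\OO_V(-2), \OO_V(-2)) = 0$ and $\Ext^2(F, \OO_V(-2)) = 0$, hence $\Ext^2(E, \OO_V(-2)) = 0$. The analogous computation with $\RHom(-, F)$ yields $\Ext^2(E, F) = 0$. Finally, applying $\RHom(E, -)$ to the same sequence sandwiches $\Ext^2(E,E)$ between those two vanishings, delivering the desired conclusion. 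I do not expect significant obstacles: Lemma \ref{lem:ext_special_case_d3} was tailored precisely for this application, so once the dichotomy of the two classes is in place the remaining work is a direct cohomology calculation in the generic case and a formal diagram chase in the exceptional case.
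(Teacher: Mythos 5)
Your proof is correct and, on the exceptional locus, identical to the paper's: the paper's one-line argument applies the same three functors $\RHom(\cdot, F)$, $\RHom(\cdot, \OO_V(-2))$, and $\RHom(E,\cdot)$ to $0 \to F \to E \to \OO_V(-2) \to 0$ and reads off the vanishings from Lemma \ref{lem:ext_special_case_d3}. Your explicit treatment of the generic class is a welcome addition rather than a divergence; the paper leaves those points to the already-established smoothness of $M' \cong \Gr(3,10)$, since $\Ext^2(E,E)$ is independent of the chamber, and your cohomology computation ($H^0(E(-1)) = H^1(E(-3)) = 0$ from the resolution $0 \to \OO(-3) \to \OO(-1)^{\oplus 3} \to E \to 0$) checks out.
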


\begin{proof}
Applying the three functors $\RHom( \cdot, F)$, $\RHom( \cdot, \OO_V(-2))$, and $\RHom(E, \cdot)$ to
\[
0 \to F \to E \to \OO_V(-2) \to 0
\]
leads to
\[
\Ext^2(E, E) = 0. \qedhere
\]
\end{proof}

\subsection{The general case with $c = -1$}

\begin{lem}
Let $d \leq -\tfrac{3}{2}$, and let $V \subset \P^3$ be plane. Giving a slope-stable sheaf $E$ that can be written as an extension
\[
0 \to \OO(-1)^{\oplus 2} \to E \to \OO_V\left( d - \frac{1}{2} \right) \to 0
\]
is equivalent to giving a subspace of $\Ext^1(\OO_V(d - \tfrac{1}{2}), \OO(-1))$ of dimension two.
\end{lem}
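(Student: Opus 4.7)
The plan is to interpret extensions of $Q := \OO_V(d - \tfrac{1}{2})$ by $K := \OO(-1)^{\oplus 2}$ via the canonical isomorphism $\Ext^1(Q, K) \cong \Ext^1(Q, \OO(-1)) \otimes \C^2$, so that an extension class $\xi$ becomes a linear map $\phi_\xi : \C^2 \to \Ext^1(Q, \OO(-1))$, and to match the rank-two condition on $\phi_\xi$ with slope-stability of $E$. First I would show that $K \hookrightarrow E$ is intrinsic to $E$: applying $\Hom(\OO(-1), -)$ to the extension and using the vanishing
\[
\Hom(\OO(-1), Q) = H^0(V, \OO_V(d + \tfrac{1}{2})) = 0,
\]
which holds since $d \leq -\tfrac{3}{2}$, I obtain $\Hom(\OO(-1), E) = \C^2$, and the evaluation map $\OO(-1) \otimes \Hom(\OO(-1), E) \to E$ recovers $K$ as a subsheaf. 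Consequently, the isomorphism class of $E$ corresponds to $\xi$ modulo the action of $\Aut(K) \times \Aut(Q) = \GL(2) \times \C^*$, which preserves $\Imm(\phi_\xi) \subset \Ext^1(Q, \OO(-1))$; moreover, when $\phi_\xi$ is injective, this action is transitive on the set of realizations of a given two-dimensional image. Thus the bijection with $\Gr(2, \Ext^1(Q, \OO(-1)))$ reduces to showing that $E$ is slope-stable if and only if $\phi_\xi$ has rank two.

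The easy direction rules out low-rank $\phi_\xi$. If $\phi_\xi = 0$ then $E = K \oplus Q$ is not torsion-free. If $\phi_\xi$ has rank one, I would choose coordinates on $\C^2$ so that the second component of $\xi$ vanishes, giving $E \cong \OO(-1) \oplus E_1$ where $E_1$ is a rank-one extension of $Q$ by $\OO(-1)$; then $E_1 \hookrightarrow E$ is a rank-one subsheaf of slope $0 > -\tfrac{1}{2} = \mu(E)$, destabilizing $E$.

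The main obstacle will be the converse: for rank-two $\phi_\xi$, showing $E$ is slope-stable. The plan is to proceed as in the proof of Lemma \ref{lem:2-1-1/2_moduli}(1). By Theorem \ref{thm:sequences_maximal_ch3}(1)(b) the defining sequence is the unique wall $W$ of $E$ in tilt stability, and $E$ satisfies the JH-property along $W$ with stable factors $\OO(-1), \OO(-1), Q$. A destabilizing tilt-semistable quotient $E \onto G$ just above $W$ would satisfy $\nu_{\alpha, \beta}(G) = \nu_{\alpha, \beta}(E)$ on $W$, so its stable factors would be drawn from this list. The rank-two assumption on $\phi_\xi$ gives $\Hom(E, \OO(-1)) = 0$, which eliminates the possibilities involving $\OO(-1)$-quotients, while standard $\Ext^1$-vanishings between $\OO(-1)$ and $Q$ handle the remaining cases. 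Hence $E$ is $\nu_{\alpha, \beta}$-semistable above $W$, and Proposition \ref{prop:large_volume_limit} promotes this to $2$-Gieseker-semistability, which in this Chern character (no rank-one subsheaf can attain slope $-\tfrac{1}{2}$) forces slope-stability.
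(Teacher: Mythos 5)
Your proposal is correct and follows essentially the same strategy as the paper: reduce to a two-dimensional subspace of $\Ext^1(\OO_V(d-\tfrac{1}{2}),\OO(-1))$ by showing the subsheaf $\OO(-1)^{\oplus 2}$ is intrinsic and quotienting by $\Aut$, rule out rank $\leq 1$ classes by exhibiting a destabilizing summand, and prove stability of rank-two extensions via the JH-property along the unique wall together with $\Hom(E,\OO(-1))=0$. The only (harmless) variations are that you recover the subsheaf from the evaluation map on $\Hom(\OO(-1),E)\cong\C^2$ rather than from uniqueness of Harder--Narasimhan factors below the wall, and you split off a direct summand in the rank-one case where the paper invokes the octahedron axiom.
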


\begin{proof}
Giving such an extension is the same as giving an element in
\[
\Ext^1\left(\OO_V\left(d - \frac{1}{2}\right), \OO(-1)^{\oplus 2}\right).
\]
In Theorem \ref{thm:sequences_maximal_ch3} we have already shown that for semistable objects this short exact sequence is the Harder-Narasimhan filtration of $E$ once $E$ becomes tilt-unstable. The Harder-Narasimhan factors are unique. This means $E$ determines both $V$ and the subobject $\OO(-1)^{\oplus 2}$. However, the group $\GL(2)$ acts via automorphisms on $\OO(-1)^{\oplus 2}$ without changing the isomorphism class of $E$. This means we get a subspace of  $\Ext^1(\OO_V(d - \tfrac{1}{2}), \OO(-1))$.

If this subspace is of dimension zero, then $E$ is a direct sum and certainly unstable. Assume the subspace is of dimension one. Then the morphism $\OO_V(d - \frac{1}{2}) \to \OO(-1)^{\oplus 2}[1]$ factors through $\OO(-1)[1]$. The octahedron axiom implies that there is a map $E \onto \OO(-1)$ in contradiction to stability.

Assume vice versa that we have a two dimensional subspace of  $\Ext^1(\OO_V(d - \tfrac{1}{2}), \OO(-1))$. Choosing two arbitrary basis elements leads to an extension
\[
0 \to \OO(-1)^{\oplus 2} \to E \to \OO_V\left( d - \frac{1}{2} \right) \to 0.
\]
This object $E$ is strictly semistable along the induced wall $W$. Its Jordan-H\"older factors along the wall are two copies of $\OO(-1)$ and one copy of $\OO_V(d - \tfrac{1}{2})$. The Jordan-H\"older factors of any destabilizing subobject must be a subset of these. By construction we have $\Hom(E, \OO(-1)) = 0$. Therefore, neither $\OO_V(d - \tfrac{1}{2})$ nor an extension between $\OO_V(d - \tfrac{1}{2})$ and $\OO(-1)$ can be a subobject of $E$.
\end{proof}

The argument will proceed in three steps. First we construct the Grassmann bundle that we expect to be the moduli space. Then we construct a global family on this space. This family will induce a morphism, and we finish by showing that it is an isomorphism.

Let $\VV \subset \Gr(3,4) \times \P^3 \cong \P^3 \times \P^3$ be the universal plane. There are two projections $p: \Gr(3,4) \times \P^3 \to \Gr(3,4)$ and $q: \Gr(3,4) \times \P^3 \to \P^3$. The dimension of the group $\Ext^i(\OO_V(d - \tfrac{1}{2}), \OO(-1)) = H^{i - 1}(\OO_V(\tfrac{1}{2} - d))$ is independent of the plane $V \subset \P^3$ and non-zero if and only if $i \neq 1$. By Theorem \ref{thm:lange_base_change} this implies that
\[
A := \lExt^1_p(\OO_{\VV} \otimes q^* \OO(d - \tfrac{1}{2}), q^* \OO(-1)) \cong Rp_* \RHom(\OO_{\VV} \otimes q^* \OO(d - \tfrac{1}{2}), q^* \OO(-1))[1] 
\]
is a vector bundle such that the natural map $A_{V} \to \Ext^1(\OO_V(d - \tfrac{1}{2}), \OO(-1))$ is an isomorphism for every plane $V \subset \P^3$.

Let $\Gr(A^{\vee}, 2)$ be the Grassmann bundle parametrizing locally free rank two quotients of $A^{\vee}$. There is a projection $\pi: \Gr(A^{\vee}, 2) \to \Gr(3,4)$. Let the quotient $\pi^* A^{\vee} \onto \QQ$ be the universal quotient bundle.

Let $E$ be a stable sheaf as above. Then we get a commutative diagram with exact rows:

\centerline{
\xymatrix{
0 \ar[r] & \OO(-1) \otimes \Ext^1\left(\OO_V\left( d - \frac{1}{2} \right), \OO(-1)\right)^{\vee} \ar[r] \ar[d] & E_V \ar[r] \ar[d] & \OO_V\left( d - \frac{1}{2} \right) \ar[r] \ar@{=}[d] & 0 \\
0 \ar[r] & \OO(-1)^{\oplus 2} \ar[r] & E \ar[r] & \OO_V\left( d - \frac{1}{2} \right) \ar[r] & 0.
}}

Here the top row is induced by the natural morphism
\[
\OO_V\left( d - \frac{1}{2} \right) \to \OO(-1)[1] \otimes \Hom\left(\OO_V\left( d - \frac{1}{2} \right), \OO(-1)[1] \right)^{\vee}.
\]
We will globalize this diagram to obtain a family. We can compute
\begin{align*}
	\Hom(A, A) &= \Hom\left(p^* A, \RHom(\OO_{\VV} \otimes q^* \OO(d - \tfrac{1}{2}), q^* \OO(-1))[1] \right) \\
	&= \Hom\left(p^* A \otimes \OO_{\VV} \otimes q^* \OO\left( d - \frac{1}{2} \right), q^* \OO(-1)[1] \right) \\
	&= \Hom\left(\OO_{\VV} \otimes q^* \OO\left( d - \frac{1}{2} \right), q^* \OO(-1) \otimes p^* A^{\vee}[1] \right).
\end{align*}

Choosing the identity in this group leads to an extension
\[
0 \to q^* \OO(-1) \otimes p^* A^{\vee} \to \WW \to \OO_{\VV} \otimes q^* \OO\left( d - \frac{1}{2} \right) \to 0,
\]
whose restriction to each plane $V \subset \P^3$ is $E_V$. Let $\tilde{p}: \Gr(A^{\vee}, 2) \times \P^3 \to \Gr(A^{\vee}, 2)$ be the first projection and let $\tilde{q}: \Gr(A^{\vee}, 2) \times \P^3 \to \P^3$ be the second projection. We get a commutative diagram with exact rows:

\centerline{
\xymatrix{
0 \ar[r] & \tilde{q}^* \OO(-1) \otimes \tilde{p}^* \pi^* A^{\vee} \ar[r] \ar[d] & \WW \ar[r] \ar[d] & (\pi \times \id)^* \left( \OO_{\VV} \otimes q^* \OO\left( d - \frac{1}{2} \right) \right) \ar[r] \ar@{=}[d] & 0 \\
0 \ar[r] & \tilde{q}^* \OO(-1) \otimes \tilde{p}^* \QQ \ar[r] & \UU \ar[r] & (\pi \times \id)^* \left( \OO_{\VV} \otimes q^* \OO\left( d - \frac{1}{2} \right) \right) \ar[r] & 0.
}}

Here $\UU$ is a family of stable objects with Chern character $(2,-1,d,\tfrac{d^2}{2} - d + \tfrac{5}{24})$ living in $\Gr(A^{\vee}, 2)$ that induces a bijective morphism $\Gr(A^{\vee}, 2)) \to  M(2,-1,d,\tfrac{d^2}{2} - d + \tfrac{5}{24})$. Since we are in characteristic zero, the following lemma will finish the argument.

\begin{lem}
The moduli space $M(2,-1,d,\tfrac{d^2}{2} - d + \tfrac{5}{24})$ is smooth.
\end{lem}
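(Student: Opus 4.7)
The plan is to show that the Zariski tangent space $T_{[E]} M = \Ext^1(E, E)$ has dimension $2n - 1$ at every closed point $[E] \in M$. The morphism $\pi \colon \Gr(A^{\vee}, 2) \to M$ constructed just above is bijective and proper (both source and target are of finite type over $\C$, with $\Gr(A^{\vee},2)$ proper), hence finite and surjective, so $\dim M = \dim \Gr(A^{\vee},2) = 2n - 1$. Matching the tangent-space dimension with the scheme dimension at every closed point then yields regularity of $M$, and in characteristic zero the bijective morphism $\pi$ is automatically an isomorphism.

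The computation of $\Ext^1(E, E)$ proceeds via the same three-functor strategy used in Lemma \ref{lem:2-1-1/2_moduli}: apply $\RHom(\cdot, \OO(-1))$, $\RHom(\cdot, \OO_V(d - \tfrac{1}{2}))$, and $\RHom(E, \cdot)$ in turn to the destabilizing sequence
\[
0 \to \OO(-1)^{\oplus 2} \to E \to \OO_V\left(d - \tfrac{1}{2}\right) \to 0.
\]
The required $\Ext$-groups between the outer terms reduce, via the Koszul resolution $0 \to \OO(-1) \to \OO \to \OO_V \to 0$ and Serre duality on $\P^3$, to cohomology of line bundles on $V \cong \P^2$; in particular one recovers $\dim \Ext^1(\OO_V(d - \tfrac{1}{2}), \OO(-1)) = n$. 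The crucial input is that the connecting homomorphism $\Hom(E, \OO_V(d - \tfrac{1}{2})) \to \Ext^1(E, \OO(-1)^{\oplus 2})$ vanishes: the one-dimensional source is generated by the quotient map of the defining triangle, whose composition with the subsequent arrow is zero by the triangle axiom. A short diagram chase then gives $\ext^1(E, E) = 2(n-2) + 3 = 2n - 1$, as required.

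The main obstacle — and the reason this argument is more delicate than in the previously treated cases — is that, in contrast to the $d = -\tfrac{1}{2}$ case and the $c = 0$, $d = -3$ case, the obstruction space $\Ext^2(E, E)$ does not vanish here in general. The analogous diagram chase yields $\Ext^2(E, E) \cong H^2(\OO_V(d + \tfrac{1}{2}))^{\oplus 2}$, which is nonzero once $d \leq -\tfrac{7}{2}$. Consequently smoothness cannot be deduced by the usual vanishing-of-obstructions argument. Instead it is extracted from the coincidence $\ext^1(E, E) = \dim M$, which implicitly forces the a priori nontrivial obstruction map into $\Ext^2(E, E)$ to vanish identically.
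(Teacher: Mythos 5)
Your argument is correct and is essentially the paper's own proof: the paper likewise applies $\RHom(\cdot,\OO(-1))$, $\RHom(\cdot,\OO_V(d-\tfrac{1}{2}))$, and $\RHom(E,\cdot)$ to the destabilizing sequence to obtain $\ext^1(E,E) = 2\ext^1(\OO_V(d-\tfrac{1}{2}),\OO(-1)) - 1 = \dim \Gr(A^{\vee},2)$ and concludes smoothness from the bijective morphism in characteristic zero. Your additional observation that $\Ext^2(E,E) \cong H^2(\OO_V(d+\tfrac{1}{2}))^{\oplus 2}$ is generally nonzero is accurate and correctly identifies why the dimension count, rather than obstruction vanishing, is the right mechanism here.
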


\begin{proof}
Applying the three functors $\RHom( \cdot , \OO(-1))$, $\RHom( \cdot, \OO_V(d - \tfrac{1}{2}))$, and $\RHom(E, \cdot)$ to
\[
0 \to \OO(-1)^{\oplus 2} \to E \to \OO_V\left(d - \frac{1}{2}\right) \to 0
\]
leads to
\[
\ext^1(E, E) = 2 \ext^1(\OO_V(d - \tfrac{1}{2}), \OO(-1)) - 1 = \dim \Gr(A^{\vee}, 2). \qedhere
\]
\end{proof}

\subsection{The general case with $c = 0$}

\begin{lem}
\label{lem:c0_general_ext}
Let $F \in M(2,-1,-\tfrac{1}{2},\tfrac{5}{6})$, $V \subset \P^3$, and $d \leq -4$. Then
\begin{align*}
    \ext^i(F,F) &= \begin{cases}
    1 &\text{, if $i = 0$} \\
    3 &\text{, if $i = 1$} \\
    0 &\text{, otherwise}
    \end{cases}, \\
    \ext^i(\OO_V(-2), \OO_V(-2)) &= \begin{cases}
    1 &\text{, if $i = 0$} \\
    3 &\text{, if $i = 1$} \\
    0 &\text{, otherwise}
    \end{cases}, \\
    \ext^i(F, \OO_V(d + 1)) &= \begin{cases}
    (d+4)(d+2) &\text{, if $i = 2$} \\
    0 &\text{, otherwise}
    \end{cases}, \\
    \ext^i(\OO_V(d + 1), F) &= \begin{cases}
    d(d-2) &\text{, if $i = 1$} \\
    0 &\text{, otherwise}
    \end{cases}.
\end{align*}
\end{lem}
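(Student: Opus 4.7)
The plan is to reduce all four Ext computations to cohomology of line bundles on $\P^3$ and on $V \cong \P^2$ via two fundamental resolutions. By Lemma \ref{lem:2-1-1/2_moduli} one has
\[
0 \to \OO(-2) \to \OO(-1)^{\oplus 3} \to F \to 0,
\]
where the first map is given by three linearly independent linear forms $l_1, l_2, l_3 \in H^0(\OO(1))$, and $\OO_V$ has the Koszul resolution $0 \to \OO(-1) \to \OO \to \OO_V \to 0$ coming from the defining equation $l$ of $V$. The computations of $\ext^i(F,F)$ and $\ext^i(\OO_V(-2),\OO_V(-2))$ do not involve $d$ and are identical to those in Lemma \ref{lem:ext_special_case_d3}; I would simply re-derive them by applying $\RHom(-,\OO(k))$ to the resolution of $F$ for $k = -1, -2$ and then $\RHom(F,-)$ to the same resolution, noting for $\OO_V(-2)$ that the connecting map $H^0(\OO_V) \to H^0(\OO_V(1))$ is multiplication by $l|_V = 0$.

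For $\ext^i(F, \OO_V(d+1))$ I would apply $\RHom(-, \OO_V(d+1))$ to the resolution of $F$; the building blocks are $H^j(\OO_V(d+2))$ and $H^j(\OO_V(d+3))$ on $V \cong \P^2$. For $d \leq -4$ both $H^0$ and $H^1$ of these line bundles vanish, forcing $\ext^0 = \ext^1 = 0$ immediately. To prove $\ext^3 = 0$ I would use Serre duality on $V$ to identify the remaining connecting map with the dual of the multiplication map
\[
H^0(\OO_V(-d-6)) \to H^0(\OO_V(-d-5))^{\oplus 3}
\]
defined by $(l_1|_V, l_2|_V, l_3|_V)$. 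This map is injective because $l_1, l_2, l_3$ span a three-dimensional subspace of $H^0(\P^3, \OO(1))$, so their common zero locus is at most a point, hence at least one $l_i|_V$ is nonzero and multiplication by it is injective on sections of a line bundle on the integral surface $V$. Then $\ext^2 = \chi = 3\chi(\OO_V(d+2)) - \chi(\OO_V(d+3))$ collapses after a short calculation to $(d+4)(d+2)$.

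For $\ext^i(\OO_V(d+1), F)$ I would apply $\RHom(-,F)$ to the twisted Koszul sequence $0 \to \OO(d) \to \OO(d+1) \to \OO_V(d+1) \to 0$, reducing the computation to $H^\bullet(F(-d-1))$ and $H^\bullet(F(-d))$. Twisting the resolution of $F$ by $m \geq 3$ shows that $F(m)$ has vanishing higher cohomology and
\[
h^0(F(m)) = 3\binom{m+2}{3} - \binom{m+1}{3},
\]
which immediately kills $\ext^i(\OO_V(d+1), F)$ for $i \geq 2$. The vanishing of $\ext^0$ then follows since multiplication by $l$ is injective on the torsion-free sheaf $F(-d-1)$, hence on its global sections. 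Finally $\ext^1$ equals $h^0(F(-d)) - h^0(F(-d-1))$, and a short binomial computation using Pascal's identity simplifies this to $d(d-2)$.

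The main technical obstacle is the vanishing $\ext^3(F, \OO_V(d+1)) = 0$: one must correctly identify the connecting morphism as a multiplication map and then invoke the geometric fact that three linearly independent linear forms on $\P^3$ cannot all vanish on a single plane. Every other step is routine bookkeeping with long exact sequences and Euler characteristics.
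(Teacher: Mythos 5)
Your proposal is correct and follows essentially the same route as the paper: resolve $F$ by $0 \to \OO(-2) \to \OO(-1)^{\oplus 3} \to F \to 0$, use the Koszul resolution of $\OO_V$, and run the long exact sequences, with the linear independence of the three linear forms (which the paper explicitly records for exactly this purpose) supplying the key injectivity/surjectivity needed to kill $\ext^3(F,\OO_V(d+1))$. The paper compresses all of this into ``a straightforward computation involving the appropriate long exact sequences''; your write-up just makes those steps explicit, and the numerical answers check out.
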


\begin{proof}
By Theorem \ref{thm:sequences_maximal_ch3} and Lemma \ref{lem:2-1-1/2_moduli} there is an exact sequence of sheaves
\[
0 \to \OO(-2) \to \OO(-1)^{\oplus 3} \to F \to 0,
\]
where the three linear polynomials defining the first map are linearly independent. The sequence
\[
0 \to \OO(-1) \to \OO \to \OO_V \to 0
\]
shows that the derived dual of $\OO_V$ is given by $\OO_V(1)$. From here the statement is a straightforward computation involving the appropriate long exact sequences.
\end{proof}

\begin{lem}
Let $d \leq -4$, $V \subset \P^3$ be a plane, and $F \in M(2,-1,-\tfrac{1}{2},\tfrac{5}{6})$. Giving a $2$-Gieseker-stable sheaf $E$ that can be written as an extension
\[
0 \to F \to E \to \OO_V(d + 1) \to 0
\]
is equivalent to giving a line in $\Ext^1(\OO_V(d+1), F)$.
\end{lem}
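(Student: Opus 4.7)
The plan is to adapt the argument given above for the analogous $c=-1$ case, the crucial simplification being that the rank-two subobject $F$ is now stable with $\Aut(F)=\C^\ast$, which is what converts ``two-dimensional subspaces'' into ``lines'' in $\Ext^1$.

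First, giving an extension of the stated shape is the same as giving an extension class $\xi\in\Ext^1(\OO_V(d+1),F)$, and two classes produce isomorphic sheaves $E$ exactly when they agree up to the scaling action of $\Aut(F)\times\Aut(\OO_V(d+1))=\C^\ast\times\C^\ast$. So the isomorphism classes of such $E$ correspond to the origin together with the lines in $\Ext^1(\OO_V(d+1),F)$. The origin yields the split object $F\oplus\OO_V(d+1)$, which is manifestly not $2$-Gieseker-stable. Conversely, by Theorem \ref{thm:sequences_maximal_ch3} any $2$-Gieseker-stable $E$ admitting such an extension has $0\to F\to E\to\OO_V(d+1)\to 0$ as its Harder--Narasimhan filtration below the destabilizing wall $W$, so $F$, $V$, and the line spanned by $\xi$ are uniquely recovered from $E$.

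It then remains to show that every nonzero $\xi$ produces a $2$-Gieseker-stable sheaf. By Proposition \ref{prop:large_volume_limit}, this reduces to showing $\nu_{\alpha,\beta}$-stability of $E$ just outside $W$. By the JH-property asserted in Theorem \ref{thm:sequences_maximal_ch3}, on $W$ the Jordan--H\"older factors of $E$ are precisely $F$ and $\OO_V(d+1)$, each with multiplicity one. Hence any destabilizer just above $W$ must be $F$, $\OO_V(d+1)$, or a nontrivial extension of the two. The canonical $F\hookrightarrow E$ and $E\twoheadrightarrow\OO_V(d+1)$ do not destabilize above $W$ for numerical reasons, since the tilt-slope of the rank-zero factor $\OO_V(d+1)$ does not depend on $\alpha$. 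For the other candidates I would apply $\Hom(-,F)$ and $\Hom(\OO_V(d+1),-)$ to the defining sequence: the connecting maps send $\id_F$ and $\id_{\OO_V(d+1)}$ to $\xi$, so using $\xi\neq 0$ together with the vanishing $\Hom(\OO_V(d+1),F)=0$ from Lemma \ref{lem:c0_general_ext} I obtain $\Hom(E,F)=0=\Hom(\OO_V(d+1),E)$. This rules out $F$ as a quotient and $\OO_V(d+1)$ as a subobject, and the same vanishings forbid any nontrivial extension of the two factors from destabilizing $E$.

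The main delicate point is transferring these $\Hom$-vanishings in $\Coh(\P^3)$ into the tilted heart $\Coh^\beta(\P^3)$, where the relevant destabilizers really live. I would handle this by choosing $\beta\ll 0$ so that both $F$ and $\OO_V(d+1)$ lie in the torsion part $\TT_\beta$ of the tilt and maps between these objects in $\Coh^\beta(\P^3)$ agree with ordinary sheaf maps.
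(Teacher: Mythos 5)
Your argument is correct and follows the paper's own proof of this lemma: the forward direction via uniqueness of the Harder--Narasimhan factors once $E$ becomes tilt-unstable, and the converse via the JH-property from Theorem \ref{thm:sequences_maximal_ch3}, the numerical behaviour of the two factors just above $W$, and the non-splitting of the extension (the paper invokes non-splitting directly where you phrase it as $\Hom(\OO_V(d+1),E)=0$, which indeed follows from $\xi\neq 0$ and Lemma \ref{lem:c0_general_ext}). Your closing worry is vacuous --- for objects of any heart of a bounded t-structure in $\Db(\P^3)$, morphisms in the heart are morphisms in $\Db(\P^3)$, which for sheaves are ordinary sheaf maps --- and the proposed fix of taking $\beta\ll 0$ would not be available anyway, since $(\alpha,\beta)$ must stay on or just above the wall $W$.
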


\begin{proof}
Any extension
\[
0 \to F \to E \to \OO_V(d + 1) \to 0
\]
corresponds to an element in $\Ext^1(\OO_V(d+1), F)$. In Theorem \ref{thm:sequences_maximal_ch3} we have already shown that for semistable objects this is the Harder-Narasimhan filtration of $E$ once $E$ becomes tilt-unstable. The Harder-Narasimhan factors are unique. This means $E$ determines both $V$ and $F$ as a subobject of $E$. Scaling the map $F \to E$ does not change the isomorphism class of $E$. Moreover, if $E$ was a direct sum, it would not be stable. Therefore, we get a line in $\Ext^1(\OO_V(d+1), F)$.

Assume vice versa that we have a line in $\Ext^1(\OO_V(d+1), F)$. Choosing an arbitrary non-zero element on this line leads to a non-trivial extension
\[
0 \to F \to E \to \OO_V(d+1) \to 0.
\]
This object $E$ is strictly semistable along the induced wall $W$. By Theorem \ref{thm:sequences_maximal_ch3} $E$ satisfies the JH-property, and the only relevant destabilizing subobjects of $E$ above the wall could be either $F$ or $\OO_V(d+1)$. However, $F$ does not destabilize $E$ for purely numerical reasons, and the fact that the exact sequence does not split excludes $\OO_V(d+1)$.
\end{proof}

Next we have to construct the variety that we expect to be our moduli space. Let $\WW$ be the universal family on $M(2,-1,-\tfrac{1}{2},\tfrac{5}{6}) \cong \P^3$, and let $\VV \subset \Gr(3,4) \times \P^3 \cong \P^3 \times \P^3$ be the universal plane. We have projections
\begin{align*}
    p_{12} : \Gr(3,4) \times M(2,-1,-\tfrac{1}{2},\tfrac{5}{6}) \times \P^3 &\to \Gr(3,4) \times M(2,-1,-\tfrac{1}{2},\tfrac{5}{6}), \\
    p_{13} : \Gr(3,4) \times M(2,-1,-\tfrac{1}{2},\tfrac{5}{6}) \times \P^3 &\to \Gr(3,4) \times \P^3, \\
    p_{23} : \Gr(3,4) \times M(2,-1,-\tfrac{1}{2},\tfrac{5}{6}) \times \P^3 &\to M(2,-1,-\tfrac{1}{2},\tfrac{5}{6}) \times \P^3, \\
    p_3: \Gr(3,4) \times M(2,-1,-\tfrac{1}{2},\tfrac{5}{6}) \times \P^3 &\to \P^3.
\end{align*}
By Theorem \ref{thm:lange_base_change} and Lemma \ref{lem:c0_general_ext} we get that
\[
A := \lExt^1_{p_{12}}(p_{13}^* \OO_{\VV} \otimes p_3^* \OO(d + 1), p_{23}^* \WW) \cong Rp_{12*} \RHom(p_{13}^* \OO_{\VV} \otimes p_3^* \OO(d + 1), p_{23}^* \WW)[1]
\]
is a vector bundle. Let $\pi: \P(A^{\vee}) \to \Gr(3, 4) \times M(2,-1,-\tfrac{1}{2},\tfrac{5}{6})$ be the projection from the projective bundle of locally free rank one quotients of $A^{\vee}$ to its base. Furthermore, we have a relatively ample line bundle $\OO_{\pi}(1)$ on this projective bundle and a projection $q: \P(A^{\vee}) \times \P^3 \to \P(A^{\vee})$. By \cite[Corollary 4.5]{Lan83:families_extensions} there is an extension
\[
0 \to (\pi \times \id)^* p_{23}^* \WW \otimes q^* \OO_{\pi}(1) \to \UU \to (\pi \times \id)^* (p_{13}^* \OO_{\VV} \otimes p_3^* \OO(d + 1)) \to 0,
\]
such that the fibers of $\UU$ are in bijection with non-trivial extensions
\[
0 \to F \to E \to \OO_V(d+1) \to 0
\]
as previously. This family satisfies a universal property on the category of noetherian $(\P^3)^{\vee} \times M(2,-1,-\tfrac{1}{2},\tfrac{5}{6})$-schemes, but this is not the universal property we need on the category of noetherian $\C$-schemes. Regardless, the universal property of $M(2,0,-d, \tfrac{d^2}{2} + \tfrac{d}{2} + 1)$ implies that there is a bijective morphism $\P(A^{\vee}) \to M(2,0,d, \tfrac{d^2}{2} + \tfrac{d}{2} + 1)$. We are done if we can show that $M(2,0,d, \tfrac{d^2}{2} + \tfrac{d}{2} + 1)$ is smooth.

\begin{lem}
The moduli space $M(2,0,d, \tfrac{d^2}{2} + \tfrac{d}{2} + 1)$ is smooth.
\end{lem}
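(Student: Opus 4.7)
The strategy is to compute $\ext^1(E,E)$ directly via long exact sequences from the destabilizing sequence $0 \to F \to E \to \OO_V(d+1) \to 0$, show that it equals $\dim \P(A^{\vee}) = d(d-2) + 5$, and then conclude smoothness using the bijective morphism from $\P(A^{\vee})$ constructed above. An argument via $\Ext^2(E,E) = 0$, as used for $d \in \{-3,-4\}$, cannot work here: Riemann--Roch gives $\chi(E,E) = 4 + 8d$, and stability together with Serre duality force $\ext^2(E,E) = (d+2)(d+4)$, which is strictly positive for $d \leq -5$.

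First I would apply $\RHom(\,\cdot\,, F)$ to the destabilizing sequence. By Lemma \ref{lem:c0_general_ext}, the connecting map $\Hom(F,F) \to \Ext^1(\OO_V(d+1), F)$ sends $\id_F$ to the nontrivial extension class $\epsilon$ and is thus injective. The long exact sequence yields $\ext^0(E,F) = 0$, $\ext^1(E,F) = d(d-2) + 2$, and $\ext^i(E,F) = 0$ for $i \geq 2$. Next apply $\RHom(\,\cdot\,, \OO_V(d+1))$: the same computation underlying Lemma \ref{lem:c0_general_ext} shows $\ext^i(\OO_V(d+1), \OO_V(d+1)) = 1, 3, 0, 0$ for $i = 0,1,2,3$, and combining with the vanishings in Lemma \ref{lem:c0_general_ext} one obtains $\ext^0(E, \OO_V(d+1)) = 1$ and $\ext^1(E, \OO_V(d+1)) = 3$.

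The key step is the application of $\RHom(E,\,\cdot\,)$: the connecting map $\delta\colon \Hom(E, \OO_V(d+1)) \to \Ext^1(E,F)$ must vanish. Since $\Hom(E, \OO_V(d+1))$ is one-dimensional, generated by the quotient map $\pi$, it suffices to check $\delta(\pi) = 0$. The class $\delta(\pi) = \pi^{*}\epsilon$ represents the pullback extension $0 \to F \to E \times_{\OO_V(d+1)} E \to E \to 0$, which is split by the diagonal $E \to E \times_{\OO_V(d+1)} E$. The long exact sequence thus collapses to
\[
0 \to \Ext^1(E,F) \to \Ext^1(E,E) \to \Ext^1(E, \OO_V(d+1)) \to 0,
\]
yielding $\ext^1(E,E) = d(d-2) + 5$.

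Finally, since the bijective morphism $\P(A^{\vee}) \to M$ has an irreducible, smooth source of dimension $d(d-2) + 5$, the moduli space $M$ is irreducible and the local ring $\mathcal{O}_{M,[E]}$ has Krull dimension $d(d-2) + 5$ at every closed point. Together with $\dim T_{[E]} M = \ext^1(E,E) = d(d-2) + 5$, this forces $\mathcal{O}_{M,[E]}$ to be regular, so $M$ is smooth. The main obstacle is the vanishing of $\delta$: without the splitting-via-diagonal observation the tangent dimension count would come out as $d(d-2) + 4$, off by one, and the nonvanishing of $\Ext^2(E,E)$ for $d \leq -5$ blocks the more direct approach used earlier in the paper.
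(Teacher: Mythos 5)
Your proof is correct and follows the same route as the paper: apply the three functors $\RHom(\,\cdot\,,F)$, $\RHom(\,\cdot\,,\OO_V(d+1))$, and $\RHom(E,\,\cdot\,)$ to the destabilizing sequence to get $\ext^1(E,E)=d(d-2)+5=\dim\P(A^{\vee})$, and conclude via the bijective morphism from the smooth irreducible $\P(A^{\vee})$. You have simply made explicit the details the paper leaves implicit, in particular the vanishing of the connecting map $\Hom(E,\OO_V(d+1))\to\Ext^1(E,F)$ and the observation that $\Ext^2(E,E)\neq 0$ rules out the cruder argument used in the $d=-3$ case.
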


\begin{proof}
Applying the three functors $\RHom( \cdot, F)$, $\RHom( \cdot, \OO_V(d + 1))$, and $\RHom(E, \cdot)$ to
\[
0 \to F \to E \to \OO_V(d + 1) \to 0
\]
leads to
\[
\ext^1(E, E) = d(d-2) + 5 = \dim \P(A^{\vee}). \qedhere
\]
\end{proof}

\def\cprime{$'$} \def\cprime{$'$}

\end{document}